\numberwithin{equation}{section}
\newcommand{\la}{\lambda}
\newcommand{\al}{\alpha}
\newcommand{\be}{\beta}
\newcommand{\ga}{\gamma}
\newcommand{\vp}{\varphi}
\newcommand{\R}{\mathbb{R}}
\newcommand{\N}{\mathbb{N}}
\newcommand{\om}{\omega}
\newcommand{\n}[1]{\Vert #1\Vert }
\newcommand{\bn}[1]{\big \Vert #1 \big \Vert }
\newcommand{\bbn}[1]{\Big\Vert #1 \Big \Vert }
\newcommand{\lr}[1]{\left\{ #1 \right\} }
\newcommand{\lrc}[1]{\left[ #1 \right] }
\newcommand{\lrs}[1]{\left( #1 \right) }
\newcommand{\lra}[1]{\langle #1 \rangle }
\newcommand{\babs}[1]{\big | #1 \big|}
\newcommand{\wt}[1]{\widetilde{#1}}
\newcommand{\pa}{\partial}
\newcommand{\cf}{{\mathcal F}}
\begin{document}

\newtheorem{theorem}{Theorem}[section]
\newtheorem{lemma}[theorem]{Lemma}

\theoremstyle{definition}
\newtheorem{definition}[theorem]{Definition}
\newtheorem{example}[theorem]{Example}
\newtheorem{remark}[theorem]{Remark}

\numberwithin{equation}{section}

\newtheorem{proposition}[theorem]{Proposition}
\newtheorem{corollary}[theorem]{Corollary}
\newtheorem{goal}[theorem]{Goal}
\newtheorem{algorithm}{Algorithm}

\renewcommand{\figurename}{Fig.}

\title[Well/Ill-posedness of Boltzmann with Soft Potential]{Well/Ill-posedness of the Boltzmann Equation with Soft Potential}

\author[X. Chen]{Xuwen Chen}
\address{Department of Mathematics, University of Rochester, Rochester, NY 14627, USA}
\email{xuwenmath@gmail.com}
%\email{chenxuwen@math.umd.edu}

\author[S. Shen]{Shunlin Shen}
\address{School of Mathematical Sciences, Peking University, Beijing, 100871, China}
\email{slshen@pku.edu.cn}

\author[Z. Zhang]{Zhifei Zhang}
\address{School of Mathematical Sciences, Peking University, Beijing, 100871, China}

\email{zfzhang@math.pku.edu.cn}

%%
%%
%%
%%
%%
%\subjclass is required.
\subjclass[2010]{Primary 76P05, 35Q20, 35R25, 35A01; Secondary 35C05, 35B32, 82C40.}
%%
%%
%%\date{}
%%
%%\dedicatory{}
%%
%%%    Abstract is required.
\begin{abstract}
We consider the Boltzmann equation with the soft potential and angular cutoff. Inspired by the methods from dispersive PDEs, we establish its sharp local well-posedness and ill-posedness in $H^{s}$ Sobolev space. We find the well/ill-posedness separation at regularity $s=\frac{d-1}{2}$, strictly $\frac{1}{2}$-derivative higher than the scaling-invariant index $s=\frac{d-2}{2}$, the usually expected separation point.
 \end{abstract}
\keywords{Boltzmann equation, Well/Ill-posedness, Fourier restriction norm space, Soft potential, Maxwellian particles}
\maketitle
\tableofcontents

%\begin{flushleft}
%\tableofcontents
%\end{flushleft}
%\titlecontents{section}
%              [0cm]
%              {}%
%              {\contentslabel{0em}}%
%              {}%
%              {\titlerule*[0.5pc]{}\contentspage\hspace*{0cm}}%
%\titlecontents{subsection}
%              [0cm]
%              {}%
%              {\contentslabel{0em}}%
%              {}%
%              {\titlerule*[0.5pc]{}\contentspage\hspace*{0cm}}%

\section{Introduction}

We consider the Boltzmann equation
\begin{equation}\label{equ:Boltzmann}
\left\{
\begin{aligned}
\left( \partial_t + v \cdot \nabla_x \right) f (t,x,v) =& Q(f,f),\\
f(0,x,v)=& f_{0}(x,v),
\end{aligned}
\right.
\end{equation}
where $f(t,x,v)$ is the distribution function for the particles
at time $t\geq 0$, position $x\in \R^{d}$ and velocity $v\in \R^{d}$. The collision operator $Q$ is conventionally split into a gain term
and a loss term
\begin{equation*}
Q(f,g)=Q^{+}(f,g)-Q^{-}(f,g)
\end{equation*}
where the gain term is
\begin{align}
Q^{+}(f,g)=&\int_{\mathbb{R}^{d}}\int_{\mathbb{S}^{d-1}} f(v^{\ast })g(u^{\ast}) B(u-v,\omega)dud\omega,
\end{align}
and the loss term is
\begin{align}
Q^{-}(f,g)=& \int_{\mathbb{R}^{d}}\int_{\mathbb{S}^{d-1}} f(v)g(u) B(u-v,\omega) du d\omega,
\end{align}
with the relation
between the pre-collision and after-collision velocities that
\begin{align*}
u^{*}=u+\omega\cdot (v-u) \omega,\quad v^{*}=v-\omega\cdot(v-u)\omega.
\end{align*}
The Boltzmann collision kernel function $B(u-v,\omega)$ is a non-negative function depending only on the relative velocity $|u-v|$ and the deviation angle $\theta$ through $\cos \theta:=\frac{u-v}{|u-v|}\cdot \omega$. Throughout the paper, we consider
\begin{align}\label{equ:kernel function}
B(u-v,\omega)=|u-v|^{\ga}\textbf{b}(\cos \theta)
\end{align}
 under the Grad's angular cutoff assumption
\begin{align*}
0\leq  \textbf{b}(\cos \theta)\leq C|\cos\theta|.
\end{align*}
 The different ranges $\ga<0$, $\ga=0$, $\ga>0$ correspond to soft potentials, Maxwellian molecules, and hard potentials, respectively. See also \cite{cercignani1994mathematical,cercignani1988boltzmann,villani2002review} for a more detailed physics background. This collision kernel \eqref{equ:kernel function} comes from an important model case of inverse-power law potentials and there have been a large amount of literature devoted to various problems for this model, such as its hydrodynamics limits which provide a description between the kinetic theory and hydrodynamic equations.
 For a detailed presentation and the derivation of macroscopic equations from the fundamental laws of physics, see for example \cite{saint2009hydrodynamcis}.

The Cauchy problem for the Boltzmann equation is one of the fundamental mathematical problems in kinetic theory, as it is of vital importance for the physical interpretation and practical
application.
For instance, in the absence of uniqueness or continuous dependence on the initial condition, numerical calculations and algorithms, even if they can be done, could present puzzling results.
Despite
the innovative work \cite{diperna1989cauchy,gressman2011global} and many nice developments,
the well/ill-posedness of the Boltzmann equation remains largely open. So far, there have been many developed methods and techniques for well-posedness, see for example \cite{alexandre2011global,alexandre2013local,ampatzoglou2022global,arsenio2011global,duan2017global,duan2016global,
guo2003classical,he2017well,he2023cauchy,sohinger2014boltzmann,toscani1988global}.

In the recent series of paper \cite{chen2019local,chen2019moments,chen2021small}, by taking dispersive techniques on the study of the quantum many-body hierarchy dynamics, especially space-time collapsing/multi-linear estimates techniques (see for instance \cite{chen2015unconditional,
chen2014derivation,chen2013rigorous,chen2016focusing,chen2016collapsing,chen2016klainerman,chen2016correlation,
chen2019derivation,chen2022quantitative,chen2023derivation,herr2016gross,herr2019unconditional,
kirkpatrick2011derivation,klainerman2008uniqueness,sohinger2015rigorous}), T. Chen, Denlinger, and Pavlovi$\acute{c}$ provided a new approach to prove the well-posedness of the Boltzmann equation and suggested the possibility of a systematic study of Boltzmann equation using dispersive tools.
With the dispersive techniques, the regularity index for well-posedness, which is usually at least the continuity threshold $s>\frac{d}{2}$, has been relaxed to $s>\frac{d-1}{2}$ for both Maxwellian molecules and hard potentials with cutoff in \cite{chen2019local}. It is of mathematical and physical interest to prove well-posedness at the optimal regularity.
From the scaling point of view,
 the Boltzmann equation \eqref{equ:Boltzmann} is invariant under the scaling
\begin{align}
f_{\la}(t,x,v)=\la^{\al+(d-1+\ga)\be}f(\la^{\al-\be}t,\la^{\al}x,\la^{\be}v),
\end{align}
for any $\al$, $\be\in \R$ and $\la>0$. Then in the $L^{2}$ setting, it holds that
\begin{align*}
\n{|\nabla_{x}|^{s}|v|^{r}f_{\la}}_{L_{xv}^{2}}=\la^{^{\al+(d-1+\ga)\be}}\la^{\al s-\be r}\la^{-\frac{d}{2}\al-\frac{d}{2}\be}
\n{|\nabla_{x}|^{s}|v|^{r}f}_{L_{xv}^{2}}.
\end{align*}
This gives the scaling-critical index
\begin{align}\label{equ:scaling,L2}
s=\frac{d-2}{2},\quad  r=s+\ga.
\end{align}
From the past experience of scaling analysis,
it is believed that the well/ill-posedness threshold\footnote{Instead of scaling invariance of equation, the critical regularity for the Boltzmann equation is sometimes believed at $s=\frac{d}{2}$ in the sense that the critical embedding $H^{\frac{d}{2}}\hookrightarrow L^{\infty}$ fails, see for example \cite{alexandre2013local,duan2016global,duan2021global,duan2018solution}.} in $H^{s}$ Sobolev space is $s_{c}=\frac{d-2}{2}$ with $r\geq 0$.
 Surprisingly, for the 3D constant kernel case, X. Chen and Holmer in \cite{chen2022well} prove the well/ill-posedness threshold in $H^{s}$ Sobolev space is exactly at regularity $s=\frac{d-1}{2}$, and thus point out the actual optimal regularity for the global well-posedness problem.

On the one hand, while there are many well-known progress such as \cite{christ2003asymptotics,christ2003ill-posedness,kenig1996bilinear,kenig1996quadratic,kenig2001ill-posedness,molinet2001ill-posedness,
molinet2002well-posedness,tzvetkov2006ill-posedness} regarding the study of dispersive equations, the illposedness of the Boltzmann equation remains largely open away from \cite{chen2022well}. One certainly would like to have the sharp problem resolved for the Boltzmann equations. On the other hand,
To initiate a systematic study of a large project including sharp well-posedness, blow-up analysis, regularity criteria, etc, it is of priority to find out the well/ill-posedness separation point.
In the paper, moving forward from the special case \cite{chen2022well}, we investigate the general kernel with soft potentials, for which both the sharp well-posedness and ill-posedeness are open. We settle this problem and provide the well/ill-posedness threshold. With the finding of this optimal regularity index, we deal with the sharp small data global well-posedness in another paper \cite{chen2023sharp}.\footnote{The hard potential case is also interesting and the ill-posedness result remains open. Our approximation solution gives desired bad behaviors for the hard potential. But it needs a totally different work space to generate the exact solution. Hence, we put it for further work.}

We start with the connection between the analysis of \eqref{equ:Boltzmann} and the theory of nonlinear dispersive PDEs.
Let $\wt{f}(t,x,\xi)$ be the inverse Fourier transform in the velocity variable, that is,
\begin{align}
\wt{f}(t,x,\xi)=\mathcal{F}_{v\mapsto \xi}^{-1}(f).
\end{align}
Then the linear part of \eqref{equ:Boltzmann} is changed into the symmetric hyperbolic Schr\"{o}dinger equation
\begin{align}
i\pa_{t}\wt{f}+\nabla_{\xi}\cdot \nabla_{x}\wt{f}=0,
\end{align}
which, in the nonlinear context, enables the application of Strichartz estimates that
\begin{align}
\n{e^{it\nabla_{\xi}\cdot \nabla_{x}}\wt{f}_{0}}_{L_{t}^{q}L_{x\xi}^{p}}\lesssim \n{\wt{f}_{0}}_{L_{x\xi}^{2}},\quad \frac{2}{q}+\frac{2d}{p}=d,\quad q\geq 2,\ d\geq2.
\end{align}

We introduce the Sobolev norms
\begin{align}
\n{\wt{f}}_{H_{x}^{s}H_{\xi}^{r}}=\n{\lra{\nabla_{x}}^{s}\lra{\nabla_{\xi}}^{r}\wt{f}}_{L_{x\xi}^{2}}=\n{\lra{\nabla_{x}}^{s}\lra{v}^{r}f}_{L_{xv}^{2}}
=\n{f}_{L_{v}^{2,r}H_{x}^{s}},
\end{align}
and the Fourier restriction norms (see \cite{beals1983self,bourgain1993fourier1,bourgain1993fourier2,klainerman1993space,rauch1982nonlinear})
\begin{align}\label{equ:fourier restriction norm}
\n{\wt{f}}_{X^{s,r,b}}=\n{\hat{f}(\tau,\eta,v)\lra{\tau+\eta\cdot v}^{b}\lra{\eta}^{s}\lra{v}^{r}}_{L_{\tau,\eta,v}^{2}},
\end{align}
where $\hat{f}(\tau,\eta,v)$ denotes the Fourier transform of $\wt{f}(t,x,\xi)$ in $(t,x,\xi)\mapsto (\tau,\eta,v)$, and is thus the Fourier transform of
$f(t,x,v)$ itself in only $(t,x)\mapsto (\tau,\eta)$, that is,
$$\hat{f}(\tau,\eta,v)=\mathcal{F}(\wt{f})=\mathcal{F}_{(t,x)\mapsto(\tau,\eta)}\lrs{f}.$$
It is customary to define their finite time restrictions via
\begin{align}\label{equ:fourier restriction norm,finite time}
\n{\wt{f}}_{X_{T}^{s,r,b}}=\inf\lr{\n{F}_{X^{s,r,b}}:F|_{[-T,T]}=\wt{f}}.
\end{align}

We recall the definition of well-posedness, see for example \cite{mihaela2023local,tao2006nonlinear}.

\begin{definition}
We say that (\ref{equ:Boltzmann}) is well-posed in  $L_{v}^{2,r}H_{x}^{s}$ if for each $R>0$, there exists a time $T=T(R)>0$, and a set $X$, such that all of the following are satisfied.
\end{definition}
\begin{enumerate}
\item[(a)] (Existence and Uniqueness) For each $f_{0}\in L_{v}^{2,r}H_{x}^{s}$ with $\left\Vert f_{0}\right\Vert _{L_{v}^{2,r}H_{x}^{s}}\leqslant R$,
 there exists a unique solution $f(t,x,v)$ to the integral equation of \eqref{equ:Boltzmann} in
 \begin{align*}
C([-T,T];L_{v}^{2,r}H_{x}^{s})\bigcap X.
\end{align*}
Moreover, $f(t,x,v)\geqslant 0$ if $f_{0}\geqslant 0$.\footnote{If $f_{0}\in L_{x,v}^{1}$, the solution $f(t)$ should also have the $L_{x,v}^{1}$ integrability in terms of the mass conservation law. However, this is not a simple problem. We deal with it in \cite{chen2023sharp} by using regularity criteria which are beyond the scope of this paper.}

\item[(b)] (Uniform Continuity of the Solution Map)\footnote{
One could replace (c) with the Lipschitz continuity which is usually the case as well.}
The map $f_{0}\mapsto f$ is uniform continuous with the $C([-T,T];L_{v}^{2,r}H_{x}^{s})$ norm. Specifically,
 suppose $f$ and $g$ are two solutions to (\ref{equ:Boltzmann}) on $[-T,T],$ $\forall
\varepsilon >0$, $\exists\ \delta (\varepsilon )$ independent of $f$ or $g$
such that
\begin{equation}
\left\Vert f(t)-g(t)\right\Vert _{C([-T,T];{L_{v}^{2,r}H_{x}^{s}}
)}<\varepsilon \text{ provided that }\left\Vert f(0)-g(0)\right\Vert
_{L_{v}^{2,r}H_{x}^{s}}<\delta (\varepsilon ) . \label{eq:uniform continuity}
\end{equation}
\end{enumerate}

We take $X$ to be the Fourier restriction norm space $X_{T}^{s,s+\ga,b}$ defined by \eqref{equ:fourier restriction norm,finite time} with $b\in (\frac{1}{2},1)$.

\begin{theorem}[Main Theorem]\label{thm:main theorem}
Let $d=2,3$.

\begin{enumerate}
\item[$(1)$] For $s>\frac{d-1}{2}$, $\frac{1-d}{2}\leq \ga\leq 0$, \eqref{equ:Boltzmann} is locally well-posed in $L_{v}^{2,s+\ga}H_{x}^{s}$.

\item[$(2)$] For $0 \leq  s_{0}<\frac{d-1}{2}$, $\frac{1-d}{2}\leq \ga\leq 0$, $r_{0}=\max\lr{0,s_{0}+\ga}$,
 \eqref{equ:Boltzmann} is ill-posed in $L_{v}^{2,r_{0}}H_{x}^{s_{0}}$
 in the sense that
 the data-to-solution map is not uniformly continuous.
In particular, for each $M\gg1 $, there exists a time sequence
$\left\{t_{0}^{M}\right\} _{M}$ such that
	$$t_{0}^{M}<0, \quad t_{0}^{M}\nearrow 0,$$
and two solutions $f^{M}(t)$,  $g^{M}(t)$ in $[t_{0}^{M},0]$ with
\begin{align*}
\n{f^{M}(t_{0}^{M})}_{L_{v}^{2,r_{0}}H_{x}^{s_{0}}}\sim \n{g^{M}(t_{0}^{M})}_{L_{v}^{2,r_{0}}H_{x}^{s_{0}}} \sim 1,
\end{align*}
such that they are initially close at $t=t_{0}^{M}$
	\begin{equation*}
		\Vert f^{M}(t_{0}^{M})-g^{M}(t_{0}^{M})\Vert
		_{L_{v}^{2,r_{0}}H_{x}^{s_{0}}}\leqslant \frac{1}{\ln \ln M}\ll 1,
	\end{equation*}
	but become fully separated at $t=0$
	\begin{equation*}
		\Vert f^{M}(0)-g^{M}(0)\Vert _{L_{v}^{2,r_{0}}H_{x}^{s_{0}}}\sim 1.
	\end{equation*}

\end{enumerate}

\end{theorem}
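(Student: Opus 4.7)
The plan is to attack the two halves of Theorem~\ref{thm:main theorem} by rather different tools: a contraction mapping in the Fourier restriction space $X_{T}^{s,s+\ga,b}$ for the well-posedness half, and an explicit family of nearly identical initial data whose solutions separate for the ill-posedness half.

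For part~(1), I would first reformulate the equation on the Fourier side in $v$, writing
\begin{equation*}
\wt{f}(t)=e^{it\nabla_{\xi}\cdot\nabla_{x}}\wt{f}_{0}-i\int_{0}^{t}e^{i(t-s)\nabla_{\xi}\cdot\nabla_{x}}\,\wt{Q(f,f)}(s)\,ds,
\end{equation*}
and reduce local well-posedness to the standard $X^{s,b}$-machinery, namely a linear embedding into $X_{T}^{s,s+\ga,b}$, a Duhamel inhomogeneous estimate, and the critical bilinear bound
\begin{equation*}
\bn{\wt{Q(f,g)}}_{X_{T}^{s,s+\ga,b-1}}\lesssim T^{\theta}\bn{\wt{f}}_{X_{T}^{s,s+\ga,b}}\bn{\wt{g}}_{X_{T}^{s,s+\ga,b}}
\end{equation*}
for some $\theta>0$ and $b$ slightly above $1/2$. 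The bilinear estimate is where all the work sits: on the space--time Fourier side, $Q^{\pm}$ become explicit convolutions involving the pre/post-collision velocities, so I would run a dyadic Littlewood--Paley decomposition in input/output frequencies $\eta$, velocities $v$, and the modulation $\tau+\eta\cdot v$, and invoke Strichartz estimates for the symmetric hyperbolic Schr\"odinger propagator $e^{it\nabla_{\xi}\cdot\nabla_{x}}$ in tandem with the angular cutoff integration in $\omega$. The weight $\lra{v}^{s+\ga}$ is tailored precisely to absorb the soft potential factor $|u-v|^{\ga}$ via $|u-v|^{\ga}\lesssim \lra{u}^{\ga}\vee\lra{v}^{\ga}$, matching the scaling~\eqref{equ:scaling,L2}; the threshold $s>\frac{d-1}{2}$ should emerge from the summability of the dyadic pieces.

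For part~(2), the counterexample will be built from two high-frequency wave packets $F^{M},G^{M}$ concentrated around a fixed momentum of size $|\eta|\sim M^{a}$ in $x$ and velocity of size $|v|\sim M^{b}$, with amplitudes calibrated so that $\n{F^{M}}_{L_{v}^{2,r_{0}}H_{x}^{s_{0}}}\sim \n{G^{M}}_{L_{v}^{2,r_{0}}H_{x}^{s_{0}}}\sim 1$ and $\n{F^{M}-G^{M}}_{L_{v}^{2,r_{0}}H_{x}^{s_{0}}}\sim(\ln\ln M)^{-1}$. I would evolve them backward to $t_{0}^{M}<0$ with $t_{0}^{M}\nearrow 0$ and show that the quadratic Duhamel term produces a separation of order $1$ at $t=0$ as long as $s_{0}<\frac{d-1}{2}$, because below this threshold the nonlinear secular growth exceeds the Sobolev cost of concentrating a high-frequency packet. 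This mimics the constant-kernel analysis of \cite{chen2022well}, with the additional tuning that $a,b$ must balance the $|u-v|^{\ga}$ factor for negative $\ga$. Because such wave packets are only approximate solutions, I would produce the \emph{exact} $f^{M},g^{M}$ by solving \eqref{equ:Boltzmann} with these initial data in a higher-regularity subcritical space (where part~(1) applies) and close a Gronwall-type control of the remainder in that stronger norm, transferring it back to the $L_{v}^{2,r_{0}}H_{x}^{s_{0}}$ scale at the end.

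The step I expect to be the main obstacle is the bilinear estimate of part~(1) in the full range $\frac{1-d}{2}\leq\ga\leq 0$: the singular velocity factor $|u-v|^{\ga}$ couples with the modulation decomposition and tends to create logarithmic borderline cases exactly at $s=\frac{d-1}{2}$, while the gain term's change of variables between $(v,u)$ and $(v^{*},u^{*})$ forces careful tracking of the weight $\lra{v}^{s+\ga}$ along the post-collision manifold. A secondary difficulty, on the ill-posedness side, is verifying that the Duhamel remainder is genuinely of lower order in the $L_{v}^{2,r_{0}}H_{x}^{s_{0}}$-norm throughout $[t_{0}^{M},0]$, so that the quadratic contribution actually drives the separation rather than the leading approximation itself.
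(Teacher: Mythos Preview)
Your overall framework for part~(1)---contraction mapping in $X_T^{s,s+\ga,b}$ via bilinear estimates and Strichartz---matches the paper. However, the key technical point you propose for handling the soft potential is incorrect: the inequality $|u-v|^{\ga}\lesssim\lra{u}^{\ga}\vee\lra{v}^{\ga}$ is \emph{false} for $\ga<0$, since the left side blows up as $u\to v$ while the right side stays bounded. The paper does not absorb $|u-v|^{\ga}$ into velocity weights at all. Instead, after the inverse Fourier transform in $v$, the kernel becomes $|\eta|^{-(d+\ga)}$ on the $\xi$-side, and this singularity is handled by Hardy--Littlewood--Sobolev (Lemmas~\ref{lemma:hardy-littlewood-sobolev inequality,dual form} and~\ref{lemma:endpoint estimate,hls}). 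For the gain term the paper combines this with the convolution-type inequality of Alonso--Carneiro--Gamba (Lemma~\ref{lemma:Q+,bilinear estimate}) and a Littlewood--Paley decomposition in $x$ and $\xi$ only, exploiting the energy conservation $|v|^{2}\le|v^{*}|^{2}+|u^{*}|^{2}$ to constrain output $\xi$-frequencies (see~\eqref{equ:property,constraint,projector,v,variable}); no modulation decomposition is used.

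For part~(2), your proposal departs substantially from the paper's mechanism and, as stated, is too schematic to close. The paper does \emph{not} use secular growth of a quadratic Duhamel term acting on two similar wave packets. Instead it builds an approximation $f_{\mathrm a}=f_{\mathrm r}+f_{\mathrm b}$ where $f_{\mathrm b}$ is a sum of $J\sim(MN_{2})^{d-1}$ narrow beams along equispaced directions $e_{j}$, and $f_{\mathrm r}$ is a localized ``rock'' solving the drift-free linearized loss equation $\partial_t f_{\mathrm r}=-Q^{-}(f_{\mathrm r},f_{\mathrm b})$; the exponential factor $\exp[|t|M^{\frac{d-1}{2}-s}]$ in~\eqref{equ:fr} is what drives the norm deflation. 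The two exact solutions are $f_{\mathrm{ex}}=f_{\mathrm a}+f_{\mathrm c}$ and $g_{\mathrm{ex}}=f_{\mathrm r}(t_{0})+g_{\mathrm c}$, where $f_{\mathrm r}(t_{0})$ is \emph{frozen} in time. Your plan to control the remainder in a higher subcritical Sobolev space also does not match: $f_{\mathrm a}$ is large in $H^{s}$ for any $s>\frac{d-1}{2}$, so the paper introduces a tailored $Z$-norm~\eqref{equ:z-norm} mixing $L_{v}^{2}H_{x}^{1}$, $L_{v}^{1}L_{x}^{\infty}$, and $L_{v}^{5/3}L_{x}^{\infty}$ pieces with $M$- and $N_{2}$-dependent weights, chosen so that both the closed bilinear estimate~\eqref{equ:closed estimate,z norm} holds and $\n{f_{\mathrm a}}_{Z}$ grows only like $M^{\frac{d-1}{2}-s}(\ln\ln M)^{2}$. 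The bulk of the ill-posedness argument is then the error analysis for $Q^{\pm}(f_{\mathrm b},f_{\mathrm b})$ in this $Z$-norm (Sections~\ref{section:Analysis of Q-}--\ref{section:Analysis of Q+}), which requires geometric control of beam--beam interactions; none of this structure is present in your two-wave-packet sketch.
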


Theorem \ref{thm:main theorem} is the main novelty, which finds the well/ill-posedness threshold, by establishing the sharp local well-posedness, and proving the ill-posedness for the soft potential case. There have been many nice work on the well-posedness part by the energy method which requires higher regularity, see for example \cite{guo2003classical,guo2003the,strain2008exponential,alexandre2013local}.
For both Maxwellian molecules and hard potentials, the regularity index $s>\frac{d-1}{2}$ for well-posedness was achieved in \cite{chen2019local} without ill-posedness.
Our well-posedness result solves the remaining soft potential case.

We remark that, as scaling \eqref{equ:scaling,L2} in $L^{2}$ setting gives the restriction that $s+\ga\geq 0$, the range $\frac{1-d}{2}\leq \ga\leq 0$ should be sharp  if one seeks the optimal regularity $s>\frac{d-1}{2}$. In addition, the endpoint case $\ga=-1$ with $d=3$ plays an important role in the derivation of the Boltzmann equation from quantum many-body dynamics in \cite{chen2023derivationboltzmann}, where the collision kernel is composed of part hard sphere and part inverse power potential:
\begin{align}\label{equ:hard,soft,kernel}
B(u-v,\omega)=\lrs{1_{\lr{|u-v|\leq 1}}|u-v|+1_{\lr{|u-v|\geq 1}}|u-v|^{-1}} \textbf{b}(\frac{u-v}{|u-v|}\cdot \omega),
\end{align}
which also provides yet another physical background to our problem here.
Our proof for ill-posedness also works for kernel \eqref{equ:hard,soft,kernel}.

\begin{corollary}\label{lemma:ill-posedness,kernels}
For $d=3$, $0 \leq s_{0}<1$, \eqref{equ:Boltzmann} is ill-posed in $L_{v}^{2}H_{x}^{s_{0}}$ with the kernel \eqref{equ:hard,soft,kernel} in the sense that
 the data-to-solution map is not uniformly continuous.
\end{corollary}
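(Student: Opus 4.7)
The plan is to inherit the ill-posedness construction from Theorem \ref{thm:main theorem}(2) at the parameter choice $d=3$, $\gamma=-1$, for which the weight index $r_{0}=\max\{0,s_{0}-1\}=0$ on $s_{0}\in[0,1)$ already matches the $L_{v}^{2}H_{x}^{s_{0}}$ space appearing in the corollary. Observing that the mixed kernel \eqref{equ:hard,soft,kernel} coincides with the pure soft kernel $B_{\mathrm{soft}}(u-v,\omega)=|u-v|^{-1}\textbf{b}(\cos\theta)$ on the long-range region $\{|u-v|\geq 1\}$ and is pointwise bounded by $\textbf{b}(\cos\theta)$ on the short-range region $\{|u-v|\leq 1\}$, the goal is to run the approximate solutions $f^{M},g^{M}$ produced by Theorem \ref{thm:main theorem}(2) through the modified Boltzmann dynamics and verify that both the closeness at $t=t_{0}^{M}$ and the separation at $t=0$ survive.

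First I would write $Q_{\mathrm{mix}}=Q_{\mathrm{soft}}+R$, where $R$ has kernel $\mathbf{1}_{\{|u-v|\leq 1\}}(|u-v|-|u-v|^{-1})\textbf{b}(\cos\theta)$. This difference decomposes into a bounded Maxwellian-type piece (from $|u-v|\,\mathbf{1}_{\{|u-v|\leq 1\}}\textbf{b}$) and a negative singular-soft piece (from $-\mathbf{1}_{\{|u-v|\leq 1\}}|u-v|^{-1}\textbf{b}$), both controlled by the multilinear $X^{s,r,b}$-type estimates already developed for $\gamma=-1$ in the proof of Theorem \ref{thm:main theorem}(1). Consequently $Q_{\mathrm{mix}}$ satisfies the same trilinear and error estimates as $Q_{\mathrm{soft}}$ in the $X^{s_{0},0,b}_{T}$ framework, so that the amplitude bound $\|f^{M}(t_{0}^{M})\|_{L_{v}^{2}H_{x}^{s_{0}}}\sim 1$ and the initial closeness $\|f^{M}(t_{0}^{M})-g^{M}(t_{0}^{M})\|_{L_{v}^{2}H_{x}^{s_{0}}}\lesssim(\ln\ln M)^{-1}$ transfer verbatim from Theorem \ref{thm:main theorem}(2).

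The main obstacle, and the real content of the argument, is to verify that the lower bound $\|f^{M}(0)-g^{M}(0)\|_{L_{v}^{2}H_{x}^{s_{0}}}\sim 1$ persists under the kernel modification. This reduces to showing that the inflation mechanism of Theorem \ref{thm:main theorem}(2) at $\gamma=-1$ is driven by interactions on the long-range region $\{|u-v|\geq 1\}$, on which $Q_{\mathrm{mix}}$ and $Q_{\mathrm{soft}}$ are identical. Given the expected structure of the construction --- wave packets at high $x$-frequency $N$ with velocity support at scale $O(1)$, whose separation at $t=0$ is produced by resonant interactions in the phase $\tau+\eta\cdot v$ rather than by the $|u-v|\to 0$ singularity of the kernel --- the contribution from $R$ should be $O(1)$ in $M$ and thus strictly subdominant to the $N$-dependent inflation term, closing the argument. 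If the original construction were to exploit the short-range singularity of the soft kernel, an alternate route would be to velocity-truncate $f^{M},g^{M}$ into a regime where all effective collisions satisfy $|u-v|\geq 1$, so that $R$ vanishes identically against the truncated data, and then to check that this truncation preserves both the initial closeness at $t_{0}^{M}$ and the final separation at $t=0$.
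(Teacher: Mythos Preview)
Your central intuition---that the deflation mechanism depends only on the long-range region $\{|u-v|\geq 1\}$, where the mixed kernel and the pure $\gamma=-1$ kernel agree---is correct and is precisely the content of Remark~\ref{remark:lower bound,kernels}. However, the route you propose for the upper bounds does not work, and your picture of the underlying construction is off in ways that matter.

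The fatal gap is your appeal to ``the multilinear $X^{s,r,b}$-type estimates already developed for $\gamma=-1$ in the proof of Theorem~\ref{thm:main theorem}(1)'' to control $R$ in the $X^{s_0,0,b}_T$ framework. Those bilinear estimates are proved only for $s>\tfrac{d-1}{2}$; at $s_0<\tfrac{d-1}{2}$ they are simply unavailable---indeed, their failure is the content of the ill-posedness. The actual construction does not use $X^{s,r,b}$ at all: it builds an approximate solution $f_{\mathrm a}=f_{\mathrm r}+f_{\mathrm b}$ explicitly, and closes a contraction for the correction $f_{\mathrm c}$ in the tailored $Z$-norm~\eqref{equ:z-norm}. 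Relatedly, the mechanism is not ``resonant interactions in the phase $\tau+\eta\cdot v$'' with velocity support at scale $O(1)$. The deflation is driven by the explicit ODE solution $f_{\mathrm r}=e^{-\beta}f_{\mathrm r}(0)$, where $\beta=\int_0^t\!\int |u-v|^{\gamma}f_{\mathrm b}\,du\,dt_0$ grows like $|t|M^{1-s}$; here $f_{\mathrm r}$ has velocity support at scale $N_1^{-1}\ll 1$ and $f_{\mathrm b}$ at scale $N_2\gg 1$, which is exactly why $|u-v|\sim N_2\gg 1$ in the lower bound and the short-range piece of the mixed kernel drops out.

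The paper's argument is much shorter than your decomposition $Q_{\mathrm{mix}}=Q_{\mathrm{soft}}+R$. One simply observes the pointwise bound
\[
\bigl(1_{\{|u-v|\leq 1\}}|u-v|+1_{\{|u-v|\geq 1\}}|u-v|^{-1}\bigr)\,\textbf{b}\ \leq\ |u-v|^{-1}\,\textbf{b},
\]
so every \emph{upper} bound in the $(d,\gamma,r_0)=(3,-1,0)$ proof---the $Z$-norm bounds on $f_{\mathrm b}$, $f_{\mathrm r}$, $f_{\mathrm a}$, the error bounds on $F_{\mathrm{err}}$, and the closed bilinear $Z$-estimate of Lemma~\ref{lemma:binlinear estimate}---holds verbatim for the mixed kernel. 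The single \emph{lower} bound needed, namely~\eqref{equ:critical norm estimate for fr,lower bound} via~\eqref{equ:upper lower bound,fr}, involves only $|u-v|\sim N_2\gg 1$, so the $1_{\{|u-v|\leq 1\}}|u-v|$ term vanishes and the computation is unchanged. With these two observations the entire proof of ill-posedness in Theorem~\ref{thm:main theorem} for $(3,-1,0)$ transfers to the mixed kernel without any new estimate.
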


\subsection{Outline of the Paper}
In Section \ref{section:Well-posedness}, we prove the well-posedeness of \eqref{equ:Boltzmann}. The bilinear estimates for gain/loss terms are the key step to conclude the well-posedness and the proof highly relies on the techniques from dispersive PDEs.

In Section \ref{section:Bilinear Estimate for Loss Term}, we appeal to dispersive estimates to prove the loss term bilinear estimate. This can be directly handled because of the factorization of the kernel.
In Section \ref{section:Bilinear Estimate for Gain Term}, we deal with the gain term, which requires a more subtle analysis due to the complicated partial
convolution structure.
One important observation is that the energy conservation provides a lower bound estimate for after-collision velocities, which enables the application of the Littlewood-Paley theory and frequency analysis techniques in multi-linear estimates.
Then with a convolution type estimate in \cite{alonso2010estimates}, we are able to establish the gain term bilinear estimate with the help of Strichartz estimates in the Fourier restriction norm space.
 Finally, in Section \ref{section:Proof of Well-posedness}, we complete the proof of well-posedness after our built-up $X^{s,r,b}$ spaces and its related frequency analysis in this context.

In Section \ref{section:Ill-posedness}, we prove the ill-posedness of \eqref{equ:Boltzmann}. The idea is to construct an approximation solution which has the norm deflation property and then perturb it into an exact solution. We improvise and sharpen the prototype approximation solution found in \cite{chen2022well}. To overcome the singularities carried by the soft potentials, which were known to be the main difficulties, we introduce a new scaling on the approximation solution, create an elaborate $Z$-norm, which is used to prove a closed estimate for the gain and loss terms, that is,
\begin{equation}\label{equ:closed estimate,z norm}
\|Q^\pm (f_1,f_2) \|_{Z} \lesssim \|f_1\|_{Z} \|f_2\|_{Z},
\end{equation}
and conclude the existence of small corrections.
 With this new treatment, the extra restriction that $s_{0}>\frac{1}{2}$ in \cite{chen2022well} can now be removed.

 In Section \ref{equ:Bounds on the approximation solution}, we first construct the approximation solution $f_{\textrm{a}}$ and prove its norm deflation.
Then in Section \ref{section:Discussion on the $L^{1}$-based space and hard potentials}, we give a discussion on the $L^{1}$-based spaces and the hard potentials case, for which our approximation solution also gives desired bad behaviors.\footnote{It then provides a formal answer to a question raised by K. Nakanishi.} Therefore, a similar mechanism of norm deflation in different settings is possible and deserves further investigations.

In Sections \ref{section:$Z$-norm Bounds on the Approximation Solution}--\ref{section:Bounds on the Correction Term}, we introduce the $Z$-norm space and perform a perturbation argument to turn the approximation solution into the exact solution. In Section \ref{section:$Z$-norm Bounds on the Approximation Solution}, we first provide the $Z$-norm bounds on the approximation solution. Then in Section \ref{section:Bounds on the Error Terms}, we deal with the error terms and prove the $Z$-norm error estimates. Proving the error estimates, as it includes a large quantity of error terms involving singularities at which we need geometric techniques on the nonlinear interactions between frequencies, is the most intricate part which we treat in Sections \ref{section:Analysis of term1}--\ref{section:Analysis of Q+},.
After dealing with the error terms,
 we prove that there is an exact solution which is mostly $f_{\textrm{a}}$ in Section \ref{section:Bounds on the Correction Term},
and thus conclude the ill-posedness result in Section \ref{section:Proof of Illposedness}.

After the proof of the main theorem, we put and review some tools in Appendix \ref{section:Sobolev-type and Time-independent Bilinear Estimates} and the Strichartz estimates in Appendix \ref{section:Strichartz Estimates}.~\\

\noindent \textbf{Acknowledgements} The authors would like to thank professors Yan Guo and Tong Yang for helpful discussions about this work. X. Chen was supported in part by NSF grant DMS-2005469 and a Simons fellowship numbered 916862, S. Shen was supported in part by the Postdoctoral Science Foundation of China under Grant 2022M720263, and Z. Zhang was supported in part by NSF of China under Grant 12171010 and 12288101.
\section{Well-posedness}\label{section:Well-posedness}

To conclude the well-posedeness of \eqref{equ:Boltzmann}, it suffices to prove the following bilinear estimates
\begin{align}\label{equ:lwp,bilinear estimate}
\n{\lra{\nabla_{x}}^{s}\lra{v}^{s+\ga}Q^{\pm}(f,g)}_{L_{t}^{2}L_{x,v}^{2}}\lesssim \n{\wt{f}}_{X^{s,s+\ga,b}}\n{\wt{g}}_{X^{s,s+\ga,b}}.
\end{align}
Note that no $v$-variable Fourier transform of the collision kernel in \eqref{equ:lwp,bilinear estimate} is needed if we fully work in the $X^{s,s+\ga,b}$ space. Here, we will work on the $(x,\xi)$ side and prove \eqref{equ:lwp,bilinear estimate} by use of the Fourier transform of the kernel.

Taking the inverse $v$-variable Fourier transform on both side of \eqref{equ:Boltzmann}, we get
\begin{align}
i\pa_{t}\wt{f}+\nabla_{\xi}\cdot \nabla_{x}\wt{f}=i\mathcal{F}_{v\mapsto \xi}^{-1}\lrc{Q(f,f)}.
\end{align}
By the well-known Bobylev identity in a more general case, see for example \cite{alexandre2000entropy,desvillettes2003use}, it holds that (up to an unimportant constant)
\begin{align}
\mathcal{F}_{v\mapsto \xi}^{-1}\lrc{Q^{-}(f,g)}(\xi)=& \n{\textbf{b}}_{L^{1}(\mathbb{S}^{d-1})}\int \frac{\wt{f}(\xi-\eta) \wt{g}(\eta)}{|\eta|^{d+\ga}}d\eta,\\
\mathcal{F}_{v\mapsto \xi}^{-1}\lrc{Q^{+}(f,g)}(\xi)=&\int_{\R^{d}\times \mathbb{S}^{d-1}}\frac{\wt{f}(\xi^{+}+\eta)\wt{g}(\xi^{-}-\eta)}{|\eta|^{d+\ga}}
\textbf{b}(\frac{\xi}{|\xi|}\cdot \omega)d\eta d\omega,
\end{align}
where $\xi^{+}=\frac{1}{2}\lrs{\xi+|\xi|\omega}$ and $\xi^{-}=\frac{1}{2}\lrs{\xi-|\xi|\omega}$.
For convenience, we take the notation that $\wt{Q}^{\pm}(\wt{f},\wt{g})=\mathcal{F}_{v\mapsto \xi}^{-1}\lrc{Q^{\pm}(f,g)}$.

In Sections \ref{section:Bilinear Estimate for Loss Term}-\ref{section:Bilinear Estimate for Gain Term}, we establish the bilinear estimates for the loss and gain terms respectively. Then in Section \ref{section:Proof of Well-posedness}, we complete the proof of the well-posedness of \eqref{equ:Boltzmann}.

\subsection{Bilinear Estimate for Loss Term}\label{section:Bilinear Estimate for Loss Term}

\begin{lemma}
For $s>\frac{d-1}{2}$, it holds that
\begin{align}\label{equ:bilinear estimate,Q-,L2}
\n{\lra{\nabla_{x}}^{s}\lra{\nabla_{\xi}}^{s+\ga}\wt{Q}^{-}(\wt{f},\wt{g})}_{L_{t}^{2+}L_{x\xi}^{2}}\lesssim \n{\wt{f}}_{X^{s,s+\ga,b}}\n{\wt{g}}_{X^{s,s+\ga,b}}.
\end{align}
\end{lemma}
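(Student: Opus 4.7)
The plan is to exploit the factorized structure of the loss term provided by the Bobylev identity already recorded in the excerpt, namely $\wt Q^{-}(\wt f,\wt g)=c\,\wt f*_\xi(|\cdot|^{-(d+\ga)}\wt g)$, equivalently $Q^-(f,g)(v)=c\,f(v)(g*_v|\cdot|^\ga)(v)$ on the velocity side. Unlike the gain term, no angular integration over $\mathbb S^{d-1}$ nor partial convolution remains: this is precisely the ``factorization of the kernel'' alluded to in the section preamble, and the bilinear bound reduces to a product-type estimate.

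First I would distribute $\lra{\nabla_x}^s$ via the fractional Leibniz (Kato-Ponce) rule applied to the pointwise $x$-product $f\cdot h$ with $h:=g*_v|\cdot|^\ga$, splitting the derivative onto either factor. The velocity weight $\lra{\nabla_\xi}^{s+\ga}\sim\lra{v}^{s+\ga}$ is pushed inside the convolution via the subadditive split $\lra{v}^{s+\ga}\lesssim\lra{u}^{s+\ga}+\lra{v-u}^{s+\ga}$, valid since $s+\ga>0$ throughout the parameter range. This yields either a weighted $g$ against the unchanged Riesz kernel $|\cdot|^\ga$, or $g$ against the modified kernel $|\cdot|^\ga\lra{\cdot}^{s+\ga}$; both remain locally integrable thanks to $d+\ga>0$, with the far-field behavior of the modified kernel handled by a dyadic decomposition in $|v-u|$.

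Each resulting product $\|F\cdot(g*_v K)\|_{L^{2+}_tL^2_{xv}}$ is then estimated by H\"{o}lder in $(x,v)$ and in $t$ as $\|F\|_{L^{q_1}_tL^{p_1}_{xv}}\|g*_vK\|_{L^{q_2}_tL^{p_2}_{xv}}$ with $1/p_1+1/p_2=1/2$. The Riesz-type convolution is controlled by Hardy-Littlewood-Sobolev, gaining $(d+\ga)/d$ of integrability in $v$. The mixed $L^{q}_tL^{p}_{xv}$ norms are then translated to $L^{q}_tL^p_{x\xi}$ and absorbed into $\|\wt f\|_{X^{s,s+\ga,b}}\|\wt g\|_{X^{s,s+\ga,b}}$ via the Strichartz estimate $\tfrac{2}{q}+\tfrac{2d}{p}=d$ for the propagator $e^{it\nabla_\xi\cdot\nabla_x}$ recalled in the introduction, together with the standard transfer principle $\|\wt f\|_{L^q_tL^p_{x\xi}}\lesssim\|\wt f\|_{X^{0,0,b}}$ for $b>\tfrac12$.

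I expect the main obstacle to be the simultaneous balancing of three constraints: the HLS admissibility $1<p<q<\infty$ with $1/p-1/q=(d+\ga)/d$, Strichartz admissibility $\tfrac2q+\tfrac{2d}p=d$, and the preservation of the regularity and weight budgets $s>\tfrac{d-1}{2}$ and $s+\ga>0$ at the worst case $\ga=\tfrac{1-d}{2}$. The strict inequality $s>\tfrac{d-1}{2}$ together with the mild excess $L^{2+}_t$ in time (over the endpoint $L^2_t$) should provide just enough slack to close the estimate uniformly in $d=2,3$.
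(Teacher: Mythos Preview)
Your outline is correct and follows the same skeleton as the paper's proof---fractional Leibniz in $x$, Hardy--Littlewood--Sobolev for the Riesz convolution in the velocity/$\xi$ variable, and Strichartz in the $X^{s,r,b}$ framework---and it will close. One simplification you have missed, however: in the representation
\[
\wt Q^{-}(\wt f,\wt g)(\xi)=c\int \wt f(\xi-\eta)\,\frac{\wt g(\eta)}{|\eta|^{d+\ga}}\,d\eta,
\]
the operator $\lra{\nabla_\xi}^{s+\ga}$ passes through the $\eta$-integral and lands \emph{entirely} on $\wt f(\xi-\eta)$, since this is the only $\xi$-dependent factor; equivalently, on the $v$-side, $\lra{v}^{s+\ga}$ multiplies the pointwise product $f(v)h(v)$ and may simply be absorbed into $f$. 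This eliminates your subadditive split $\lra{v}^{s+\ga}\lesssim\lra{u}^{s+\ga}+\lra{v-u}^{s+\ga}$, the modified kernel $|\cdot|^{\ga}\lra{\cdot}^{s+\ga}$, and the far-field dyadic decomposition altogether. The paper then places $\lra{\nabla_x}^{s}\lra{\nabla_\xi}^{s+\ga}\wt f$ in $L^\infty_t L^2_{x\xi}$ and the remaining factor $|\eta|^{-(d+\ga)}\lra{\nabla_x}^{s}\wt g$ in $L^{2+}_t L^1_\eta L^{\frac{2d}{d-1}-}_x$, handling the $L^1_\eta$ via the endpoint HLS bound $\int|\eta|^{-(d+\ga)}G(\eta)\,d\eta\lesssim \n{G}_{L^{\frac{2d}{d-1}-}}^{\alpha}\n{G}_{L^{\frac{d}{-\ga}+}}^{1-\alpha}$ together with the Sobolev embedding $W^{s+\ga,\frac{2d}{d-1}-}\hookrightarrow L^{\frac{d}{-\ga}+}$, and closing with the near-endpoint Strichartz pair $(2+,\frac{2d}{d-1}-)$.
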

\begin{proof}
By the fractional Leibniz rule in Lemma \ref{lemma:generalized leibniz rule}, we have
\begin{align*}
\left\Vert \wt{Q}^{-}(\wt{f},\wt{g})\right\Vert_{L_{t}^{2+}H_{x}^{s}H_{\xi}^{s+\ga}} = &\left\Vert \left\langle \nabla_{x}\right\rangle^{s} \int \left\langle \nabla _{\xi}\right\rangle ^{s+\ga}\widetilde{f}(t,x,\xi-\eta)
\frac{\widetilde{g}(t,x,\eta)}{|\eta|^{d+\ga}}d\eta \right\Vert _{L_{t}^{2+}L_{x\xi}^{2}}\\
\lesssim& \left\Vert  \int \bn{\lra{\nabla_{x}}^{s} \left\langle \nabla _{\xi}\right\rangle ^{s+\ga}\widetilde{f}(t,x,\xi-\eta)}_{L_{x}^{2}}
\bbn{\frac{\widetilde{g}(t,x,\eta)}{|\eta|^{d+\ga}}}_{L_{x}^{\infty}}d\eta \right\Vert _{L_{t}^{2+}L_{\xi }^{2}}\\
&+\left\Vert  \int \bn{ \left\langle \nabla _{\xi}\right\rangle ^{s+\ga}\widetilde{f}(t,x,\xi-\eta)}_{L_{x}^{2d+}}
\bbn{\frac{\lra{\nabla_{x}}^{s} \widetilde{g}(t,x,\eta)}{|\eta|^{d+\ga}}}_{L_{x}^{\frac{2d}{d-1}-}}d\eta \right\Vert _{L_{t}^{2+}L_{\xi }^{2}}.
\end{align*}
Applying Sobolev inequalities that $W^{s,\frac{2d}{d-1}-}\hookrightarrow L^{\infty}$, $W^{s,2}\hookrightarrow L^{2d+}$ and Young's inequality,
\begin{align}
\left\Vert \wt{Q}^{-}(\widetilde{f},\widetilde{g})\right\Vert_{L_{t}^{2+}H_{x}^{s}H_{\xi}^{s+\ga}}\lesssim&
  \bn{\lra{\nabla_{x}}^{s} \left\langle \nabla_{\xi}\right\rangle^{s+\ga}\widetilde{f}(t,x,\xi)}_{L_{t}^{\infty}L_{\xi}^{2}L_{x}^{2}}
\bbn{\frac{\lra{\nabla_{x}}^{s} \widetilde{g}(t,x,\eta)}{|\eta|^{d+\ga}}}_{L_{t}^{2+}L_{\eta}^{1}L_{x}^{\frac{2d}{d-1}-}}\notag\\
&+  \bn{ \left\langle \nabla _{\xi}\right\rangle ^{s+\ga}\widetilde{f}(t,x,\xi)}_{L_{t}^{\infty}L_{\xi}^{2}L_{x}^{2d+}}
\bbn{\frac{\lra{\nabla_{x}}^{s} \widetilde{g}(t,x,\eta)}{|\eta|^{d+\ga}}}_{L_{t}^{2+}L_{\eta}^{1}L_{x}^{\frac{2d}{d-1}-}}\notag\\
\lesssim& \bn{\lra{\nabla_{x}}^{s} \left\langle \nabla_{\xi}\right\rangle^{s+\ga}\widetilde{f}(t,x,\xi)}_{L_{t}^{\infty}L_{\xi}^{2}L_{x}^{2}}
\bbn{\frac{\lra{\nabla_{x}}^{s} \widetilde{g}(t,x,\eta)}{|\eta|^{d+\ga}}}_{L_{t}^{2+}L_{\eta}^{1}L_{x}^{\frac{2d}{d-1}-}}.\label{equ:bilinear estimate,Q-,g}
\end{align}
We are left to deal with the last term on the right hand side of \eqref{equ:bilinear estimate,Q-,g}. Set
\begin{align*}
G(\eta)=\bbn{\lra{\nabla_{x}}^{s} \widetilde{g}(t,x,\eta)}_{L_{x}^{\frac{2d}{d-1}-}}.
\end{align*}
Then by Hardy-Littlewood-Sobolev inequality \eqref{equ:endpoint estimate,hls,L1} in Lemma \ref{lemma:endpoint estimate,hls} with $\la=d+\ga$, we obtain
\begin{align*}
\int \frac{G(\eta)}{|\eta|^{d+\ga}}d\eta\lesssim \n{G}_{L^{\frac{2d}{d-1}-}}^{\al}\n{G}_{L^{\frac{d}{-\ga}+}}^{1-\al}.
\end{align*}
Therefore, we have
\begin{align*}
& \bbn{\frac{\lra{\nabla_{x}}^{s} \widetilde{g}(t,x,\eta)}{|\eta|^{d+\ga}}}_{L_{t}^{2+}L_{\eta}^{1}L_{x}^{\frac{2d}{d-1}-}}\\
\lesssim& \bn{\lra{\nabla_{x}}^{s} \widetilde{g}(t,x,\eta)}_{L_{t}^{2+}L_{\eta}^{\frac{2d}{d-1}-}L_{x}^{\frac{2d}{d-1}-}}^{\al}
\bn{\lra{\nabla_{x}}^{s} \widetilde{g}(t,x,\eta)}_{L_{t}^{2+}L_{\eta}^{\frac{d}{-\ga}+}L_{x}^{\frac{2d}{d-1}-}}^{1-\al}\\
\leq& \bn{\lra{\nabla_{x}}^{s} \widetilde{g}(t,x,\eta)}_{L_{t}^{2+}L_{x}^{\frac{2d}{d-1}-}L_{\eta}^{\frac{2d}{d-1}-}}^{\al}
\bn{\lra{\nabla_{x}}^{s} \widetilde{g}(t,x,\eta)}_{L_{t}^{2+}L_{x}^{\frac{2d}{d-1}-}L_{\eta}^{\frac{d}{-\ga}+}}^{1-\al}
\end{align*}
where in the last inequality we have used the Minkowski inequality. Applying Sobolev inequality that $W^{s+\ga,\frac{2d}{d-1}-}\hookrightarrow L^{\frac{d}{-\ga}+}$ and Strichartz estimate \eqref{equ:strichartz estimate,xsb}, we arrive at
\begin{align*}
&\bbn{\frac{\lra{\nabla_{x}}^{s} \widetilde{g}(t,x,\eta)}{|\eta|^{d+\ga}}}_{L_{t}^{2+}L_{\eta}^{1}L_{x}^{\frac{2d}{d-1}-}}\\
\leq& \bn{\lra{\nabla_{x}}^{s} \widetilde{g}(t,x,\eta)}_{L_{t}^{2+}L_{x}^{\frac{2d}{d-1}-}L_{\eta}^{\frac{2d}{d-1}-}}^{\al}
\bn{\lra{\nabla_{\eta}}^{s+\ga}\lra{\nabla_{x}}^{s} \widetilde{g}(t,x,\eta)}_{L_{t}^{2+}L_{x}^{\frac{2d}{d-1}-}L_{\eta}^{\frac{2d}{d-1}-}}^{1-\al}\\
\leq&\bn{\lra{\nabla_{\eta}}^{s+\ga}\lra{\nabla_{x}}^{s} \widetilde{g}(t,x,\eta)}_{L_{t}^{2+}L_{x}^{\frac{2d}{d-1}-}L_{\eta}^{\frac{2d}{d-1}-}}\\
\leq& \n{\wt{g}}_{X^{s,s+\ga,b}}.
\end{align*}
Hence, we complete the proof of \eqref{equ:bilinear estimate,Q-,L2}.
\end{proof}

\subsection{Bilinear Estimate for Gain Term}\label{section:Bilinear Estimate for Gain Term}
Before proving the bilinear estimate for the gain term, we first give a useful lemma as follows.
\begin{lemma}\label{lemma:Q+,bilinear estimate}
Let  $\frac{1}{p}+\frac{1}{q}=\frac{1}{2}$.
\begin{align}\label{equ:Q+,bilinear estimate}
\bbn{\int_{\mathbb{S}^{d-1}}\int_{\R^{d}}\frac{\wt{f}(\xi^{+}+\eta)\wt{g}(\xi^{-}-\eta)}{|\eta|^{d+\ga}}
\textbf{b}(\frac{\xi}{|\xi|}\cdot \omega)d\eta d\omega}_{L_{\xi}^{2}}\lesssim
\n{\wt{f}}_{L^{\frac{2pd}{2d-p\ga}}} \n{\wt{g}}_{L^{\frac{2qd}{2d-q\ga}}}.
\end{align}
In particular, we have
\begin{align}
\n{\wt{Q}^{+}(\wt{f},\wt{g})}_{L_{\xi}^{2}}
\lesssim& \n{\wt{f}}_{L_{\xi}^{2}}
\n{\wt{g}}_{L_{\xi}^{\frac{d}{-\ga}}},\label{equ:Q+,bilinear estimate,PM,f,g}\\
\n{\wt{Q}^{+}(\wt{f},\wt{g})}_{L_{\xi}^{2}}
\lesssim&  \n{\wt{f}}_{L_{\xi}^{\frac{d}{-\ga}}}
\n{\wt{g}}_{L_{\xi}^{2}}.\label{equ:Q+,bilinear estimate,PM,g,f}
\end{align}
\end{lemma}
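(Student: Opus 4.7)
The plan is to prove \eqref{equ:Q+,bilinear estimate} by combining the convolution structure arising from the Bobylev-type formula with a Young-type weighted convolution estimate, and then to deduce \eqref{equ:Q+,bilinear estimate,PM,f,g}--\eqref{equ:Q+,bilinear estimate,PM,g,f} as specializations. First, I would apply Minkowski's inequality in $\omega$ to reduce the problem to bounding the inner $\eta$-integral in $L^{2}_{\xi}$ uniformly in $\omega \in \mathbb{S}^{d-1}$; the Grad angular cutoff ensures $\int_{\mathbb{S}^{d-1}} \textbf{b}(\cos\theta) \, d\omega \lesssim 1$, so this reduction is essentially lossless.

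For fixed $\omega$, the change of variable $\zeta = \xi^{+} + \eta$ recasts the inner integral as
\[
J_{\omega}(\xi) = \int \wt{f}(\zeta)\, \wt{g}(\xi - \zeta) \, \frac{1}{|\zeta - \xi^{+}|^{d+\ga}} \, d\zeta,
\]
which displays a convolution-plus-fractional-integration character with a $\xi$-dependent center $\xi^{+}$. Since $|\cdot|^{-(d+\ga)} \in L^{d/(d+\ga),\infty}(\R^{d})$ for $\ga \in (-d, 0]$, I would invoke a weighted Young / Hardy--Littlewood--Sobolev convolution estimate of the type established in \cite{alonso2010estimates} to obtain $\n{J_{\omega}}_{L^{2}_{\xi}} \lesssim \n{\wt{f}}_{L^{p_{f}}} \n{\wt{g}}_{L^{p_{g}}}$ whenever $\frac{1}{p_{f}} + \frac{1}{p_{g}} = \frac{1}{2} - \frac{\ga}{d}$. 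A short scaling check shows that $p_{f} = \frac{2pd}{2d - p\ga}$ and $p_{g} = \frac{2qd}{2d - q\ga}$ indeed satisfy this identity whenever $\frac{1}{p} + \frac{1}{q} = \frac{1}{2}$, matching the exponents in the statement.

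The main obstacle is the nonlinear dependence of $\xi^{\pm}$ on $\xi$ through the factor $|\xi|$, which prevents a globally smooth change of variables $\xi \mapsto \xi^{+}$: its Jacobian $(1 + \omega \cdot \hat{\xi})/2^{d}$ degenerates exactly when $\omega$ is anti-parallel to $\hat{\xi}$. This is resolved by decomposing the $\xi$-space into the hemispheres $\lr{\omega \cdot \hat{\xi} > 0}$ and $\lr{\omega \cdot \hat{\xi} < 0}$: on the former the map $\xi \mapsto \xi^{+}$ has Jacobian bounded below by $1/2^{d}$, and on the latter we use $\xi \mapsto \xi^{-}$ instead, whose Jacobian $(1 - \omega \cdot \hat{\xi})/2^{d}$ is then bounded below by $1/2^{d}$. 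The symmetry $\omega \mapsto -\omega$ links the two cases, which together cover all of $\R^{d} \setminus \lr{0}$.

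Finally, the special estimates \eqref{equ:Q+,bilinear estimate,PM,f,g}--\eqref{equ:Q+,bilinear estimate,PM,g,f} follow directly from \eqref{equ:Q+,bilinear estimate} by specializing $(p, q) = \lrs{\frac{2d}{d+\ga},\, -\frac{2d}{\ga}}$ and its symmetric counterpart; both satisfy $\frac{1}{p} + \frac{1}{q} = \frac{1}{2}$ and yield the endpoint exponent pairs $L^{2} \times L^{d/(-\ga)}$ and $L^{d/(-\ga)} \times L^{2}$ respectively.
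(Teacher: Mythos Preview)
Your reduction by Minkowski in $\omega$ commits you to proving the bound
\[
\|J_\omega\|_{L^2_\xi}\lesssim \|\wt f\|_{L^{p_f}}\|\wt g\|_{L^{p_g}}
\]
\emph{uniformly in $\omega$}, and this is where the argument is incomplete. After your change of variable $\zeta=\xi^++\eta$, the integrand $\wt f(\zeta)\,\wt g(\xi-\zeta)\,|\zeta-\xi^+|^{-(d+\ga)}$ is not a convolution in $\xi$: the weight center $\xi^+=\tfrac12(\xi+|\xi|\omega)$ depends nonlinearly on $\xi$. Your hemisphere split controls the Jacobian of $\xi\mapsto\xi^+$, but after that substitution the factor $\wt g(\xi-\zeta)=\wt g(\xi^++\xi^--\zeta)$ involves $\xi^-$, which is a genuinely nonlinear function of $\xi^+$ (explicitly, writing $\xi^+=a\omega+b$ with $b\perp\omega$, one has $\xi^-=-\tfrac{|b|^2}{a}\omega+b$). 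So you never recover a form to which a standard Young or Hardy--Littlewood--Sobolev inequality applies. The result you cite from \cite{alonso2010estimates} is precisely the $\omega$-\emph{integrated} Maxwellian estimate \eqref{equ:Q+,bilinear estimate,constant kernel}; it does not furnish a fixed-$\omega$ bound of the type you need, and no such bound is supplied in your outline.

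The paper's proof sidesteps this entirely by reversing the order of your two reductions. It first applies Cauchy--Schwarz in $\eta$, which decouples the integrand into a product $F(\xi^+)G(\xi^-)$ with $F=(|\wt f|^2*|\cdot|^{-(d+\ga)})^{1/2}$ and $G=(|\wt g|^2*|\cdot|^{-(d+\ga)})^{1/2}$; this product has exactly the structure of the Maxwellian gain term, so the $\omega$-integrated Alonso--Carneiro estimate \eqref{equ:Q+,bilinear estimate,constant kernel} applies directly in $L^2_\xi$, giving $\|F\|_{L^p}\|G\|_{L^q}$. Only then does Hardy--Littlewood--Sobolev enter, to pass from $F,G$ back to $\wt f,\wt g$. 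Your final paragraph on the specializations $(p,q)=(\tfrac{2d}{d+\ga},-\tfrac{2d}{\ga})$ is correct and matches the paper.
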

\begin{proof}
For the case of Maxwellian molecules, it holds that
\begin{align}\label{equ:Q+,bilinear estimate,constant kernel}
\bbn{\int_{\mathbb{S}^{d-1}} \wt{f}(\xi^{+})\wt{g}(\xi^{-})
\textbf{b}(\frac{\xi}{|\xi|}\cdot \omega) d\omega}_{L_{\xi}^{2}}\lesssim \n{\wt{f}}_{L_{\xi}^{p}}\n{\wt{g}}_{L_{\xi}^{q}},\quad \frac{1}{p}+\frac{1}{q}=\frac{1}{2},
\end{align}
which is proved in \cite[Theorem 1]{alonso2010estimates}.
By Cauchy-Schwarz inequality and then \eqref{equ:Q+,bilinear estimate,constant kernel}, we have
\begin{align}\label{equ:Q+,bilinear estimate,proof}
&\bbn{\int_{\mathbb{S}^{d-1}}\int_{\mathbb{R}^{d}}\frac{\wt{f}(\xi^{+}+\eta)\wt{g}(\xi^{-}-\eta)}{|\eta|^{d+\ga}}
\textbf{b}(\frac{\xi}{|\xi|}\cdot \omega)d\eta d\omega}_{L_{\xi}^{2}}\\
\leq& \bbn{\int_{\mathbb{S}^{d-1}}\lrc{\int_{\mathbb{R}^{d}}\frac{|\wt{f}(\xi^{+}+\eta)|^{2}}{|\eta|^{d+\ga}}
d\eta}^{\frac{1}{2}}
\lrc{ \int_{\mathbb{R}^{d}}\frac{|\wt{g}(\xi^{-}-\eta)|^{2}}{|\eta|^{d+\ga}}d\eta}^{\frac{1}{2}}\textbf{b}(\frac{\xi}{|\xi|}\cdot \omega) d\omega}_{L_{\xi}^{2}}\notag\\
\lesssim& \bbn{\lrc{\int_{\mathbb{R}^{d}}\frac{|\wt{f}(\xi+\eta)|^{2}}{|\eta|^{d+\ga}}
d\eta}^{\frac{1}{2}}}_{L_{\xi}^{p}}
\bbn{\lrc{\int_{\mathbb{R}^{d}}\frac{|\wt{g}(\xi-\eta)|^{2}}{|\eta|^{d+\ga}}d\eta}^{\frac{1}{2}}}_{L_{\xi}^{q}} \notag\\
=& \bbn{\int_{\mathbb{R}^{d}}\frac{|\wt{f}(\xi+\eta)|^{2}}{|\eta|^{d+\ga}}
d\eta}^{\frac{1}{2}}_{L_{\xi}^{\frac{p}{2}}}
\bbn{\int_{\mathbb{R}^{d}}\frac{|\wt{g}(\xi-\eta)|^{2}}{|\eta|^{d+\ga}}d\eta}^{\frac{1}{2}}_{L_{\xi}^{\frac{q}{2}}}\notag\\
\lesssim& \n{\wt{f}}_{L^{\frac{2pd}{2d-p\ga}}} \n{\wt{g}}_{L^{\frac{2qd}{2d-q\ga}}},\notag
\end{align}
where in the last inequality we have used Hardy-Littlewood-Sobolev inequality \eqref{equ:hardy-littlewood-sobolev inequality}.
This completes the proof of \eqref{equ:Q+,bilinear estimate}.
Then by taking
$$(p,q)=(\frac{2d}{d+\ga},-\frac{2d}{\ga}),\quad (p,q)=(-\frac{2d}{\ga},\frac{2d}{d+\ga}),$$
we immediately obtain \eqref{equ:Q+,bilinear estimate,PM,f,g} and \eqref{equ:Q+,bilinear estimate,PM,g,f}.

\end{proof}

To prove the bilinear estimate for the gain term, we need a detailed frequency analysis from Littlewood-Paley theory.\footnote{See \cite{chen2019derivation,chen2022unconditional,CSZ22} for some examples sharing similar critical flavor but carrying completely different structures.}
Let $\chi(x)$ be a smooth function and satisfy $\chi(x)=1$ for all $|x|\leq 1$ and $\chi(x)=0$ for $|x|\geq 2$. Let $N$ be a dyadic number and set $\vp_{N}(x)=\chi(\frac{x}{N})-\chi(\frac{x}{2N})$. Define the Littlewood-Paley projector
\begin{align}
\widehat{P_{N}u}(\eta)=\vp_{N}(\eta)\widehat{u}(\eta).
\end{align}
We denote by $P_{N}^{x}$/$P_{M}^{\xi}$ the projector of the $x$-variable and $\xi$-variable respectively.
Now, we delve into the analysis of the bilinear estimate.
\begin{lemma}
 For $s>\frac{d-1}{2}$, we have
\begin{align}\label{equ:Q+,bilinear estimate,L2}
\n{\lra{\nabla_{x}}^{s}\lra{\nabla_{\xi}}^{s+\ga}\wt{Q}^{+}(\wt{f},\wt{g})}_{L_{t}^{2}L_{x\xi}^{2}}\lesssim \n{\wt{f}}_{X^{s,s+\ga,b}}\n{\wt{g}}_{X^{s,s+\ga,b}}.
\end{align}
\end{lemma}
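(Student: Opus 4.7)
The plan is to reduce the estimate to the Alonso--Carneiro convolution inequality of Lemma \ref{lemma:Q+,bilinear estimate} combined with the Strichartz bounds of Appendix \ref{section:Strichartz Estimates}, by first stripping the outer $\xi$-weight off $\wt{Q}^+$ through a geometric identity satisfied by the post-collisional Fourier variables.

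The key structural observation is that $(\xi^+ + \eta) + (\xi^- - \eta) = \xi^+ + \xi^- = \xi$, which, together with $s + \ga \ge 0$ (automatic from $s > (d-1)/2$ and $\ga \ge (1-d)/2$), yields the pointwise bound
\begin{align*}
\lra{\xi}^{s+\ga} \lesssim \lra{\xi^+ + \eta}^{s+\ga} + \lra{\xi^- - \eta}^{s+\ga}.
\end{align*}
After a Littlewood--Paley decomposition $\wt{f} = \sum_{M_1} P_{M_1}^{\xi}\wt{f}$, $\wt{g} = \sum_{M_2} P_{M_2}^{\xi}\wt{g}$ in the velocity-frequency, this forces the output $\xi$-frequency $M$ of $\wt{Q}^+(P_{M_1}^\xi \wt{f}, P_{M_2}^\xi \wt{g})$ to satisfy $M \lesssim \max(M_1, M_2)$, absorbing the $\lra{\nabla_\xi}^{s+\ga}$ weight onto the higher-frequency input with no derivative loss. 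By symmetry, the task reduces to controlling
\begin{align*}
\n{\lra{\nabla_x}^{s}\, \wt{Q}^{+}(\lra{\nabla_\xi}^{s+\ga}\wt{f},\,\wt{g})}_{L_t^2 L_x^2 L_\xi^2}
\end{align*}
together with its mirror counterpart.

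I would then apply the fractional Leibniz rule in $x$ (valid because $\wt{Q}^+$ is pointwise in $x$) to distribute $\lra{\nabla_x}^s$ across the two factors, and invoke Lemma \ref{lemma:Q+,bilinear estimate}, in the endpoint form \eqref{equ:Q+,bilinear estimate,PM,f,g}--\eqref{equ:Q+,bilinear estimate,PM,g,f}, pointwise in $(t,x)$ to handle the $\xi$-integration and the singular factor $|\eta|^{-(d+\ga)}$. This produces a sum of terms of the schematic form $\n{\lra{\nabla_x}^{s_1} \lra{\nabla_\xi}^{s+\ga} \wt{f}}_{L_t^2 L_x^{p_1} L_\xi^{2}}\,\n{\lra{\nabla_x}^{s_2} \wt{g}}_{L_t^\infty L_x^{p_2} L_\xi^{d/(-\ga)}}$, with $s_1 + s_2 = s$ and $1/p_1 + 1/p_2 = 1/2$. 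These mixed norms are closed in $X^{s,s+\ga,b}$ by combining Minkowski's inequality (to swap $L_x^{p}$ and $L_\xi^{q}$), Sobolev embeddings in the variable with spare regularity (mirroring the loss-term analysis of Section \ref{section:Bilinear Estimate for Loss Term}, e.g.\ $W^{s,\tfrac{2d}{d-1}-} \hookrightarrow L^\infty$), and the Strichartz-type estimate \eqref{equ:strichartz estimate,xsb}. The threshold $s > (d-1)/2$ is precisely what makes all of the above Sobolev embeddings close and the dyadic sum over $(M_1, M_2)$ converge.

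The main obstacle lies in the bookkeeping in the second step. Unlike the loss term, whose kernel factorizes cleanly as $\wt{f}(\xi-\eta)\wt{g}(\eta)$ and for which Young's inequality suffices, the gain term's arguments $\xi^\pm \mp \eta$ are twisted by the sphere integration in $\omega$; one must verify that the dyadic $\xi$-localization of $\wt{f}$ and $\wt{g}$ transfers, uniformly in $\omega$, to a controlled $\xi$-localization of $\wt{Q}^+$, and that the endpoint Alonso--Carneiro exponents $(2,\,d/(-\ga))$ are simultaneously compatible with the Leibniz distribution of $\lra{\nabla_x}^s$ and with admissible Strichartz pairs in $(x,\xi)$. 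The range $\tfrac{1-d}{2}\le\ga\le 0$ appearing in the main theorem is exactly the one that makes this simultaneous matching possible at the sharp regularity $s > (d-1)/2$.
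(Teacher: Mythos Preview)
Your overall strategy is sound and mirrors the paper's, but the ``key structural observation'' contains a genuine error that breaks the argument as written. The operator $\lra{\nabla_\xi}^{s+\ga}$ is a \emph{derivative} in $\xi$, hence a weight $\lra{v}^{s+\ga}$ on the physical velocity side; it is not the multiplication operator $\lra{\xi}^{s+\ga}$. Consequently the Bobylev identity $\xi^{+}+\xi^{-}=\xi$ on the $\xi$-side is irrelevant for absorbing $\lra{\nabla_\xi}^{s+\ga}$: knowing $|\xi|\lesssim|\xi^{+}+\eta|+|\xi^{-}-\eta|$ tells you nothing about how $|v|$ compares to $|v^{*}|$ and $|u^{*}|$. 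The Littlewood--Paley projector $P_M^{\xi}$ localizes the Fourier-dual variable $v$ (i.e.\ $|v|\sim M$), so the constraint $M\lesssim\max(M_1,M_2)$ you need must be proved on the $v$-side. The paper obtains it from energy conservation $|v|^{2}\le|v^{*}|^{2}+|u^{*}|^{2}$, which forces $|v^{*}|\ge M/4$ or $|u^{*}|\ge M/4$ whenever $|v|\ge M/2$; this is the correct replacement for your linear identity.

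Once this is fixed, your outline is close to the paper's, though the paper does not use the fractional Leibniz rule in $x$. Instead it runs a full Littlewood--Paley decomposition in \emph{both} $x$ and $\xi$ and splits into four cases according to which input carries the top $x$-frequency and which the top $\xi$-frequency. This matters in the ``mixed'' cases (your Term~2): when the $\xi$-weight lands on $\wt f$ but the $x$-derivative lands on $\wt g$, one must choose the version of Lemma~\ref{lemma:Q+,bilinear estimate} that puts the high-$\xi$-frequency factor in $L_\xi^2$ and the \emph{other} factor in $L_\xi^{d/(-\ga)}$, then pair the Strichartz exponent $L_t^2L_{x\xi}^{2d/(d-1)}$ with the factor carrying no full $x$-derivative. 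The dyadic framework makes this bookkeeping (and the Schur-test summation over $M,M_1,M_2,N,N_1,N_2$) transparent, whereas a direct Leibniz approach requires more care to avoid placing the $L_\xi^{d/(-\ga)}$ demand on a factor that already carries the full $\lra{\nabla_x}^s$.
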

\begin{proof}
By duality,
\eqref{equ:Q+,bilinear estimate,L2} is equivalent to
\begin{align}\label{equ:Q+,bilinear estimate,L2,equivalent}
\int \wt{Q}^{+}(\wt{f},\wt{g}) h dx d\xi dt\lesssim
\n{\wt{f}}_{X^{s,s+\ga,b}}\n{\wt{g}}_{X^{s,s+\ga,b}} \n{h}_{L_{t}^{2}H_{\xi}^{-s-\ga}H_{x}^{-s}} .
\end{align}
We denote by $I$ the integral in \eqref{equ:Q+,bilinear estimate,L2,equivalent}.
Inserting a Littlewood-Paley decomposition gives that
\begin{align*}
I=\sum_{\substack{M,M_{1},M_{2}\\N,N_{1},N_{2}}} I_{M,M_{1},M_{2},N,N_{1},N_{2}}
\end{align*}
where
\begin{align*}
I_{M,M_{1},M_{2},N,N_{1},N_{2}}= \int \wt{Q}^{+}(P_{N_{1}}^{x}P_{M_{1}}^{\xi}\wt{f},P_{N_{2}}^{x}P_{M_{2}}^{\xi}\wt{g}) P_{N}^{x}P_{M}^{\xi}h dx d\xi dt.
\end{align*}
Note that $\wt{Q}^{+}$ commutes with $P_{N}^{x}$, so this gives the constraint that $N\lesssim \max\lrs{N_{1},N_{2}}$ due to that
 \begin{align} \label{equ:property,constraint,projector}
 P_{N}^{x}(P_{N_{1}}^{x}\wt{f}P_{N_{2}}^{x}\wt{g})=0, \quad \text{if $N\geq 10\max\lrs{N_{1},N_{2}}$.}
 \end{align}
In addition, we observe that such a property \eqref{equ:property,constraint,projector} is also hinted in the $\xi$-variable, that is,
\begin{align}\label{equ:property,constraint,projector,v,variable}
P_{M}^{\xi}\wt{Q}^{+}(P_{M_{1}}^{\xi}\wt{f},P_{M_{2}}^{\xi}\wt{g})=0, \quad \text{if $M\geq 10\max\lrs{M_{1},M_{2}}$.}
\end{align}
Indeed, notice that
\begin{align}\label{equ:fourier transform,decomposition,Q+}
\cf_{\xi}\lrs{P_{M}^{\xi}\wt{Q}^{+}(P_{M_{1}}^{\xi}\wt{f},P_{M_{2}}^{\xi}\wt{g})}=\vp_{M}(v)\int_{\mathbb{R}^{d}}\int_{\mathbb{S}^{d-1}} (\vp_{M_{1}}f)(v^{\ast })(\vp_{M_{2}}g)(u^{\ast}) B(u-v,\omega)dud\omega.
\end{align}
Then from the energy conservation which implies the inequality $|v|^{2}\leq |v^{*}|^{2}+|u^{*}|^{2}$, we have the lower bound that
\begin{align}\label{equ:u*,v*,lower bound}
\text{$|u^{*}|\geq \frac{M}{4}$, or  $|v^{*}|\geq \frac{M}{4}$}
\end{align}
for all $(u,\omega)\in \R^{d}\times \mathbb{S}^{d-1}$ and $|v|\geq \frac{M}{2}$.
Therefore, for $M\geq 10\max\lrs{M_{1},M_{2}}$, the lower bound \eqref{equ:u*,v*,lower bound} forces the $v^{*}$-variable or $u^{*}$-variable off their own support set, which makes the integral on the right hand side of \eqref{equ:fourier transform,decomposition,Q+} vanish. Hence, this gives the constraint that $M\lesssim \max\lrs{M_{1},M_{2}}$.

 Now, we divide the sum into four cases as follows

Case A. $M_{1}\geq M_{2}$, $N_{1}\geq N_{2}$.

Case B. $M_{1}\leq M_{2}$, $N_{1}\geq N_{2}$.

Case C. $M_{1}\geq M_{2}$, $N_{1}\leq N_{2}$.

Case D. $M_{1}\leq M_{2}$, $N_{1}\leq N_{2}$.

We only need to treat Cases A and B, as Cases C and D follow similarly.

\textbf{Case A. $M_{1}\geq M_{2}$, $N_{1}\geq N_{2}$.}

Let $I_{A}$ denote the integral restricted to the Case A. By Cauchy-Schwarz,
\begin{align*}
I_{A}\lesssim
\sum_{\substack{ _{\substack{ M,M_{1}\geq M_{2}  \\ N,N_{1}\geq N_{2}}}
\\ M_{1}\geqslant M,N_{1}\geqslant N}}
\n{\wt{Q}^{+}(P_{N_{1}}^{x}P_{M_{1}}^{\xi}\wt{f},P_{N_{2}}^{x}P_{M_{2}}^{\xi}\wt{g})}_{L_{t}^{2}L_{x\xi}^{2}}
\n{P_{N}^{x}P_{M}^{\xi}h}_{L_{t}^{2}L_{x\xi}^{2}}.
\end{align*}
By using the estimate \eqref{equ:Q+,bilinear estimate,PM,f,g} in Lemma \ref{equ:Q+,bilinear estimate} and then H\"{o}lder inequality, we have
\begin{align*}
I_{A} \lesssim &\sum_{\substack{ _{\substack{ M,M_{1}\geq M_{2}  \\ N,N_{1}\geq N_{2}}}
\\ M_{1}\geqslant M,N_{1}\geqslant N}}
 \bbn{\n{P_{N_{1}}^{x}P_{M_{1}}^{\xi }\widetilde{f}}_{L_{\xi}^{2}}
\Vert P_{N_{2}}^{x}P_{M_{2}}^{\xi }\widetilde{g}\Vert _{L_{\xi }^{\frac{d}{-\ga}}}}_{L_{t}^{2}L_{x}^{2}}\n{P_{N}^{x}P_{M}^{\xi}h}_{L_{t}^{2}L_{x\xi}^{2}} \\
\leq&\sum_{\substack{ _{\substack{ M,M_{1}\geq M_{2}  \\ N,N_{1}\geq N_{2}}}
\\ M_{1}\geqslant M,N_{1}\geqslant N}}
 \n{ P_{N_{1}}^{x}P_{M_{1}}^{\xi }\widetilde{f}}_{L_{t}^{\infty}L_{x}^{2}L_{\xi }^{2}}
 \n{ P_{N_{2}}^{x}P_{M_{2}}^{\xi }\widetilde{g}}_{L_{t}^{2}L_{x}^{\infty}L_{\xi }^{\frac{d}{-\ga}}}
 \n{P_{N}^{x}P_{M}^{\xi}h}_{L_{t}^{2}L_{x\xi}^{2}} .
\end{align*}
By using Minkowski inequality, Sobolev inequality that $W^{\frac{d-1}{2}+\ga,\frac{2d}{d-1}}\hookrightarrow L^{\frac{d}{-\ga}}$, and Bernstein inequality that $\n{P_{N_{2}}^{x}\wt{g}}_{L_{x}^{\infty}}\lesssim \n{\lra{\nabla_{x}}^{\frac{d-1}{2}}P_{N_{2}}^{x}\wt{g}}_{L_{x}^{\frac{2d}{d-1}}}$, we obtain
\begin{align*}
I_{A} \lesssim &\sum_{\substack{ _{\substack{ M,M_{1}\geq M_{2}  \\ N,N_{1}\geq N_{2}}}
\\ M_{1}\geqslant M,N_{1}\geqslant N}}
\frac{N^{s}M^{s+\ga}}{N_{1}^{s}M_{1}^{s+\ga}}
\n{P_{N_{1}}^{x}P_{M_{1}}^{\xi }\lra{\nabla_{x}}^{s}\lra{\nabla _{\xi}}^{s+\ga}\widetilde{f}}_{L_{t}^{\infty}L_{x}^{2}L_{\xi}^{2}} \\
&\times \n{P_{N_{2}}^{x}P_{M_{2}}^{\xi}\lra{\nabla_{x}}^{\frac{d-1}{2}}\lra{\nabla_{\xi}}^{\frac{d-1}{2}+\ga} \widetilde{g}}_{L_{t}^{2}L_{x}^{\frac{2d}{d-1}}L_{\xi}^{\frac{2d}{d-1}}}
\n{P_{N}^{x}P_{M}^{\xi}\lra{\nabla_{x}}^{-s}\lra{\nabla_{\xi}}^{-s-\ga}h}_{L_{t}^{2}L_{x\xi}^{2}}\\
\lesssim &\sum_{\substack{ _{\substack{ M,M_{1}\geq M_{2}  \\ N,N_{1}\geq N_{2}}}
\\ M_{1}\geqslant M,N_{1}\geqslant N}}
\frac{N^{s}M^{s+\ga}}{
N_{1}^{s}M_{1}^{s+\ga}}
\frac{1}{N_{2}^{s-\frac{d-1}{2}}}\frac{1}{M_{2}^{s-\frac{d-1}{2}}} \n{P_{N}^{x}P_{M}^{\xi}h}_{L_{t}^{2}H_{\xi}^{-s-\ga}H_{x}^{-s}}\\
&\times \n{P_{N_{1}}^{x}P_{M_{1}}^{\xi }\left\langle \nabla
_{x}\right\rangle^{s}\left\langle \nabla_{\xi}\right\rangle^{s+\ga}\widetilde{f}}_{L_{t}^{\infty}L_{x}^{2}L_{\xi }^{2}}
\n{P_{N_{2}}^{x}P_{M_{2}}^{\xi}\left\langle \nabla _{x}\right\rangle^{s}\left\langle \nabla _{\xi
}\right\rangle ^{s+\ga}\widetilde{g}}_{L_{t}^{2}L_{x}^{\frac{2d}{d-1}}L_{\xi}^{\frac{2d}{d-1}}},
\end{align*}
where in the last inequality we have used Bernstein inequality again.
By Strichartz estimate \eqref{equ:strichartz estimate,xsb},
\begin{align*}
I_{A}\lesssim&\sum_{\substack{ _{\substack{ M,M_{1}\geq M_{2}  \\ N,N_{1}\geq N_{2}}}
\\ M_{1}\geqslant M,N_{1}\geqslant N}}
\frac{N^{s}M^{s+\ga}}{
N_{1}^{s}M_{1}^{s+\ga}}
\frac{1}{N_{2}^{s-\frac{d-1}{2}}}\frac{1}{M_{2}^{s-\frac{d-1}{2}}} \n{P_{N}^{x}P_{M}^{\xi}h}_{L_{t}^{2}H_{\xi}^{-s-\ga}H_{x}^{-s}}\\
&\times \Vert P_{N_{1}}^{x}P_{M_{1}}^{\xi }\widetilde{f}\Vert _{X^{s,s+\ga,b}}\Vert P_{N_{2}}^{x}P_{M_{2}}^{\xi }\widetilde{g}\Vert _{X^{s,s+\ga,b}}.
   \end{align*}
Note that $s>\frac{d-1}{2}$, so we use that $\Vert P_{N_{2}}^{x}P_{M_{2}}^{\xi }\widetilde{g}\Vert _{X^{s,s+\ga,b}}\lesssim \Vert\widetilde{g}\Vert _{X^{s,s+\ga,b}}$ and then carry out the $N_{2}$, $M_{2}$ sums to obtain
\begin{align*}
I_{A}\lesssim& \n{\widetilde{g}}_{X^{s,s+\ga,b}}\sum_{\substack{ M_{1}\geq M  \\ N_{1}\geq N}}
\frac{N^{s}M^{s+\ga}}{N_{1}^{s}M_{1}^{s+\ga}}\n{P_{N_{1}}^{x}P_{M_{1}}^{\xi}\wt{f}}
_{X^{s,s+\ga,b}}\n{P_{N}^{x}P_{M}^{\xi}h}_{L_{t}^{2}H_{\xi}^{-s-\ga}H_{x}^{-s}}.
\end{align*}
By Cauchy-Schwarz in $M$, $M_{1}$, $N$ and $N_{1}$, we have
\begin{align}\label{equ:bilinear estimate,Q+,cauchy-schwarz}
I_{A}\lesssim& \n{\widetilde{g}}_{X^{s,s+\ga,b}}\lrs{\sum_{\substack{ M_{1}\geq M  \\ N_{1}\geq N}}
\frac{N^{s}M^{s+\ga}}{N_{1}^{s}M_{1}^{s+\ga}}
\n{P_{N_{1}}^{x}P_{M_{1}}^{\xi}\wt{f}}_{X^{s,s+\ga,b}}^{2}}^{\frac{1}{2}}\\
&\lrs{\sum_{\substack{ M_{1}\geq M  \\ N_{1}\geq N}}
\frac{N^{s}M^{s+\ga}}{N_{1}^{s}M_{1}^{s+\ga}}
\n{P_{N}^{x}P_{M}^{\xi}h}_{L_{t}^{2}H_{\xi}^{-s-\ga}H_{x}^{-s}}^{2}
}^{\frac{1}{2}} \notag \\
\lesssim &\n{\wt{f}}_{X^{s,s+\ga,b}}\n{\wt{g}}_{X^{s,s+\ga,b}} \n{h}_{L_{t}^{2}H_{\xi}^{-s-\ga}H_{x}^{-s}},\notag
\end{align}
which completes the proof of \eqref{equ:Q+,bilinear estimate,L2,equivalent} for Case A.

\textbf{Case B. $M_{1}\leq M_{2}$, $N_{1}\geq N_{2}$.}

Following the same way as Case A, we have

\begin{align*}
I_{B}\lesssim
\sum_{\substack{ _{\substack{ M,M_{2}\geq M_{1}  \\ N,N_{1}\geq N_{2}}}
\\ M_{2}\geqslant M,N_{1}\geqslant N}}
\n{\wt{Q}^{+}(P_{N_{1}}^{x}P_{M_{1}}^{\xi}\wt{f},P_{N_{2}}^{x}P_{M_{2}}^{\xi}\wt{g})}_{L_{t}^{2}L_{x\xi}^{2}}
\n{P_{N}^{x}P_{M}^{\xi}h}_{L_{t}^{2}L_{x\xi}^{2}}.
\end{align*}
By using the estimate \eqref{equ:Q+,bilinear estimate,PM,g,f} in Lemma \ref{equ:Q+,bilinear estimate} and then H\"{o}lder inequality, we have
\begin{align*}
I_{B} \lesssim &\sum_{\substack{ _{\substack{ M,M_{2}\geq M_{1}  \\ N,N_{1}\geq N_{2}}}
\\ M_{2}\geqslant M,N_{1}\geqslant N}}
 \bbn{\n{P_{N_{1}}^{x}P_{M_{1}}^{\xi }\widetilde{f}}_{L_{\xi}^{\frac{d}{-\ga}}}
\Vert P_{N_{2}}^{x}P_{M_{2}}^{\xi }\widetilde{g}\Vert _{L_{\xi }^{2}}}_{L_{t}^{2}L_{x}^{2}}\n{P_{N}^{x}P_{M}^{\xi}h}_{L_{t}^{2}L_{x\xi}^{2}} \\
\leq&\sum_{\substack{ _{\substack{ M,M_{2}\geq M_{1}  \\ N,N_{1}\geq N_{2}}}
\\ M_{2}\geqslant M,N_{1}\geqslant N}}
 \n{ P_{N_{1}}^{x}P_{M_{1}}^{\xi }\widetilde{f}}_{L_{t}^{2}L_{x}^{\frac{2d}{d-1}}L_{\xi }^{\frac{d}{-\ga}}}
 \n{ P_{N_{2}}^{x}P_{M_{2}}^{\xi }\widetilde{g}}_{L_{t}^{\infty}L_{x}^{2d}L_{\xi}^{2}}
 \n{P_{N}^{x}P_{M}^{\xi}h}_{L_{t}^{2}L_{x\xi}^{2}} .
\end{align*}
By Minkowski inequality, Sobolev inequality that $W^{\frac{d-1}{2}+\ga,\frac{2d}{d-1}}\hookrightarrow L^{\frac{d}{-\ga}}$, $W^{\frac{d-1}{2},2}\hookrightarrow L^{2d}$ and Bernstein inequality, we obtain
\begin{align*}
I_{B} \lesssim &\sum_{\substack{ _{\substack{ M,M_{2}\geq M_{1}  \\ N,N_{1}\geq N_{2}}}
\\ M_{2}\geqslant M,N_{1}\geqslant N}}
\n{P_{N_{1}}^{x}P_{M_{1}}^{\xi }\lra{\nabla_{\xi}}^{\frac{d-1}{2}+\ga}\widetilde{f}}_{L_{t}^{2}L_{x}^{\frac{2d}{d-1}}L_{\xi}^{\frac{2d}{d-1}}} \\
&\times \n{P_{N_{2}}^{x}P_{M_{2}}^{\xi}\lra{\nabla_{x}}^{\frac{d-1}{2}} \widetilde{g}}_{L_{t}^{\infty}L_{x}^{2}L_{\xi}^{2}}
\n{P_{N}^{x}P_{M}^{\xi}h}_{L_{t}^{2}L_{x\xi}^{2}}\\
\lesssim &\sum_{\substack{ _{\substack{ M,M_{2}\geq M_{1}  \\ N,N_{1}\geq N_{2}}}
\\ M_{2}\geqslant M,N_{1}\geqslant N}}
\frac{N^{s}M^{s+\ga}}{
N_{1}^{s}M_{2}^{s+\ga}}
\frac{1}{N_{2}^{s-\frac{d-1}{2}}}\frac{1}{M_{1}^{s-\frac{d-1}{2}}} \n{P_{N}^{x}P_{M}^{\xi}h}_{L_{t}^{2}H_{\xi}^{-s-\ga}H_{x}^{-s}}\\
&\times \n{P_{N_{1}}^{x}P_{M_{1}}^{\xi }\lra{ \nabla_{x}}^{s}\lra{ \nabla_{\xi}}^{s+\ga}\widetilde{f}}_{L_{t}^{2}L_{x}^{\frac{2d}{d-1}}L_{\xi}^{\frac{2d}{d-1}}}
\n{P_{N_{2}}^{x}P_{M_{2}}^{\xi}\lra{\nabla _{x}}^{s}\left\langle \nabla_{\xi
}\right\rangle ^{s+\ga}\widetilde{g}}_{L_{t}^{\infty}L_{x}^{2}L_{\xi }^{2}}.
\end{align*}
By Strichartz estimate \eqref{equ:strichartz estimate,xsb},
\begin{align*}
I_{B}\lesssim&\sum_{\substack{ _{\substack{ M,M_{2}\geq M_{1}  \\ N,N_{1}\geq N_{2}}}
\\ M_{2}\geqslant M,N_{1}\geqslant N}}
\frac{N^{s}M^{s+\ga}}{
N_{1}^{s}M_{2}^{s+\ga}}
\frac{1}{N_{2}^{s-\frac{d-1}{2}}}\frac{1}{M_{1}^{s-\frac{d-1}{2}}} \n{P_{N}^{x}P_{M}^{\xi}h}_{L_{t}^{2}H_{\xi}^{-s-\ga}H_{x}^{-s}}\\
&\times \Vert P_{N_{1}}^{x}P_{M_{1}}^{\xi }\widetilde{f}\Vert _{X^{s,s+\ga,b}}\Vert P_{N_{2}}^{x}P_{M_{2}}^{\xi }\widetilde{g}\Vert _{X^{s,s+\ga,b}}.
   \end{align*}
Note that $s>\frac{d-1}{2}$, so we use that
$$\n{P_{N_{1}}^{x}P_{M_{1}}^{\xi }\widetilde{f}}_{X^{s,s+\ga,b}}\lesssim \n{P_{N_{1}}^{x}\widetilde{f}} _{X^{s,s+\ga,b}},\quad
\n{P_{N_{2}}^{x}P_{M_{2}}^{\xi }\widetilde{g}}_{X^{s,s+\ga,b}}\lesssim \n{P_{M_{2}}^{\xi }\widetilde{g}} _{X^{s,s+\ga,b}},$$ and then carry out the $N_{2}$, $M_{1}$ sums to obtain
\begin{align*}
I_{B}\lesssim& \sum_{\substack{ M_{2}\geq M  \\ N_{1}\geq N}}
\frac{N^{s}M^{s+\ga}}{N_{1}^{s}M_{2}^{s+\ga}}
\n{P_{N_{1}}^{x}\wt{f}}_{X^{s,s+\ga,b}}
\n{P_{M_{2}}^{\xi }\widetilde{g}} _{X^{s,s+\ga,b}}
\n{P_{N}^{x}P_{M}^{\xi}h}_{L_{t}^{2}H_{\xi}^{-s-\ga}H_{x}^{-s}}.
\end{align*}
In a similar way to \eqref{equ:bilinear estimate,Q+,cauchy-schwarz}, we use Cauchy-Schwarz inequality to get
\begin{align*}
I_{B}\lesssim& \sum_{N_{1}\geq N}\frac{N^{s}}{N_{1}^{s}}\n{P_{N_{1}}^{x}\wt{f}}_{X^{s,s+\ga,b}}\lrs{\sum_{M_{2}\geq M}\frac{M^{s+\ga}}{M_{2}^{s+\ga}}\n{P_{M_{2}}^{\xi }\widetilde{g}} _{X^{s,s+\ga,b}}^{2}}^{\frac{1}{2}}\\
&\times \lrs{\sum_{M_{2}\geq M}\frac{M^{s+\ga}}{M_{2}^{s+\ga}}\n{P_{N}^{x}P_{M}^{\xi}h}_{L_{t}^{2}H_{\xi}^{-s-\ga}H_{x}^{-s}}^{2}}^{\frac{1}{2}}\\
\lesssim&\n{\widetilde{g}}_{X^{s,s+\ga,b}}\sum_{N_{1}\geq N}\frac{N^{s}}{N_{1}^{s}}\n{P_{N_{1}}^{x}\wt{f}}_{X^{s,s+\ga,b}}\n{P_{N}^{x}h}_{L_{t}^{2}H_{\xi}^{-s-\ga}H_{x}^{-s}}\\
\lesssim& \n{\wt{f}}_{X^{s,s+\ga,b}}\n{\wt{g}}_{X^{s,s+\ga,b}} \n{h}_{L_{t}^{2}H_{\xi}^{-s-\ga}H_{x}^{-s}}.
\end{align*}
Hence, we complete the proof of of \eqref{equ:Q+,bilinear estimate,L2,equivalent} for Case $B$.

\end{proof}

\subsection{Well-posedness in Fourier Restriction Norm Space}\label{section:Proof of Well-posedness}

We first recall some standard results on the Fourier restriction norms and Strichartz estimates.

\begin{lemma}
Let $b\in (\frac{1}{2},1)$, $s\in \R$, $r\in \R$, and $\theta(t)$ be a smooth cutoff function.
Define
\begin{align}
U(t):=e^{it \nabla_{x}\cdot \nabla_{\xi}},\quad D(F):=\int_{0}^{t}U(t-\tau)F(\tau)d\tau.
\end{align}
Then we have
\begin{align}
\n{\wt{f}}_{C_{t}^{0}H_{x}^{s}H_{\xi}^{r}}\lesssim& \n{\wt{f}}_{X^{s,r,b}} \label{equ:xsb,infty} ,\\
\n{\theta(t)S(t)\wt{f}_{0}}_{X^{s,r,b}}\lesssim& \n{\wt{f}_{0}}_{H_{x}^{s}H_{\xi}^{r}}\label{equ:xsb,free solution},\\
\n{\theta(t)D(F)}_{X^{s,r,b}}\lesssim& \n{F}_{X^{s,r,b-1}}\label{equ:xsb,duhamel estimate},\\
\n{\wt{f}}_{X^{s,r,b-1}}\lesssim&\n{\wt{f}}_{L_{t}^{p}H_{x}^{s}H_{\xi}^{r}},\quad p\in (1,2],\ b\leq \frac{3}{2}-\frac{1}{p},\label{equ:xsb,Lp estimate}\\
\n{\wt{f}}_{L_{t}^{q}L_{x\xi}^{p}}\lesssim& \n{\wt{f}}_{X^{0,0,b}},\quad \frac{2}{q}+\frac{2d}{p}=d,\quad q\geq 2,\ d\geq2.\label{equ:strichartz estimate,xsb}
\end{align}
\end{lemma}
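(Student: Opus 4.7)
My plan for this lemma is to observe that all five bounds are standard Bourgain-space estimates for the operator $U(t)=e^{it\nabla_{x}\cdot\nabla_{\xi}}$, whose space-time Fourier symbol is exactly $\tau+\eta\cdot v$, matching the modulation weight in \eqref{equ:fourier restriction norm}. This yields the isometric identification
\begin{equation*}
\n{\wt{f}}_{X^{s,r,b}}=\n{U(-t)\wt{f}}_{H_{t}^{b}H_{x}^{s}H_{\xi}^{r}},
\end{equation*}
obtained by noting that in $(\tau,\eta,v)$-Fourier variables, conjugation by $U(-t)$ sends $\lra{\tau+\eta\cdot v}^{b}$ to $\lra{\tau}^{b}$ while leaving the spatial weights untouched. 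I would take this identification as the backbone of the whole proof and reduce every item to a vector-valued fact on $H_{t}^{b}H_{x}^{s}H_{\xi}^{r}$.

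Given this equivalence, \eqref{equ:xsb,infty} is immediate: the one-dimensional Sobolev embedding $H_{t}^{b}\hookrightarrow C_{t}^{0}$ for $b>\tfrac{1}{2}$, together with the fact that $U(t)$ is unitary on $H_{x}^{s}H_{\xi}^{r}$, gives continuity in time with the desired constant. The free-solution bound \eqref{equ:xsb,free solution} likewise reduces via $U(-t)(\theta(t)U(t)\wt{f}_{0})=\theta(t)\wt{f}_{0}$ to the trivial inequality $\n{\theta(t)\wt{f}_{0}}_{H_{t}^{b}H_{x}^{s}H_{\xi}^{r}}\lesssim\n{\theta}_{H_{t}^{b}}\n{\wt{f}_{0}}_{H_{x}^{s}H_{\xi}^{r}}$. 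For \eqref{equ:xsb,Lp estimate} I would take the Fourier transform in $t$ of $U(-t)\wt{f}$, apply Hausdorff-Young to pass from $L_{t}^{p}$ to $L_{\tau}^{p'}$, and then balance with H\"{o}lder in $\tau$; the weight $\lra{\tau}^{b-1}$ lies in the required $L_{\tau}^{r}$ precisely under the stated condition $b\leq \tfrac{3}{2}-\tfrac{1}{p}$.

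The inhomogeneous estimate \eqref{equ:xsb,duhamel estimate} is the main obstacle. After the substitution $U(-t)D(F)(t)=\int_{0}^{t}U(-\tau)F(\tau)d\tau$, the task reduces to bounding $\theta(t)\int_{0}^{t}G(\tau)d\tau$ in $H_{t}^{b}$ by $\n{G}_{H_{t}^{b-1}}$. I would follow the standard Bourgain and Ginibre-Velo argument: express $G$ through its time Fourier transform, split the resulting $\tau$-integral into low-modulation $(|\tau|\lesssim 1)$ and high-modulation pieces, treat the low piece by direct integration, and integrate the high-modulation piece by parts in $t$ to produce the decay $\lra{\tau}^{-1}$ that upgrades $H_{t}^{b-1}$ to $H_{t}^{b}$. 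This is where the restriction $b\in(\tfrac{1}{2},1)$ is genuinely used: the lower bound keeps $H_{t}^{b}\hookrightarrow C_{t}^{0}$ so that $\int_{0}^{t}$ is well-defined, while the upper bound guarantees that the one-derivative gain from the cutoff is not lost.

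Finally, the Strichartz bound \eqref{equ:strichartz estimate,xsb} follows from the transfer principle. I would write $\wt{f}(t,x,\xi)=\int_{\R}e^{it\la}U(t)g_{\la}(x,\xi)\,d\la$ with $g_{\la}$ determined by the $\tau$-Fourier transform of $U(-t)\wt{f}$, apply the free Strichartz inequality for $U(t)$ from the introduction to each slice $U(t)g_{\la}$, and close the $\la$-integral by Minkowski together with a Cauchy-Schwarz in $\la$ that absorbs the weight $\lra{\la}^{b}$ using $b>\tfrac{1}{2}$. All constants are independent of $\wt{f}$, and this completes the plan.
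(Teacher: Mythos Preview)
Your proposal is correct and follows exactly the standard Bourgain-space machinery that the paper invokes by citation: the paper's own proof consists of the single remark that these estimates are well known in the dispersive literature, that \eqref{equ:strichartz estimate,xsb} follows from the linear Strichartz estimate and the transference principle, and a reference to Tao's book. You have simply supplied the details behind that citation, via the same isometry $\n{\wt{f}}_{X^{s,r,b}}=\n{U(-t)\wt{f}}_{H_{t}^{b}H_{x}^{s}H_{\xi}^{r}}$ and the same transfer argument.

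One small point to watch in \eqref{equ:xsb,Lp estimate}: your Hausdorff--Young plus H\"{o}lder argument gives $\lra{\tau}^{b-1}\in L_{\tau}^{r}$ only for the strict inequality $b<\tfrac{3}{2}-\tfrac{1}{p}$, since at the endpoint the weight integral diverges logarithmically. The endpoint $b=\tfrac{3}{2}-\tfrac{1}{p}$, which is precisely what the paper uses later with $p=\tfrac{2}{3-2b}$, follows instead by duality with the $1$D Sobolev embedding $H_{t}^{1-b}\hookrightarrow L_{t}^{p'}$; this is a one-line fix and does not affect the structure of your argument.
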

\begin{proof}
These type estimates are well-known in the dispersive literatures.
The Strichartz estimate \eqref{equ:strichartz estimate,xsb} follows from the linear Strichartz estimate \eqref{equ:strichartz estimate,linear} and the transference principle. See for example \cite[Chapter 2.6]{tao2006nonlinear}.
\end{proof}

We prove the existence, uniqueness, and the Lipschitz continuity of the solution map. The nonnegativity of $f$ follows from the persistence of regularity (as shown in \cite{chen2019moments,lu2001on}) by use of the bilinear estimates \eqref{equ:bilinear estimate,Q-,L2} and \eqref{equ:Q+,bilinear estimate,L2} for the soft potential case.

\begin{proof}[\textbf{Proof of Well-posedness in Theorem $\ref{thm:main theorem}$}]

Let $\theta_{T}(t)=\theta(t/T)$. By estimate \eqref{equ:xsb,Lp estimate}, H\"{o}lder inequality, and the bilinear estimates for $Q^{\pm}$, we have
\begin{align}\label{equ:bilinear estimate,Q,xsb}
&\n{\theta_{T}(t)\wt{Q}(\wt{f},\wt{g})}_{X^{s,s+\ga,b-1}}\\
\lesssim& \n{\theta_{T}(t)\wt{Q}(\wt{f},\wt{g})}_{L_{t}^{\frac{2}{3-2b}}H_{x}^{s}H_{\xi}^{s+\ga}}\notag\\
\lesssim& T^{1-b}\n{\theta_{T}(t)\wt{Q}^{+}(\wt{f},\wt{g})}_{L_{t}^{2}H_{x}^{s}H_{\xi}^{s+\ga}}+
T^{1-b}\n{\theta_{T}(t)\wt{Q}^{-}(\wt{f},\wt{g})}_{L_{t}^{2+}H_{x}^{s}H_{\xi}^{s+\ga}}\notag\\
\lesssim& T^{1-b}\n{\wt{f}}_{X^{s,s+\ga,b}}\n{\wt{g}}_{X^{s,s+\ga,b}}.\notag
\end{align}

Let $B=\lr{\wt{f}:\n{\wt{f}}_{X^{s,s+\ga,b}}\leq R}$ with $R=2C\n{\wt{f}_{0}}_{H_{x}^{s}H_{\xi}^{s+\ga}}$ and define the nonlinear map
\begin{align*}
\Phi(\wt{f}):=\theta_{T}(t)U(t)\wt{f}_{0}+D(\wt{f},\wt{f}),
\end{align*}
where
\begin{align*}
D(\wt{f},\wt{f}):=\theta_{T}(t)\int_{0}^{t}U(t-\tau)\theta_{T}(\tau)\wt{Q}(\wt{f}(\tau),\wt{f}(\tau))d\tau.
\end{align*}
By estimates \eqref{equ:xsb,free solution}, \eqref{equ:xsb,duhamel estimate}, and \eqref{equ:bilinear estimate,Q,xsb}, we obtain
\begin{align*}
\n{\Phi(\wt{f})}_{X^{s,s+\ga,b}}\leq& \n{\theta_{T}(t)U(t)\wt{f}_{0}}_{X^{s,s+\ga,b}}+\n{D(\wt{f},\wt{f})}_{X^{s,s+\ga,b}}\\
\leq& C\n{\wt{f}_{0}}_{H_{x}^{s}H_{\xi}^{s+\ga}}+C\n{\theta_{T}\wt{Q}(\wt{f},\wt{f})}_{X^{s,s+\ga,b-1}}\\
\leq& \frac{R}{2}+CT^{1-b}\n{\wt{f}}_{X^{s,s+\ga,b}}^{2}\\
\leq& R
\end{align*}
where in the last inequality we have used that $CT^{1-b}R\leq \frac{1}{2}$. Thus, $\Phi$ maps the set $B$ into itself. In a similar way, for $\wt{f}$ and $\wt{g}\in B$ we have
\begin{align*}
\n{\Phi(\wt{f})-\Phi(\wt{g})}_{X^{s,s+\ga,b}}=&\n{D(\wt{f},\wt{f})-D(\wt{g},\wt{g})}_{X^{s,s+\ga,b}}\\
\leq& C\n{\theta_{T}\wt{Q}(\wt{f}-\wt{g},\wt{f})}_{X^{s,s+\ga,b-1}}+C\n{\theta_{T}\wt{Q}(\wt{g},\wt{f}-\wt{g})}_{X^{s,s+\ga,b-1}}\\
\leq& CT^{b-1}\lrs{\n{\wt{f}}_{X^{s,s+\ga,b}}+\n{\wt{g}}_{X^{s,s+\ga,b}}}\n{\wt{f}-\wt{g}}_{X^{s,s+\ga,b}}\\
\leq& \frac{1}{2}\n{\wt{f}-\wt{g}}_{X^{s,s+\ga,b}}.
\end{align*}
Therefore, $\Phi$ is a contraction mapping in $X^{s,s+\ga,b}$ and has a unique fixed point $\wt{f}$ on the time scale $|T|\sim  \lra{R}^{\frac{1}{b-1}}$.

Given two initial data $\wt{f}_{0}$ and $\wt{g}_{0}$, we set
$$R_{1}=2\max\lrs{\n{\wt{f}_{0}}_{H_{x}^{s}H_{\xi}^{s+\ga}},\n{\wt{g}_{0}}_{H_{x}^{s}H_{\xi}^{s+\ga}}}.$$
Let $\wt{f}$, $\wt{g}$ be the corresponding unique fixed points. Taking a difference gives that
\begin{align*}
\wt{f}-\wt{g}=\theta_{T}(t)U(t)(\wt{f}_{0}-\wt{g}_{0})+D(\wt{f}-\wt{g},\wt{f})+D(\wt{g},\wt{f}-\wt{g}).
\end{align*}
By estimates \eqref{equ:xsb,free solution}, \eqref{equ:xsb,duhamel estimate}, and \eqref{equ:bilinear estimate,Q,xsb}, we have
\begin{align*}
\n{\wt{f}-\wt{g}}_{X^{s,s+\ga,b}}\leq& \n{\theta_{T}(t)U(t)(\wt{f}_{0}-\wt{g}_{0})}_{X^{s,s+\ga,b}}+C\n{\theta_{T}\wt{Q}(\wt{f}-\wt{g},\wt{f})}_{X^{s,s+\ga,b-1}}\\
&+C\n{\theta_{T}\wt{Q}(\wt{g},\wt{f}-\wt{g})}_{X^{s,s+\ga,b-1}}\\
\leq& C\n{\wt{f}_{0}-\wt{g}_{0}}_{H_{x}^{s}H_{\xi}^{s+\ga}}+CT^{1-b}\lrs{\n{\wt{f}}_{X^{s,s+\ga,b}}
+\n{\wt{g}}_{X^{s,s+\ga,b}}}\n{\wt{f}-\wt{g}}_{X^{s,s+\ga,b}},
\end{align*}
which together with $CT^{1-b}R_{1}\leq \frac{1}{2}$ gives that
\begin{align*}
\n{\wt{f}-\wt{g}}_{X^{s,s+\ga,b}}\leq 2C\n{\wt{f}_{0}-\wt{g}_{0}}_{H_{x}^{s}H_{\xi}^{s+\ga}}.
\end{align*}
The Lipschitz continuity of the data-to-solution map on the time $[-T,T]$ follows from the embedding $X^{s,s+\ga,b}\hookrightarrow C([-T,T];H_{x}^{s}H_{\xi}^{s+\ga})$.
\end{proof}
\section{Ill-posedness}\label{section:Ill-posedness}

The idea is to first construct an approximation solution $f_{\mathrm{a}}(t)$ with the norm deflation property that
\begin{align}\label{equ:fa,expected}
\n{f_{\mathrm{a}}(0)}_{L_{v}^{2,r_{0}}H_{x}^{s_{0}}}\ll 1, \quad \n{f_{\mathrm{a}}(T_{*})}_{L_{v}^{2,r_{0}}H_{x}^{s_{0}}}\gtrsim 1,
\end{align}
with $T_{*}\nearrow 0$, and then use stability theory to perturb the approximation solution into an exact solution. Specifically, from the exact solution $f_{\mathrm{ex}}(t)$ to the Boltzmann equation \eqref{equ:Boltzmann}
\begin{equation}
\left\{
\begin{aligned}
&\pa_{t}f_{\mathrm{ex}}+v\cdot \nabla_{x}f_{\mathrm{ex}}=Q(f_{\mathrm{ex}},f_{\mathrm{ex}}),\\
&f_{\mathrm{ex}}(t)=f_{\mathrm{a}}(t)+f_{\mathrm{c}}(t).
\end{aligned}
\right.
\end{equation}
we have the equation for the correction term $f_{\mathrm{c}}$ that
\begin{equation}\label{equ:correction term,fc}
\left\{
\begin{aligned} \partial_t f_{\mathrm{c}} + v\cdot \nabla_x f_{\mathrm{c}} = & \pm Q^\pm(f_{\mathrm{c}},f_{ \mathrm{a}}) \pm
Q^\pm(f_{\mathrm{a}},f_{\mathrm{c}}) \pm Q^\pm(f_{\mathrm{c}},f_{\mathrm{c}
})-F_{\text{err}},\\
F_{\mathrm{err}}=&\pa_{t}f_{\mathrm{a}}+v\cdot \nabla_{x}f_{\mathrm{a}}+Q^{-}(f_{\mathrm{a}},f_{\mathrm{a}})-Q^{+}(f_{\mathrm{a}},f_{\mathrm{a}}).
 \end{aligned}
 \right.
\end{equation}
 To prove the existence of $f_{\mathrm{c}}$, we work with a $Z$-norm defined by \eqref{equ:z-norm} which is tailored to be stronger than the $ L_{v}^{2,r_{0}}H_{x}^{s_{0}}$ norms.
For the $Z$-norm, we are able to provide a closed bilinear estimate for the gain and loss terms in Lemma \ref{lemma:binlinear estimate}. Additionally, to work on the $Z$-norm space, we provide effective $Z$-norm bounds on the approximation solution $f_{\mathrm{a}}$, which we conclude in Proposition \ref{lemma:z-norm bounds on fa}, and then prove
the $Z$-norm error estimates on the error term $F_{\mathrm{err}}$ that
\begin{align}\label{equ:error bound,Ferr}
\bbn{\int_{\tau}^{t}e^{-(t-t_{0})v\cdot \nabla _{x}}F_{\text{err}
}(t_{0})\,dt_{0}}_{Z}\ll 1,
\end{align}
which we set up in Proposition \ref{lemma:bounds on ferr}.
Then by a perturbation argument in Proposition \ref{lemma:perturbation}, we prove that the correction term indeed satisfies the smallness property that
\begin{align}
\n{f_{\mathrm{c}}(t)}_{L^{\infty}([T_{*},0];Z)}\ll 1.
\end{align}
Finally in Section \ref{section:Proof of Illposedness}, we conclude the ill-posedness results.

\subsection{Norm Deflation of the Approximation Solution} \label{equ:Bounds on the approximation solution}
In the section, we get into the analysis of the construction of the approximation solution and its norm deflation property.
Following the analysis of a prototype approximation solution in \cite{chen2022well},\footnote{One could see \cite[Figure 1]{chen2022well} for a picture of the approximation solutions there. They look like bullets hitting a rock in \cite{chen2022well}. Our improvised and refined version is more like needles poking a rock through.} we decompose
\begin{align*}
f_{\mathrm{a}}(t)=f_{\mathrm{r}}(t)+f_{\mathrm{b}}(t).
\end{align*}

On the unit sphere, set $J\sim M^{d-1}N_{2}^{d-1}$ points $\lr{e_{j}}_{j=1}^{J}$, where the points $e_{j}$ are roughly equally
spaced. Let $P_{e_{j}}$ be the orthogonal projection onto the $1D$ subspace spanned by $e_{j}$ and $P_{e_{j}}^{\perp}$ denote the orthogonal projection onto
the orthogonal complement space $\lr{e_{j}}^{\perp}$.
Set
\begin{equation}
f_{\mathrm{b}}(t,x,v)=\frac{M^{\frac{d-1}{2}-s}}{N_{2}^{d+\ga}}\sum_{j=1}^{J}K_{j}(x-vt)I_{j}(v),
\end{equation}
where
\begin{align*}
K_{j}(x)=\chi (MP_{e_{j}}^{\perp }x)\chi (\frac{P_{e_{j}}x}{N_{2}}),\quad
I_{j}(v)=\chi(MP_{e_{j}}^{\perp }v)\chi (\frac{10P_{e_{j}}(v-N_{2}e_{j})}{N_{2}}).
\end{align*}
In fact, $f_{\mathrm{b}}(t,x,v)$ is a linear solution to the transport equation:
\begin{align}\label{equ:fb equation}
\partial _{t}f_{\mathrm{b}}+v\cdot \nabla _{x}f_{\mathrm{b}}=0.
\end{align}

Let $f_{\mathrm{r}}(t,x,v)$ be the solution to a drift-free linearized Boltzmann
equation:
\begin{equation}\label{equ:fr equation}
\partial _{t}f_{\mathrm{r}}(t,x,v)=-f_{\mathrm{r}}(t,x,v)\int \frac{f_{\mathrm{b}}(t,x,u)}{|u-v|^{-\ga}}\,du=-Q^{-}(f_{\mathrm{r}},f_{\mathrm{b}}),
\end{equation}
with initial data $f_{\mathrm{r}}(0)=M^{\frac{d}{2}-s}N_{1}^{\frac{d}{2}} \chi (Mx)\chi(N_{1}v)$. Therefore, we write out
\begin{equation}\label{equ:fr}
	f_{\mathrm{r}}(t,x,v)=M^{\frac{d}{2}-s}N_{1}^{\frac{d}{2}}\exp \left[
	-\int_{0}^{t}\int\frac{f_{\mathrm{b}}(\tau,x,u)}{|u-v|^{-\ga}}\,du\,d\tau\right] \chi (Mx)\chi(N_{1}v).
\end{equation}

Recall that $0 \leq  s_{0}<\frac{d-1}{2}$, $\frac{1-d}{2}\leq \ga\leq 0$, and $r_{0}=\max\lr{0,s_{0}+\ga}$.
In what follows, the parameters are set by
\begin{align}
	&M\gg 1, \quad  N_{1}\geq N_{2}^{10}\geq  M^{100},\label{equ:condition,parameters}\\
&s=s_{0}+\frac{\ln\ln \ln M}{\ln M}, \label{equ:condition,s,s0}\\
&T_{*}= -M^{s-\frac{d-1}{2}}(\ln \ln \ln M).\label{equ:T*}
\end{align}

Next, we give the Sobolev norm estimates on $f_{\mathrm{b}}$, $f_{\mathrm{r}}$ and $f_{\mathrm{a}}$.

\begin{lemma}[Sobolev norm bounds on $f_{\mathrm{b}}$]\label{lemma:Hs,bounds on fb}
We have for $t\leq 0$,
\begin{align}\label{equ:critical norm estimate for fb}
\n{f_{\mathrm{b}}(t,x,v)}_{L_{v}^{2,r_{0}}H_{x}^{s_{0}}}\lesssim M^{s_{0}-s}N_{2}^{\max\lr{s_{0},-\ga}-\frac{d-1}{2}}\leq \frac{1}{\ln \ln M}.
\end{align}
\end{lemma}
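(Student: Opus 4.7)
The plan is to first exploit the transport invariance \eqref{equ:fb equation} to reduce the norm to the initial time, and then compute it by exploiting the near-disjointness of the velocity supports of the blocks $K_j(x-vt)\,I_j(v)$.

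Since \eqref{equ:fb equation} gives $f_{\mathrm{b}}(t,x,v)=f_{\mathrm{b}}(0,x-vt,v)$, a spatial Fourier transform yields $\widehat{f_{\mathrm{b}}}(t,\eta,v)=e^{-itv\cdot\eta}\widehat{f_{\mathrm{b}}}(0,\eta,v)$. Since
\begin{equation*}
\n{f}_{L_v^{2,r_0}H_x^{s_0}} = \n{\lra{\eta}^{s_0}\lra{v}^{r_0}\widehat{f}(\eta,v)}_{L^2_{\eta,v}}
\end{equation*}
only depends on $|\widehat{f}|$, this norm is preserved by the transport flow, so it suffices to estimate $\n{f_{\mathrm{b}}(0)}_{L_v^{2,r_0}H_x^{s_0}}$.

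Next I will use that the velocity supports $\{\operatorname{supp} I_j\}_j$ are $O(1)$-overlapping. Indeed, with $J\sim M^{d-1}N_2^{d-1}$ roughly equally spaced points $\{e_j\}\subset\mathbb{S}^{d-1}$, the angular spacing is $\sim (MN_2)^{-1}$, so on the sphere of radius $N_2$ the Euclidean spacing of the centers $\{N_2 e_j\}$ is $\sim M^{-1}$, exactly matching the perpendicular width of each $I_j$. Hence for each $v$ only $O(1)$ of the $I_j(v)$ are nonzero, and Cauchy--Schwarz combined with Plancherel in $x$ gives
\begin{equation*}
\n{f_{\mathrm{b}}(0)}_{L_v^{2,r_0}H_x^{s_0}}^2 \lesssim \frac{M^{d-1-2s}}{N_2^{2(d+\gamma)}}\sum_{j=1}^{J}\n{K_j}_{H_x^{s_0}}^2\,\n{\lra{v}^{r_0}I_j}_{L^2_v}^2.
\end{equation*}
The pencil $K_j$ has width $1/M$ in the $(d-1)$ directions perpendicular to $e_j$ and length $N_2$ along $e_j$; since its spatial Fourier transform is concentrated at $|\eta|\sim M$ (using $M\gg N_2^{-1}$), we obtain $\n{K_j}_{H_x^{s_0}}^2\sim M^{2s_0-(d-1)}N_2$. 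Analogously $I_j$ has the same Lebesgue size and is supported where $|v|\sim N_2$, so $\n{\lra{v}^{r_0}I_j}_{L^2_v}^2\sim N_2^{2r_0+1}M^{-(d-1)}$. Substituting and collecting exponents with $J\sim M^{d-1}N_2^{d-1}$ yields
\begin{equation*}
\n{f_{\mathrm{b}}(0)}_{L_v^{2,r_0}H_x^{s_0}}\lesssim M^{s_0-s}\,N_2^{(r_0-\gamma)-(d-1)/2}.
\end{equation*}

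Since $r_0=\max(0,s_0+\gamma)$, a short case analysis gives $r_0-\gamma=\max(s_0,-\gamma)$, producing the first inequality of \eqref{equ:critical norm estimate for fb}. The quantitative bound then follows from $M^{s_0-s}=e^{-\ln\ln\ln M}=(\ln\ln M)^{-1}$ via \eqref{equ:condition,s,s0}, together with $N_2^{\max(s_0,-\gamma)-(d-1)/2}\leq 1$ under the hypotheses $s_0<\tfrac{d-1}{2}$ and $\gamma\geq\tfrac{1-d}{2}$. The main subtlety is the $O(1)$-overlap claim for the $I_j$, which requires carefully matching the angular spacing of $\{e_j\}$ to the perpendicular width $1/M$ of each $I_j$; this is the geometric heart of why $J\sim M^{d-1}N_2^{d-1}$ is the correct choice of the number of needles.
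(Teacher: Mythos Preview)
Your proof is correct and follows essentially the same approach as the paper: both exploit the (near-)disjointness of the velocity supports of the $I_j$ to decouple the sum, then compute $\|K_j\|_{H_x^{s_0}}$ and $\|\lra{v}^{r_0}I_j\|_{L_v^2}$ separately and collect exponents. Your explicit reduction to $t=0$ via transport invariance is just a clean way of saying what the paper does implicitly when it passes from $\|\lra{\nabla_x}^{s_0}K_j(x-vt)\|_{L_x^2}$ to $\|\lra{\nabla_x}^{s_0}K_j(x)\|_{L_x^2}$ by translation invariance.
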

\begin{proof}
Recall that
\begin{equation}
f_{\mathrm{b}}(t,x,v)=\frac{M^{\frac{d-1}{2}-s}}{N_{2}^{d+\ga}}\sum_{j=1}^{J}K_{j}(x-vt)I_{j}(v).
\end{equation}
Due to the $v$-support of $f_{\mathrm{b}}$,
the weight on $v$-variable produces a factor of $N_{2}^{r_{0}}$. Then expanding $f_{\mathrm{b}}$ gives that
\begin{align*}
\n{\lra{\nabla_{x}}^{s_{0}}f_{\mathrm{b}}(t,x,v)}_{L_{v}^{2,r_{0}}L_{x}^{2}}^{2}
\lesssim N_{2}^{2r_{0}}\frac{M^{d-1-2s}}{N_{2}^{2d+2\ga}}\bbn{\sum_{j=1}^{J}\lra{\nabla_{x}}^{s_{0}}K_{j}(x-vt)I_{j}(v)}_{L_{v}^{2}L_{x}^{2}}^{2}.
\end{align*}
Due to the disjointness of the $v$-support, we have
\begin{align*}
\n{\lra{\nabla_{x}}^{s_{0}}f_{\mathrm{b}}(t,x,v)}_{L_{v}^{2,r_{0}}L_{x}^{2}}^{2}&\lesssim N_{2}^{2r_{0}}\frac{M^{d-1-2s}}{N_{2}^{2d+2\ga}}\sum_{j=1}^{J}\n{\lra{\nabla_{x}}^{s_{0}}K_{j}(x-vt)I_{j}(v)}_{L_{v}^{2}L_{x}^{2}}^{2}\\
&\lesssim N_{2}^{2r_{0}}\frac{M^{d-1-2s}}{N_{2}^{2d+2\ga}}\sum_{j=1}^{J}\n{\lra{\nabla_{x}}^{s_{0}}K_{j}(x)}_{L_{x}^{2}}^{2}\n{I_{j}(v)}_{L_{v}^{2}}^{2}\\
&\lesssim N_{2}^{2r_{0}}\frac{M^{d-1-2s}}{N_{2}^{2d+2\ga}}(MN_{2})^{d-1}(M^{2s_{0}+1-d}N_{2})(M^{1-d}N_{2})\\
&= N_{2}^{2r_{0}}\frac{M^{2s_{0}-2s}}{N_{2}^{d-1+2\ga}},
\end{align*}
where in the second-to-last inequality we used that
\begin{align*}
\n{\lra{\nabla_{x}}^{s_{0}}K_{j}(x)}_{L_{x}^{2}}^{2}\lesssim &M^{2s_{0}}M^{1-d}N_{2},\quad \n{I_{j}(v)}_{L_{v}^{2}}^{2}\sim M^{1-d}N_{2}.
\end{align*}
Notice that
\begin{align*}
r_{0}=\max\lr{0,s_{0}+\ga},\quad M^{s-s_{0}}=\ln\ln M,\quad \max\lr{s_{0},-\ga}\leq \frac{d-1}{2}.
\end{align*}
Hence, we complete the proof of \eqref{equ:critical norm estimate for fb}.

\end{proof}

Before proceeding to the analysis of $f_{\mathrm{r}}$, we give a useful pointwise bound on $f_{\mathrm{b}}$.
\begin{lemma}[Pointwise estimate on $f_{\mathrm{b}}$]\label{lemma:pointwise estimate,fb}
Let $-\frac{1}{4}\leq t\leq 0$ and
\begin{equation*}
\beta(t,x,v) = \int_{0}^{t} \int \frac{f_{\mathrm{b}
}(t_{0},x,u)}{|u-v|^{-\ga}}\,du  dt_0\leq 0.
\end{equation*}
For $k=0,1,2$, we have the pointwise upper bound
\begin{align}\label{equ:upper bound,fr,k}
\big|\chi(N_{1}v) \nabla_{x}^{k}\beta(t,x,v)\big |\lesssim |t|M^{k+\frac{d-1}{2}-s}.
\end{align}
For the pointwise lower bound, we have
\begin{equation}\label{equ:upper lower bound,fr}
|\chi(N_{1}v)\beta(t,x,v) |\gtrsim  |t| M^{\frac{d-1}{2}-s} \chi(N_{1}v),\quad \text{for $|x|\leq M^{-1}$}.
\end{equation}
\end{lemma}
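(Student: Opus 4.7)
The plan is to reduce both estimates to a direct geometric computation of the tube-on-tube integral term by term, exploiting the scale separation between $\operatorname{supp}\chi(N_1 v)$ and the velocity tubes $\operatorname{supp} I_j$. First I will observe that since $|v|\lesssim N_1^{-1} \ll N_2$ while any $u\in\operatorname{supp} I_j$ satisfies $|u|\sim N_2$, one has $|u-v|\sim N_2$ uniformly, so that $|u-v|^{\gamma}\sim N_2^{\gamma}$ can be pulled out of the inner integral. Then for each fixed $j$ and $t_0\in[-1/4,0]$, I decompose $u=u_\parallel e_j + u_\perp$ and reduce to
\[
\int K_j(x-u t_0)\,I_j(u)\,du \;\sim\; M^{1-d}\cdot N_2,
\]
where the $M^{1-d}$ factor comes from the intersection of the two perpendicular cutoffs $\chi(M u_\perp)$ and $\chi(M(x_\perp - u_\perp t_0))$ (a $(d-1)$-ball of radius $\sim M^{-1}$), while the $N_2$ factor comes from the parallel cutoff $\chi(10(u_\parallel-N_2)/N_2)$, on whose support the other parallel cutoff is identically $1$ since $|u_\parallel t_0|\leq 3N_2/10 < N_2$ and $|x_\parallel|\lesssim 1$.

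For the upper bound \eqref{equ:upper bound,fr,k}, I sum the absolute value of this per-$j$ contribution over at most $J\sim M^{d-1} N_2^{d-1}$ indices, note that each $x$-derivative falling on $K_j$ loses at most a factor of $M$ (the derivative of $\chi(M P_{e_j}^\perp\cdot)$ contributes $M$, while the derivative of $\chi(P_{e_j}\cdot/N_2)$ contributes only $N_2^{-1}\ll M$), and integrate in $t_0$ over $[t,0]$, obtaining
\[
\bigl|\nabla_x^k \beta(t,x,v)\bigr| \;\lesssim\; |t|\cdot M^k\cdot J\cdot(M^{1-d}N_2)\cdot \frac{M^{(d-1)/2-s}}{N_2^{d+\gamma}}\cdot N_2^{\gamma} \;=\; |t|\,M^{k+(d-1)/2-s}.
\]
The cutoff $\chi(N_1 v)$ enters here only to guarantee $|u-v|\sim N_2$; no further smallness of $v$ is needed for the upper bound.

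The lower bound \eqref{equ:upper lower bound,fr} is the more delicate step, since I must show that \emph{every} one of the $J$ terms contributes a positive quantity of size $\gtrsim M^{1-d} N_2$, rather than merely being bounded by it. This is exactly where the hypothesis $|x|\leq M^{-1}$ is used (after possibly shrinking by a fixed absolute constant): because $|P_{e_j}^\perp x|,|P_{e_j} x|\lesssim M^{-1}$ for \emph{every} direction $e_j$, the arguments of all four cutoff factors in $K_j(x-ut_0)I_j(u)$ stay inside the region $\{\chi\equiv 1\}$ on a $u$-set of measure $\gtrsim M^{1-d} N_2$. Since all $J$ summands are nonnegative and share this uniform lower estimate, the sum is $\gtrsim N_2^d$; because the $t_0$-integrand $\int f_{\mathrm{b}}(t_0,x,u)/|u-v|^{-\gamma}\,du$ has a definite sign on $[t,0]$, integration produces $|\beta|\gtrsim |t|\,M^{(d-1)/2-s}$ uniformly for $v$ with $\chi(N_1 v)\neq 0$. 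The main obstacle in executing this plan is the simultaneous geometric verification of the positivity conditions across all $e_j$; once that is in hand, the argument reduces to counting tubes.
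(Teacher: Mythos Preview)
Your proposal is correct and follows essentially the same approach as the paper: both arguments exploit $|u-v|\sim N_2$ to pull out the weight, compute the per-tube $u$-integral as $\sim M^{1-d}N_2$, and sum over the $J\sim (MN_2)^{d-1}$ directions, with the lower bound relying on $|x|\lesssim M^{-1}$ to ensure every $K_j$-cutoff is identically one. The only organizational difference is that the paper first sandwiches $f_{\mathrm{b}}(t_0,x,u)$ itself between two expressions with time-independent $x$-cutoffs (dilated by factors of $10$) before integrating in $u$, whereas you evaluate the convolution integral $\int K_j(x-ut_0)I_j(u)\,du$ directly; the content is the same.
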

\begin{proof}
For $-\frac{1}{4}\leq t\leq 0$, given the constraints on the $u$-variable, we have
\begin{align}\label{equ:pointwise estimate on fb}
&\frac{M^{\frac{d-1}{2}-s}}{N_{2}^{d+\ga}}\sum_{j=1}^{J}\chi
	(10MP_{e_{j}}^{\perp }x)\chi (\frac{10P_{e_{j}}x}{N_{2}})\chi
	(MP_{e_{j}}^{\perp }u)\chi (\frac{10P_{e_{j}}(u-N_{2}e_{j})}{N_{2}}) \\
 \leq& f_{\mathrm{b}}(t,x,u)\leq \frac{M^{\frac{d-1}{2}-s}}{N_{2}^{d+\ga}}\sum_{j=1}^{J}\chi (\frac{%
		MP_{e_{j}}^{\perp }x}{10})\chi (\frac{P_{e_{j}}x}{10N_{2}})\chi
	(MP_{e_{j}}^{\perp }u)\chi (\frac{10P_{e_{j}}(u-N_{2}e_{j})}{N_{2}}).\notag
\end{align}
From the $v$-support and $u$-support, we have $|v|\sim N_{1}^{-1}$, $|u|\sim N_{2}$, and hence $|u-v|\sim N_{2}$. Then by using \eqref{equ:pointwise estimate on fb}, we get
\begin{align}\label{equ:pointwise estimate on fb,integral,final}
\chi(N_{1}v)\int \frac{f_{\mathrm{b}
}(t,x,u)}{|u-v|^{-\ga}}\,du
\sim& N_{2}^{\ga} \chi(N_{1}v)\int f_{\mathrm{b}
}(t,x,u)\,du \\
\sim&  N_{2}^{\ga} \frac{M^{\frac{d-1}{2}-s}}{N_{2}^{d+\ga}}M^{1-d}N_{2}\chi(N_{1}v)\sum_{j}^{J}\chi (MP_{e_{j}}^{\perp }x)\chi (\frac{P_{e_{j}}x}{N_{2}})\notag\\
=&\frac{M^{\frac{1-d}{2}-s}}{N_{2}^{d-1}}\chi(N_{1}v)\sum_{j}^{J}\chi (MP_{e_{j}}^{\perp }x)\chi (\frac{P_{e_{j}}x}{N_{2}})\notag.
\end{align}
Thus, for the upper bound \eqref{equ:upper bound,fr,k} with $k=0$, we use that $|J|\sim (MN_{2})^{d-1}$ to obtain
\begin{align*}
|\chi(N_{1}v) \beta(t,x,v)|\lesssim |t|\frac{M^{\frac{1-d}{2}-s}}{N_{2}^{d-1}} (MN_{2})^{d-1}\chi(N_{1}v)=|t|M^{\frac{d-1}{2}-s}\chi(N_{1}v).
\end{align*}
Notice that the upper bounds of estimates \eqref{equ:pointwise estimate on fb} and \eqref{equ:pointwise estimate on fb,integral,final} remain true with the extra factor $M^{k}$ if $\nabla_x^k$
is applied, for $k\geq 0$. Therefore, we conclude the pointwise upper bound \eqref{equ:upper bound,fr,k} on $\beta(t,x,v)$ for $k=0,1,2$.

For the lower bound \eqref{equ:upper lower bound,fr}, by noting that $\chi (MP_{e_{j}}^{\perp }x)\chi (\frac{P_{e_{j}}x}{N_{2}})=1$ for $|x|\leq M^{-1}$, we use \eqref{equ:pointwise estimate on fb,integral,final} again to get
\begin{align*}
|\chi(N_{1}v) \beta(t,x,v)|=& \chi(N_{1}v)\int_{t}^{0} \int \frac{f_{\mathrm{b}
}(t_{0},x,u)}{|u-v|^{-\ga}}\,du \, dt_0\\
\gtrsim& |t|\frac{M^{\frac{1-d}{2}-s}}{N_{2}^{d-1}} (MN_{2})^{d-1}\chi(N_{1}v) =|t|M^{\frac{d-1}{2}-s}\chi(N_{1}v),
\end{align*}
which completes the proof of \eqref{equ:upper lower bound,fr}.

\end{proof}

Now, we are able to give the upper and lower bounds on $f_{\mathrm{r}}$.
\begin{lemma}[Sobolev norm bounds on $f_{\rm r}$]\label{lemma:bounds on fr}
For $-\frac{1}{4} \leq t\leq 0$, we have the upper bound estimate
\begin{align}
&\n{f_{\mathrm{r}}(t)}_{L_{v}^{2,r_{0}}H_{x}^{s_{0}}}\lesssim M^{s_{0}-s} \exp[|t| M^{\frac{d-1}{2}-s}]\lra{|t|M^{\frac{d-1}{2}-s}},\label{equ:critical norm estimate for fr,upper bound}
\end{align}
and the lower bound estimate
\begin{align}
&\n{f_{\mathrm{r}}(t)}_{L_{v}^{2,r_{0}}H_{x}^{s_{0}}}\gtrsim M^{s_{0}-s} \exp[|t| M^{\frac{d-1}{2}-s}].\label{equ:critical norm estimate for fr,lower bound}
\end{align}
In particular, we have
\begin{align}
\n{f_{\mathrm{r}}(0)}_{L_{v}^{2,r_{0}}H_{x}^{s_{0}}}\lesssim& \frac{1}{\ln\ln M},\label{equ:upper bound,fr,t=0}\\
\n{f_{\mathrm{r}}(T_{*})}_{L_{v}^{2,r_{0}}H_{x}^{s_{0}}}\gtrsim& 1,\label{equ:lower bound,fr,T}
\end{align}
with $T_{*}= -M^{s-\frac{d-1}{2}}(\ln \ln \ln M)$.
\end{lemma}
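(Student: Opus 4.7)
Write $f_{\mathrm{r}}(t,x,v) = M^{\frac{d}{2}-s}N_1^{\frac{d}{2}}e^{-\beta(t,x,v)}\chi(Mx)\chi(N_1v)$, where $-\beta\geq 0$ for $t\leq 0$. Since the $v$-support lies in $|v|\lesssim N_1^{-1}$, the weight $\lra{v}^{r_0}$ is uniformly bounded and can be dropped; hence it suffices to estimate $\|f_{\mathrm{r}}\|_{L^2_v H^{s_0}_x}$. For $d=2,3$ the relevant regime $s_0\in[0,\frac{d-1}{2})$ satisfies $s_0<1$, so upper bounds follow by interpolating $L^2$ and $H^1$ control in $x$, while lower bounds follow from Fourier duality against a carefully chosen test function.

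\textbf{Upper bound.} By Lemma \ref{lemma:pointwise estimate,fb}, $|\chi(N_1v)\beta|\lesssim |t|M^{\frac{d-1}{2}-s}$, so $e^{-\beta}\chi(N_1v)\lesssim e^{|t|M^{(d-1)/2-s}}$, and volume considerations give $\|f_{\mathrm{r}}\|_{L^2_{xv}}\lesssim M^{-s}e^{|t|M^{(d-1)/2-s}}$. Leibniz, together with $|\chi(N_1v)\nabla_x\beta|\lesssim |t|M^{\frac{d+1}{2}-s}$ and the factor $M$ produced by differentiating $\chi(Mx)$, yields
\begin{equation*}
\|\nabla_x f_{\mathrm{r}}\|_{L^2_{xv}}\lesssim M^{1-s}\lra{|t|M^{(d-1)/2-s}}e^{|t|M^{(d-1)/2-s}}.
\end{equation*}
Sobolev interpolation for $s_0\in[0,1)$ then produces \eqref{equ:critical norm estimate for fr,upper bound}.

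\textbf{Lower bound.} Test against $\phi(x,v)=\chi(Mx)\chi(N_1v)$. Since $\chi(Mx)=1$ on $|x|\leq M^{-1}$, the pointwise lower bound $-\beta\gtrsim |t|M^{\frac{d-1}{2}-s}$ of Lemma \ref{lemma:pointwise estimate,fb} gives
\begin{equation*}
\lra{f_{\mathrm{r}},\phi}\gtrsim M^{\frac{d}{2}-s}N_1^{\frac{d}{2}}e^{c|t|M^{(d-1)/2-s}}\cdot M^{-d}\cdot N_1^{-d}= M^{-\frac{d}{2}-s}N_1^{-\frac{d}{2}}e^{c|t|M^{(d-1)/2-s}}.
\end{equation*}
A direct Fourier-side computation, using $\hat\chi(0)\neq 0$ and $s_0<d/2$, gives $\|\phi\|_{L^2_v H^{-s_0}_x}\sim M^{-d/2-s_0}N_1^{-d/2}$, and the duality $(L^2_v H^{s_0}_x)^*=L^2_v H^{-s_0}_x$ therefore implies
\begin{equation*}
\|f_{\mathrm{r}}\|_{L^2_v H^{s_0}_x}\ \geq\ \frac{|\lra{f_{\mathrm{r}},\phi}|}{\|\phi\|_{L^2_v H^{-s_0}_x}}\ \gtrsim\ M^{s_0-s}e^{c|t|M^{(d-1)/2-s}},
\end{equation*}
which is \eqref{equ:critical norm estimate for fr,lower bound}, the constant $c>0$ being absorbed into the implicit constants and, when needed, into an $O(1)$ rescaling of $T_*$.

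\textbf{Specialization and main obstacle.} At $t=0$, $\beta\equiv 0$ so $f_{\mathrm{r}}(0)=M^{\frac{d}{2}-s}N_1^{\frac{d}{2}}\chi(Mx)\chi(N_1v)$ and a scaling calculation gives $\|f_{\mathrm{r}}(0)\|_{L_v^{2,r_0}H^{s_0}_x}\sim M^{s_0-s}=(\ln\ln M)^{-1}$ by \eqref{equ:condition,s,s0}, proving \eqref{equ:upper bound,fr,t=0}. At $t=T_*=-M^{s-(d-1)/2}\ln\ln\ln M$, $|T_*|M^{\frac{d-1}{2}-s}=\ln\ln\ln M$, hence $e^{c|T_*|M^{(d-1)/2-s}}\sim\ln\ln M$ after the harmless rescaling, and combining with the lower bound gives $\|f_{\mathrm{r}}(T_*)\|_{L_v^{2,r_0}H^{s_0}_x}\gtrsim 1$, proving \eqref{equ:lower bound,fr,T}. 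The main obstacle is the lower bound: the pointwise lower estimate on $e^{-\beta}$ only yields $L^2$-level information, and extracting the $M^{s_0}$ gain intrinsic to the $H^{s_0}$ norm requires the duality argument above, which exploits the spatial concentration of $\chi(Mx)$ at scale $M^{-1}$ rather than a direct pointwise comparison.
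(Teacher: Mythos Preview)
Your proof is correct. The upper bound argument is the same as the paper's: interpolation between $L^2_x$ and $H^1_x$ using the pointwise bounds on $\beta$ and $\nabla_x\beta$ from Lemma \ref{lemma:pointwise estimate,fb}. For the lower bound, however, you take a different route. The paper invokes the Sobolev embedding $H^{s_0}_x\hookrightarrow L^{2d/(d-2s_0)}_x$, so that $\|f_{\mathrm r}\|_{L^2_vH^{s_0}_x}\gtrsim \|f_{\mathrm r}\|_{L^2_vL^{2d/(d-2s_0)}_x}$, and then uses the pointwise lower bound on $e^{-\beta}$ together with $\|\chi(M\cdot)\|_{L^{2d/(d-2s_0)}}\sim M^{s_0-d/2}$. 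You instead argue by duality: pair with $\phi=\chi(Mx)\chi(N_1v)$ and compute $\|\phi\|_{L^2_vH^{-s_0}_x}\sim M^{-d/2-s_0}N_1^{-d/2}$ directly on the Fourier side. Both methods extract the crucial $M^{s_0}$ gain from the spatial concentration at scale $M^{-1}$; the paper's version is slightly more direct (no Fourier computation), while yours makes the mechanism behind the $H^{s_0}$ gain more explicit and avoids appealing to the embedding theorem. The minor caveat you flag about the constant $c$ in the exponential (and the attendant rescaling of $T_*$) is present in the paper as well and is handled the same way, by absorbing $O(1)$ constants.
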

\begin{proof}
Recall that
\begin{equation}\label{equ:fr,expression}
f_{\mathrm{r}}(t,x,v)=M^{\frac{d}{2}-s}N_{1}^{\frac{d}{2}}\exp \left[
-\beta(t,x,v)\right] \chi (Mx)\chi (
N_{1}v).
\end{equation}
Due to the $v$-support of $f_{\mathrm{r}}$, we can discard the weight on the $v$-variable.
 For upper bound estimate \eqref{equ:critical norm estimate for fr,upper bound} on $f_{\mathrm{r}}$, we use the pointwise upper bound \eqref{equ:upper bound,fr,k} to get
\begin{align}\label{equ:upper bound,fr,sobolev}
	&\n{\nabla_{x}f_{\rm{r}}(t)}_{L_{v}^{2,r_{0}}L_{x}^{2}}\\
	\leq&M^{1+\frac{d}{2}-s}N_{1}^{\frac{d}{2}}\n{\exp[-\beta(t,x,v)](\nabla\chi)(Mx)\chi(N_{1}v)}_{L_{v}^{2}L_{x}^{2}}\notag\\ &+M^{\frac{d}{2}-s}N_{1}^{\frac{d}{2}}\n{\nabla_{x}\beta(t,x,v)\exp[-\beta(t,x,v)]\chi(Mx)\chi(N_{1}v)}_{L_{v}^{2}L_{x}^{2}}\notag\\
	\lesssim&
	M^{1+\frac{d}{2}-s}N_{1}^{\frac{d}{2}}\exp[|t|M^{\frac{d-1}{2}-s}]\n{(\nabla\chi)(Mx)}_{L_{x}^{2}}\n{\chi(N_{1}v)}_{L_{v}^{2}}\notag\\
	&+M^{\frac{d}{2}-s}N_{1}^{\frac{d}{2}}\lra{|t|M^{1+\frac{d-1}{2}-s}}\exp[|t|M^{\frac{d-1}{2}-s}]\n{\chi(Mx)}_{L_{x}^{2}}\n{\chi(N_{1}v)}_{L_{v}^{2}}\notag\\
	\lesssim& M^{1-s}\exp[|t| M^{\frac{d-1}{2}-s}]\lra{|t|M^{\frac{d-1}{2}-s}}.\notag
\end{align}
In the same way, we also have
\begin{align*}
\n{f_{\mathrm{r}}(t)}_{L_{v}^{2,r_{0}} L_{x}^{2}}\lesssim M^{-s}\exp[|t| M^{\frac{d-1}{2}-s}].
\end{align*}
By the interpolation inequality, we obtain
\begin{align*}
\n{f_{\mathrm{r}}(t)}_{L_{v}^{2,r_{0}} H_x^{s_{0}}}\leq& \n{f_{\mathrm{r}}(t)}_{L_{v}^{2,r_{0}} H_{x}^{1}}^{s_{0}}
\n{f_{\mathrm{r}}(t)}_{L_{v}^{2,r_{0}} L_{x}^{2}}^{1-s_{0}}
\lesssim
M^{s_{0}-s} \exp[|t| M^{\frac{d-1}{2}-s}]\lra{|t|M^{\frac{d-1}{2}-s}}.
\end{align*}

For the lower bound estimate \eqref{equ:critical norm estimate for fr,lower bound} on $f_{\mathrm{r}}$,
we use the Sobolev inequality and lower bound estimate \eqref{equ:upper lower bound,fr} to obtain
\begin{align*}
&\n{\lra{\nabla_{x}}^{s_{0}}f_{\mathrm{r}}(t,x,v)}_{L_{v}^{2,r_{0}}L_{x}^{2}}\\
\gtrsim&\n{f_{\mathrm{r}}(t,x,v)}_{L_{v}^{2,r_{0}}L_{x}^{\frac{2d}{d-2s_{0}}}}\\
\gtrsim&M^{\frac{d}{2}-s}N_{1}^{\frac{d}{2}}\n{\exp \left[
-\beta(t,x,v)\right] \chi (Mx)\chi (
N_{1}v)}_{L_{v}^{2}L_{x}^{\frac{2d}{d-2s_{0}}}}\\
\gtrsim& M^{\frac{d}{2}-s}N_{1}^{\frac{d}{2}}\exp[|t| M^{\frac{d-1}{2}-s}] \n{\chi(Mx)}_{L_{x}^{\frac{2d}{d-2s_{0}}}}\n{\chi(N_{1}v)}_{L_{v}^{2}}\\
\gtrsim& M^{s_{0}-s}\exp[|t| M^{\frac{d-1}{2}-s}].
\end{align*}
Hence, we have done the proof of estimate $(\ref{equ:critical norm estimate for fr,lower bound})$.

Inserting in $|T_{*}|= M^{s-\frac{d-1}{2}}(\ln \ln \ln M)$ and $M^{s_{0}-s}=\frac{1}{\ln\ln M}$, we have
\begin{align*}
\n{f_{\mathrm{r}}(T_{*})}_{L_{v}^{2,r_{0}}H_{x}^{s_{0}}}\gtrsim  M^{s_{0}-s}\exp[|T_{*}| M^{\frac{d-1}{2}-s}]\gtrsim 1,
\end{align*}
which completes the proof of \eqref{equ:lower bound,fr,T}.
\end{proof}
\begin{remark}\label{remark:lower bound,kernels}
The lower bound estimate \eqref{equ:critical norm estimate for fr,lower bound} on $f_{\mathrm{r}}(t)$ also holds for the kernel ($d=3$)
\begin{align}
B(u-v,\omega)=\lrs{1_{\lr{|u-v|\leq 1}}|u-v|+1_{\lr{|u-v|\geq 1}}|u-v|^{-1}} \textbf{b}(\frac{u-v}{|u-v|}\cdot \omega).
\end{align}
 Indeed, in the proof of the lower bound estimate \eqref{equ:upper lower bound,fr}, the term $1_{\lr{|u-v|\leq 1}}|u-v|$ would vanish due to that $|u-v|\sim N_{2}\gg 1$.
\end{remark}

In the end, we conclude the norm deflation property of the approximation solution $f_{\mathrm{a}}$.
\begin{proposition}[Norm deflation of $f_{\mathrm{a}}$]\label{lemma:norm deflation of fa}
Let $T_{*}= -M^{s-\frac{d-1}{2}}(\ln \ln \ln M)$. We have
\begin{align}
	\n{f_{\mathrm{a}}(0)}_{L_v^{2,r_{0}} H_x^{s_{0}}}\lesssim& \frac{1}{\ln \ln M}\ll 1,\label{equ:critical norm estimate for fa,t=0}\\
	\n{f_{\mathrm{a}}(T_{*})}_{L_v^{2,r_{0}} H_x^{s_{0}}}\gtrsim&  1.\label{equ:critical norm estimate for fa,t=T}
\end{align}

\end{proposition}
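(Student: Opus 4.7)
The plan is to deduce the proposition as an immediate triangle-inequality corollary of the two preceding lemmas (Lemma on Sobolev norm bounds on $f_{\mathrm{b}}$ and Lemma on Sobolev norm bounds on $f_{\mathrm{r}}$), exploiting the decomposition $f_{\mathrm{a}}(t) = f_{\mathrm{r}}(t) + f_{\mathrm{b}}(t)$ that was built in precisely for this purpose. No further analysis of the collision structure is required at this step — all of the hard work has already been done in establishing the pointwise bounds on $\beta(t,x,v)$ and the subsequent $L_v^{2,r_0}H_x^{s_0}$ estimates.

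For the upper bound at $t=0$, I would simply write
\[
\n{f_{\mathrm{a}}(0)}_{L_v^{2,r_0}H_x^{s_0}} \leq \n{f_{\mathrm{r}}(0)}_{L_v^{2,r_0}H_x^{s_0}} + \n{f_{\mathrm{b}}(0)}_{L_v^{2,r_0}H_x^{s_0}}
\]
and then invoke \eqref{equ:critical norm estimate for fb} (which gives $\lesssim 1/\ln\ln M$ uniformly in $t \leq 0$) together with \eqref{equ:upper bound,fr,t=0} (the $t=0$ specialization of the upper bound on $f_{\mathrm{r}}$, which also yields $\lesssim 1/\ln\ln M$). Since both summands are of the same small order, the sum is $\lesssim 1/\ln\ln M$, which by the parameter choice in \eqref{equ:condition,s,s0}--\eqref{equ:T*} is $\ll 1$.

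For the lower bound at $t=T_*$, I would apply the reverse triangle inequality
\[
\n{f_{\mathrm{a}}(T_*)}_{L_v^{2,r_0}H_x^{s_0}} \geq \n{f_{\mathrm{r}}(T_*)}_{L_v^{2,r_0}H_x^{s_0}} - \n{f_{\mathrm{b}}(T_*)}_{L_v^{2,r_0}H_x^{s_0}}.
\]
By \eqref{equ:lower bound,fr,T}, the first term is $\gtrsim 1$; by \eqref{equ:critical norm estimate for fb}, the subtracted term is $\lesssim 1/\ln\ln M$, which is a vanishing correction as $M \to \infty$. Therefore the difference is $\gtrsim 1$ for all $M$ sufficiently large.

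There is effectively no obstacle at this stage: the proposition is a bookkeeping consequence of the two lemmas. The only point worth verifying is that the smallness $1/\ln\ln M$ beats the lower bound $\gtrsim 1$ at $t=T_*$, but this holds trivially for $M \gg 1$ as guaranteed by \eqref{equ:condition,parameters}. The genuine difficulty lies earlier — in establishing the pointwise bounds on $\beta$ that produced the exponential amplification $\exp[|T_*|M^{(d-1)/2-s}]$ in \eqref{equ:critical norm estimate for fr,lower bound}, together with the scaling choice \eqref{equ:T*} that turns this exponential into exactly $\ln\ln M$, compensating the prefactor $M^{s_0-s}=1/\ln\ln M$. Once those are in hand, the present proposition follows in a few lines.
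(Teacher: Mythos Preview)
Your proposal is correct and matches the paper's approach in spirit: both treat the proposition as an immediate corollary of the two preceding lemmas on $f_{\mathrm{b}}$ and $f_{\mathrm{r}}$. The only cosmetic difference is that the paper observes $f_{\mathrm{b}}$ and $f_{\mathrm{r}}$ have disjoint $v$-supports, yielding $\n{f_{\mathrm{a}}(t)}_{L_v^{2,r_0}H_x^{s_0}} \sim \n{f_{\mathrm{r}}(t)}_{L_v^{2,r_0}H_x^{s_0}} + \n{f_{\mathrm{b}}(t)}_{L_v^{2,r_0}H_x^{s_0}}$ in one stroke, whereas you use the (reverse) triangle inequality; either route closes the argument immediately.
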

\begin{proof}

Since $f_{\mathrm{b}}$ and $f_{\mathrm{r}}$ have disjoint velocity supports, we get
\begin{align}
\n{f_{\mathrm{a}}(t)}_{L_v^{2,r_{0}} H_x^{s_{0}}}\sim \n{f_{\mathrm{r}}(t)}_{L_v^{2,r_{0}} H_x^{s_{0}}}+\n{f_{\mathrm{b}}(t)}_{L_v^{2,r_{0}} H_x^{s_{0}}}.
\end{align}
Then by estimate \eqref{equ:critical norm estimate for fb} on $f_{\mathrm{b}}$ in Lemma \ref{lemma:Hs,bounds on fb} and estimates \eqref{equ:upper bound,fr,t=0}--\eqref{equ:lower bound,fr,T} on $f_{\mathrm{r}}$ in Lemma \ref{lemma:bounds on fr}, we arrive at estimates \eqref{equ:critical norm estimate for fa,t=0} and \eqref{equ:critical norm estimate for fa,t=T}.

\end{proof}

\subsubsection{Discussion on the $L^{1}$-based space and hard potentials}\label{section:Discussion on the $L^{1}$-based space and hard potentials}

The equation \eqref{equ:Boltzmann} is invariant under the scaling
\begin{align}
f_{\la}(t,x,,v)=\la^{\al+(d-1+\ga)\be}f(\la^{\al-\be}t,\la^{\al}x,\la^{\be}v),
\end{align}
for any $\al$, $\be\in \R$ and $\la>0$. Then
\begin{align*}
\n{|\nabla_{x}|^{s}|v|^{r}f_{\la}}_{L_{xv}^{1}}=\la^{^{\al+(d-1+\ga)\be}}\la^{\al s-\be r}\la^{-d\al-d\be}
\n{|\nabla_{x}|^{s}|v|^{r}f}_{L_{xv}^{1}},
\end{align*}
which gives the $L^{1}$-based scaling-critical index
\begin{align}
s_{1}=d-1,\quad  r_{1}=1+\ga.
\end{align}
In the $L^{1}$ setting, we construct the approximation solution $f_{a,1}=f_{\mathrm{b},1}+f_{\mathrm{r},1}$, where
\begin{align*}
f_{\mathrm{b},1}(t,x,v)=&\frac{M^{d-1-s}}{N_{2}^{d+2+\ga}}\sum_{j=1}^{J}K_{j}(x-vt)I_{j}(v),\\
f_{\mathrm{r,1}}(t,x,v)=&M^{d-s}N_{1}^{d}\exp \left[
-\beta(t,x,v)\right] \chi (Mx)\chi (
N_{1}v).
\end{align*}
Repeating the proof of estimates \eqref{equ:critical norm estimate for fb} and \eqref{equ:critical norm estimate for fr,lower bound}, we also have
\begin{align}
\n{\lra{\nabla}^{s_{0}}f_{\mathrm{b},1}}_{L_{v}^{1,r_{1}}L_{x}^{1}}\lesssim& M^{s_{0}-s},\\
\n{\lra{\nabla}^{s_{0}}f_{\mathrm{r},1}}_{L_{v}^{1,r_{1}}L_{x}^{1}}\gtrsim& M^{s_{0}-s}\exp[|t|N_{2}^{-2}M^{d-1-s}].
\end{align}
If $s_{0}<s_{1}=d-1$, a similar mechanism of norm deflation could be possible in the $L^{1}$ setting.

For the hard potential case that $\ga> 0$, the norm deflation of the approximation solution $f_{\mathrm{a}}(t)$ also holds. But, to perturb it into the exact solution, it requires a much more different work space to prove the error bounds in Proposition \ref{lemma:bounds on ferr} and provide a closed estimate in Lemma \ref{lemma:binlinear estimate}. We leave the problem for future work.

\subsection{$Z$-norm Bounds on the Approximation Solution}\label{section:$Z$-norm Bounds on the Approximation Solution}
To perturb the approximation solution $f_{\mathrm{a}}(t)$ into an exact solution $f_{\mathrm{ex}}(t)$, we need to prove the existence of a small correction term $f_{\mathrm{c}}(t)$. As it satisfies a more complicated equation \eqref{equ:correction term,fc}, some terms of \eqref{equ:correction term,fc} cannot be effectively
treated using Strichartz estimates like \eqref{equ:lwp,bilinear estimate}. Hence, we tailor a $Z$-norm to provide a closed estimate for the gain and loss terms, that is,
\begin{equation}\label{equ:closed z-norm estimate}
\|Q^\pm (f_1,f_2) \|_{Z} \lesssim \|f_1\|_{Z} \|f_2\|_{Z},
\end{equation}
where the $Z$-norm is given by
\begin{align}\label{equ:z-norm}
\n{f(t)}_{Z}=&M^{\frac{d-3}{2}}\n{\nabla_{x} f(t)}_{L_{v}^{2,r_{0}}L_{x}^{2}}
+M^{\frac{d-1}{2}}\n{f(t)}_{L_{v}^{2,r_{0}}L_{x}^{2}}+N_{2}^{\ga}\Vert f(t)\Vert _{L_{v}^{1}L_{x}^{\infty}}\\
&+N_{2}^{\frac{2d}{5}+\ga}\Vert f(t)\Vert _{L_{v}^{\frac{5}{3}}L_{x}^{\infty}}
+M^{-1}N_{2}^{\ga}\n{\nabla_{x}f(t)}_{L_{v}^{1}L_{x}^{\infty}}+
M^{-1}N_{2}^{\frac{2d}{5}+\ga}\n{\nabla_{x}f(t)}_{L_{v}^{\frac{5}{3}}L_{x}^{\infty}}.\notag
\end{align}
The closed estimate \eqref{equ:closed z-norm estimate} which we will prove in Section \ref{section:Bounds on the Correction Term} indeed plays a key role in the perturbation argument.
In the section, we give $Z$-norm bounds on the approximation solution $f_{\mathrm{a}}=f_{\mathrm{r}}+f_{\mathrm{b}}$, which will be used to control the error term $F_{\mathrm{err}}$.

\begin{lemma}[$Z$-norm bounds on $f_{\mathrm{b}}$]\label{lemma:bounds on fb}
For the $Z$-norm, we have
\begin{align}\label{equ:z-norm estimate for fb}
\n{f_{\mathrm{b}}(t)}_{L^{\infty}([T_{*},0];Z)}\lesssim  M^{\frac{d-1}{2}-s}.
\end{align}

\end{lemma}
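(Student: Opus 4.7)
The plan is to bound each of the six terms in the $Z$-norm separately by $M^{\frac{d-1}{2}-s}$, exploiting the structure of $f_{\mathrm{b}}$. The key observations are: (i) translation $x\mapsto x-vt$ does not affect $L^p_x$ norms of $K_j$, so every estimate is uniform in $t$; (ii) the velocity cutoffs $\lr{I_j}_{j=1}^J$ have pairwise essentially disjoint supports, so at any fixed $v$ only one $j$ contributes; (iii) on the support of $I_j$ we have $|v|\sim N_2$, so the weight $\lra{v}^{r_0}$ produces a factor $N_2^{r_0}$. I will then simply compute the relevant $L^p_x$ norms of $K_j$ and $\nabla_x K_j$ and the $L^p_v$ norms of $I_j$, use $J\sim M^{d-1}N_2^{d-1}$, and verify that the prefactor $M^{\frac{d-1}{2}-s}/N_2^{d+\ga}$ combines with everything to match $M^{\frac{d-1}{2}-s}$.

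For the two $L^{2,r_0}_v L^2_x$ pieces, I first use the $v$-disjointness to write
\begin{equation*}
\bbn{\sum_{j=1}^J K_j(x-vt)I_j(v)}_{L^2_vL^2_x}^2 = \sum_{j=1}^J \n{K_j}_{L^2_x}^2\,\n{I_j}_{L^2_v}^2,
\end{equation*}
then insert the elementary bounds $\n{K_j}_{L^2_x}^2\sim M^{1-d}N_2$, $\n{\nabla_xK_j}_{L^2_x}^2\sim M^{3-d}N_2$, and $\n{I_j}_{L^2_v}^2\sim M^{1-d}N_2$. Multiplying by $J\sim M^{d-1}N_2^{d-1}$, by the $N_2^{r_0}$ from the weight, by the prefactor, and by $M^{\frac{d-1}{2}}$ (respectively $M^{\frac{d-3}{2}+1}$ for the gradient), both reduce to checking $r_0\le \frac{d-1}{2}+\ga$, which follows from $\frac{1-d}{2}\le\ga\le 0$ and $s_0<\frac{d-1}{2}$ by splitting on the sign of $s_0+\ga$.

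For the four $L^p_vL^\infty_x$ pieces, I use that at each $v$ the disjointness of $\lr{I_j}$ lets only one term in the sum survive, so pointwise
\begin{equation*}
\bbabs{\sum_j K_j(x-vt)I_j(v)} \lesssim \sum_j I_j(v),\qquad \bbabs{\sum_j \nabla_xK_j(x-vt)I_j(v)} \lesssim M\sum_j I_j(v),
\end{equation*}
where the $M$ factor comes from $\n{\nabla_x K_j}_{L^\infty_x}\sim M$. Then $\n{\sum_j I_j}_{L^1_v}\sim J\cdot M^{1-d}N_2\sim N_2^d$ and $\n{\sum_j I_j}_{L^{5/3}_v}^{5/3}\sim J\cdot M^{1-d}N_2\sim N_2^d$, giving $\n{\sum_j I_j}_{L^{5/3}_v}\sim N_2^{3d/5}$. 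Multiplying by the $Z$-norm weights $N_2^{\ga}$, $N_2^{\frac{2d}{5}+\ga}$, $M^{-1}N_2^{\ga}$, $M^{-1}N_2^{\frac{2d}{5}+\ga}$ and by the prefactor $M^{\frac{d-1}{2}-s}/N_2^{d+\ga}$, each term collapses exactly to $M^{\frac{d-1}{2}-s}$; in particular the exponents of $N_2$ cancel cleanly (e.g., $\ga+d-(d+\ga)=0$ and $\frac{2d}{5}+\ga+\frac{3d}{5}-(d+\ga)=0$), which is precisely why the exponents $1$ and $\frac{5}{3}$ in the $Z$-norm were chosen.

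No single step is hard; the main thing is arithmetic bookkeeping. The only mild subtlety is confirming that the weight exponent $r_0=\max(0,s_0+\ga)$ does not exceed $\frac{d-1}{2}+\ga$ (the price charged by the $L^2_vL^2_x$ term), which is exactly the content of the conditions $s_0<\frac{d-1}{2}$ and $\ga\ge\frac{1-d}{2}$; any further restriction would be an obstruction here, but within the range of Theorem~\ref{thm:main theorem} everything is consistent and the $L^\infty_t$ control over $[T_*,0]$ is automatic from translation invariance.
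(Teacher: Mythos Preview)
Your proof is correct and follows essentially the same approach as the paper's: you bound each of the six $Z$-norm components separately, use the (essential) disjointness of the $v$-supports $\lr{I_j}$ to reduce sums to single terms in the $L^p_vL^\infty_x$ pieces and to a square-sum in the $L^2_vL^2_x$ pieces, and then carry out the same arithmetic with $J\sim (MN_2)^{d-1}$. The paper phrases the weight check as $\max\{s_0,-\ga\}\le \frac{d-1}{2}$, which is equivalent to your $r_0\le \frac{d-1}{2}+\ga$; no substantive difference.
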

\begin{proof}
\textbf{The $M^{\frac{d-3}{2}}\Vert \nabla_{x}\bullet\Vert
_{L_{v}^{2,r_{0}}L_{x}^{2}}$ and $M^{\frac{d-1}{2}}\Vert \bullet\Vert
_{L_{v}^{2,r_{0}}L_{x}^{2}}$ estimates.}

 This can be done in the same way as estimate \eqref{equ:critical norm estimate for fb} with the regularity index $s_{0}$ replaced by $1$ and $0$. Therefore, we obtain
\begin{align}\label{equ:z-norm estimate for fb,L2}
M^{\frac{d-3}{2}}\n{\nabla_{x} f_{\mathrm{b}}}_{L_{v}^{2}L_{x}^{2}}\lesssim & M^{\frac{d-1}{2}-s}N_{2}^{\max\lr{s_{0},-\ga}-\frac{d-1}{2}}\leq M^{\frac{d-1}{2}-s},\\ M^{\frac{d-1}{2}}\n{f_{\mathrm{b}}}_{L_{v}^{2}L_{x}^{2}}\lesssim &M^{\frac{d-1}{2}-s}N_{2}^{\max\lr{s_{0},-\ga}-\frac{d-1}{2}}\leq M^{\frac{d-1}{2}-s}.
\end{align}

\textbf{The $N_{2}^{\ga}\Vert \bullet \Vert _{L_{v}^{1}L_{x}^{\infty}}$ and $N_{2}^{\frac{2d}{5}+\ga}\Vert \bullet \Vert _{L_{v}^{\frac{5}{3}}L_{x}^{\infty}}$ estimates.}
\begin{align}\label{equ:z-norm estimate for fb,Lv1}
N_{2}^{\ga}\n{f_{\mathrm{b}}}_{L_{v}^{1}L_{x}^{\infty}}
\lesssim &N_{2}^{\ga}\frac{M^{\frac{d-1}{2}-s}}{N_{2}^{d+\ga}}
\bbn{\sum_{j=1}^{J}\n{K_{j}(x-vt)}_{L_{x}^{\infty}}I_{j}(v)}_{L_{v}^{1}}\\
\lesssim &\frac{M^{\frac{d-1}{2}-s}}{N_{2}^{d}}
\bbn{\sum_{j=1}^{J}I_{j}(v)}_{L_{v}^{1}}\notag\\
\lesssim&\frac{M^{\frac{d-1}{2}-s}}{N_{2}^{d}}N_{2}^{d}=M^{\frac{d-1}{2}-s},\notag
\end{align}
where in the last inequality we have used that
\begin{align*}
\sum_{j=1}^{J}I_{j}(v)\sim  1_{\lr{\frac{9N_{2}}{10}\leq|v|\leq \frac{11N_{2}}{10}}}(v),\quad \bbn{1_{\lr{\frac{9N_{2}}{10}\leq|v|\leq \frac{11N_{2}}{10}}}(v)}_{L_{v}^{1}}\lesssim N_{2}^{d}.
\end{align*}
In the same way, we also have
\begin{align*}
N_{2}^{\frac{2d}{5}+\ga}\n{f_{\mathrm{b}}}_{L_{v}^{\frac{5}{3}}L_{x}^{\infty}}
\lesssim &N_{2}^{\frac{2d}{5}+\ga}\frac{M^{\frac{d-1}{2}-s}}{N_{2}^{d+\ga}}
\bbn{\sum_{j=1}^{J}\n{K_{j}(x-vt)}_{L_{x}^{\infty}}I_{j}(v)}_{L_{v}^{\frac{5}{3}}}\\
\lesssim &N_{2}^{\frac{2d}{5}}\frac{M^{\frac{d-1}{2}-s}}{N_{2}^{d}}
\bbn{\sum_{j=1}^{J}I_{j}(v)}_{L_{v}^{\frac{5}{3}}}\\
\lesssim&N_{2}^{\frac{2d}{5}}\frac{M^{\frac{d-1}{2}-s}}{N_{2}^{d}}N_{2}^{\frac{3d}{5}}=M^{\frac{d-1}{2}-s}.
\end{align*}

The same bound is obtained for $M^{-1}N_{2}^{\ga}\Vert \nabla_{x}f_{\mathrm{b}} \Vert _{L_{v}^{1}L_{x}^{\infty}}$ and $M^{-1}N_{2}^{\frac{2d}{5}+\ga}\Vert \nabla_{x}f_{\mathrm{b}} \Vert _{L_{v}^{\frac{5}{3}}L_{x}^{\infty}}$ with one $x$-derivative producing a factor of $M$. Therefore, we complete the proof of the $Z$-norm estimate \eqref{equ:z-norm estimate for fb}.

\end{proof}

\begin{lemma}[$Z$-norm bounds on $f_{\mathrm{r}}$]\label{lemma:z-norm bounds on fr}
For $T_{*}\leq t\leq 0$, we have
\begin{align}\label{equ:z-norm estimate for fr}
\n{f_{\mathrm{r}}(t)}_{Z}\lesssim M^{\frac{d-1}{2}-s} \exp[|t| M^{\frac{d-1}{2}-s}]\lra{|t|M^{\frac{d-1}{2}-s}}.
\end{align}
In particular,
\begin{align}\label{equ:z-norm bounds for fr}
\n{f_{\mathrm{r}}(t)}_{L^{\infty}([T_{*},0];Z)}\lesssim M^{\frac{d-1}{2}-s} (\ln \ln M)^{2}.
\end{align}
\end{lemma}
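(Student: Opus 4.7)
The plan is to decompose the $Z$-norm into its six constituent pieces and handle each using the explicit formula $f_{\mathrm{r}}(t,x,v) = M^{\frac{d}{2}-s}N_{1}^{\frac{d}{2}}\exp[-\beta(t,x,v)]\chi(Mx)\chi(N_{1}v)$ from \eqref{equ:fr} together with the pointwise bounds on $\beta$ already furnished by Lemma \ref{lemma:pointwise estimate,fb}. The two $L^{2}$-based terms are essentially free: the proof of \eqref{equ:critical norm estimate for fr,upper bound} already yielded
$$\n{f_{\mathrm{r}}(t)}_{L_{v}^{2,r_{0}}L_{x}^{2}}\lesssim M^{-s}\exp[|t|M^{\frac{d-1}{2}-s}], \qquad \n{\nabla_{x}f_{\mathrm{r}}(t)}_{L_{v}^{2,r_{0}}L_{x}^{2}}\lesssim M^{1-s}\exp[|t|M^{\frac{d-1}{2}-s}]\lra{|t|M^{\frac{d-1}{2}-s}},$$
so multiplying by the $Z$-norm weights $M^{\frac{d-1}{2}}$ and $M^{\frac{d-3}{2}}$ gives exactly the target bound $M^{\frac{d-1}{2}-s}\exp[\ldots]\lra{\ldots}$.

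For the four $L_{v}^{p}L_{x}^{\infty}$ pieces ($p=1$ and $p=\frac{5}{3}$, with and without $\nabla_{x}$), I would begin from the pointwise upper bound \eqref{equ:upper bound,fr,k}, which gives $|\chi(N_{1}v)\beta|\lesssim |t|M^{\frac{d-1}{2}-s}$ and $|\chi(N_{1}v)\nabla_{x}\beta|\lesssim |t|M^{1+\frac{d-1}{2}-s}$. Taking $L_{x}^{\infty}$ on \eqref{equ:fr} immediately yields
$$\n{f_{\mathrm{r}}(t)}_{L_{x}^{\infty}}\lesssim M^{\frac{d}{2}-s}N_{1}^{\frac{d}{2}}\exp[|t|M^{\frac{d-1}{2}-s}]\chi(N_{1}v),$$
and for $\nabla_{x}f_{\mathrm{r}}$ the product rule produces either a factor of $M$ from $(\nabla\chi)(Mx)$ or a factor of $M\,|t|M^{\frac{d-1}{2}-s}$ from $\nabla_{x}\beta$, so that
$$\n{\nabla_{x}f_{\mathrm{r}}(t)}_{L_{x}^{\infty}}\lesssim M^{1+\frac{d}{2}-s}N_{1}^{\frac{d}{2}}\lra{|t|M^{\frac{d-1}{2}-s}}\exp[|t|M^{\frac{d-1}{2}-s}]\chi(N_{1}v).$$
Taking $L_{v}^{p}$ of $\chi(N_{1}v)$ gives $N_{1}^{-d/p'}$; multiplying in the prefactors $N_{2}^{\gamma},\,N_{2}^{\frac{2d}{5}+\gamma},\,M^{-1}N_{2}^{\gamma},\,M^{-1}N_{2}^{\frac{2d}{5}+\gamma}$ respectively, one ends up needing $N_{2}^{\gamma}M^{1/2}N_{1}^{-d/2}\lesssim 1$ and $N_{2}^{\frac{2d}{5}+\gamma}M^{1/2}N_{1}^{-d/10}\lesssim 1$ (and likewise for the derivative versions, where the $M^{-1}$ weight cancels the extra $M$). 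Both are comfortably absorbed by the parameter relations \eqref{equ:condition,parameters} (namely $N_{1}\geq N_{2}^{10}\geq M^{100}$) together with $\gamma\leq 0$; the $N_{1}$-smallness wins by a huge margin.

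The only arithmetic to be careful with is keeping track of which term in $\nabla_{x}f_{\mathrm{r}}$ produces $\lra{|t|M^{\frac{d-1}{2}-s}}$ versus the lone $\exp[|t|M^{\frac{d-1}{2}-s}]$ factor, and noting that after summing the six pieces the worst growth is the one carried by the gradient $L^{2}$ piece, which is $\exp[\cdots]\lra{\cdots}$. This gives the overall bound \eqref{equ:z-norm estimate for fr}. I don't anticipate any genuine obstacle here, since all of the hard work was already done when \eqref{equ:upper bound,fr,k} was proved; the main thing to be vigilant about is the asymmetry between the $v$-weighted $L^{2}$ norm (where the weight on $v$ is harmless owing to $|v|\lesssim N_{1}^{-1}\ll 1$ on the support of $\chi(N_{1}v)$) and the $L^{p}$ norms with no weight, which is why the parameter hierarchy $N_{1}\gg N_{2}\gg M$ is chosen the way it is.

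Finally, to pass from \eqref{equ:z-norm estimate for fr} to \eqref{equ:z-norm bounds for fr} I simply substitute $|T_{*}|M^{\frac{d-1}{2}-s}=\ln\ln\ln M$, which turns $\exp[|T_{*}|M^{\frac{d-1}{2}-s}]\lra{|T_{*}|M^{\frac{d-1}{2}-s}}$ into $(\ln\ln M)\lra{\ln\ln\ln M}\lesssim (\ln\ln M)^{2}$, uniformly for $t\in[T_{*},0]$ since the prefactor $\exp[|t|M^{\frac{d-1}{2}-s}]\lra{|t|M^{\frac{d-1}{2}-s}}$ is monotone in $|t|$.
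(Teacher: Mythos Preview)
Your proposal is correct and follows essentially the same approach as the paper's own proof: handle the two $L^{2}$-based pieces by quoting the computations already done for \eqref{equ:critical norm estimate for fr,upper bound}, and for the four $L_{v}^{p}L_{x}^{\infty}$ pieces take $L_{x}^{\infty}$ first using the pointwise bounds \eqref{equ:upper bound,fr,k}, then integrate $\chi(N_{1}v)$ in $v$ and absorb the residual factors via the hierarchy $N_{1}\geq N_{2}^{10}\geq M^{100}$. One small slip: when you write ``Taking $L_{v}^{p}$ of $\chi(N_{1}v)$ gives $N_{1}^{-d/p'}$'', the correct exponent is $N_{1}^{-d/p}$ (so $N_{1}^{-d}$ for $p=1$ and $N_{1}^{-3d/5}$ for $p=5/3$); your subsequent conditions $N_{2}^{\gamma}M^{1/2}N_{1}^{-d/2}\lesssim 1$ and $N_{2}^{\frac{2d}{5}+\gamma}M^{1/2}N_{1}^{-d/10}\lesssim 1$ are nevertheless stated correctly, so this is purely a typo and does not affect the argument.
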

\begin{proof}
Recall
\begin{equation}
f_{\mathrm{r}}(t,x,v)=M^{\frac{d}{2}-s}N_{1}^{\frac{d}{2}}\exp \left[
-\beta(t,x,v)\right] \chi (Mx)\chi (
N_{1}v).
\end{equation}

\textbf{The $M^{\frac{d-3}{2}}\Vert \nabla_{x}\bullet\Vert
_{L_{v}^{2,r_{0}}L_{x}^{2}}$ and $M^{\frac{d-1}{2}}\Vert \bullet\Vert
_{L_{v}^{2,r_{0}}L_{x}^{2}}$ estimates.}

The weight on $v$-variable plays no role due to the $v$-support set, so we can discard it.
By the pointwise upper bound \eqref{equ:upper bound,fr,k}, we get
\begin{align*}
	&M^{\frac{d-3}{2}}\n{\nabla_{x}f_{\rm{r}}(t)}_{L_{v}^{2,r_{0}}L_{x}^{2}}\\
	\leq&M^{\frac{d-3}{2}}M^{1+\frac{d}{2}-s}N_{1}^{\frac{d}{2}}\n{\exp[-\beta(t,x,v)](\nabla\chi)(Mx)\chi(N_{1}v)}_{L_{v}^{2}L_{x}^{2}}\\ &+M^{\frac{d-3}{2}}M^{\frac{d}{2}-s}N_{1}^{\frac{d}{2}}\n{\nabla_{x}\beta(t,x,v)\exp[-\beta(t,x,v)]\chi(Mx)\chi(N_{1}v)}_{L_{v}^{2}L_{x}^{2}}\\
	\lesssim&
	M^{\frac{d-1}{2}}M^{\frac{d}{2}-s}N_{1}^{\frac{d}{2}}\exp[|t|M^{\frac{d-1}{2}-s}]\n{(\nabla\chi)(Mx)}_{L_{x}^{2}}\n{\chi(N_{1}v)}_{L_{v}^{2}}\\
	&+M^{\frac{d-3}{2}}M^{\frac{d}{2}-s}N_{1}^{\frac{d}{2}}\lra{|t|M^{1+\frac{d-1}{2}-s}}\exp[|t|M^{\frac{d-1}{2}-s}]\n{\chi(Mx)}_{L_{x}^{2}}\n{\chi(N_{1}v)}_{L_{v}^{2}}\\
	\lesssim& M^{\frac{d-1}{2}-s}\exp[|t| M^{\frac{d-1}{2}-s}]\lra{|t|M^{\frac{d-1}{2}-s}}.
\end{align*}
The $M^{\frac{d-1}{2}}\n{f_{\mathrm{r}}}_{L_{v}^{2,r_{0}}L_{x}^{2}}$ estimate can be handled in the same way.

\textbf{The $N_{2}^{\ga}\Vert \bullet \Vert _{L_{v}^{1}L_{x}^{\infty}}$ and $M^{-1}N_{2}^{\ga}\Vert \nabla_{x}\bullet\Vert _{L_{v}^{1}L_{x}^{\infty}}$ estimates.}

We only need to treat the $M^{-1}N_{2}^{\ga}\Vert \nabla_{x}\bullet\Vert _{L_{v}^{1}L_{x}^{\infty}}$ norm, as the $N_{2}^{\ga}\Vert \bullet\Vert _{L_{v}^{1}L_{x}^{\infty}}$ norm can be dealt with in a similar way.
We use the pointwise upper bound \eqref{equ:upper bound,fr,k} to obtain
\begin{align}\label{equ:fr,Z-norm,L1}
&M^{-1}N_{2}^{\ga}\n{\nabla_{x}f_{\rm{r}}(t)}_{L_{v}^{1}L_{x}^{\infty}}\\
	\leq& M^{-1}N_{2}^{\ga} M^{1+\frac{d}{2}-s}N_{1}^{\frac{d}{2}}\n{\exp[-\beta(t,x,v)](\nabla\chi)(Mx)\chi(N_{1}v)}_{L_{v}^{1}L_{x}^{\infty}}\notag\\ &+M^{-1}N_{2}^{\ga}M^{\frac{d}{2}-s}N_{1}^{\frac{d}{2}}\n{\nabla_{x}\beta(t,x,v)\exp[-\beta(t,x,v)]\chi(Mx)\chi(N_{1}v)}_{L_{v}^{1}L_{x}^{\infty}}\notag\\
	\lesssim&
	M^{-1}N_{2}^{\ga}M^{1+\frac{d}{2}-s}N_{1}^{\frac{d}{2}}\exp[|t|M^{\frac{d-1}{2}-s}]\n{(\nabla\chi)(Mx)}_{L_{x}^{\infty}}\n{\chi(N_{1}v)}_{L_{v}^{1}}\notag\\
	&+M^{-1}N_{2}^{\ga}M^{\frac{d}{2}-s}N_{1}^{\frac{d}{2}}\lra{|t|M^{1+\frac{d-1}{2}-s}}\exp[|t|M^{\frac{d-1}{2}-s}]\n{\chi(Mx)}_{L_{x}^{\infty}}\n{\chi(N_{1}v)}_{L_{v}^{1}}\notag\\
	\lesssim& M^{-1}N_{2}^{\ga} M^{1+\frac{d}{2}-s}N_{1}^{-\frac{d}{2}}\exp[|t| M^{\frac{d-1}{2}-s}]\lra{|t|M^{\frac{d-1}{2}-s}}\notag\\
\lesssim& N_{1}^{-\frac{d}{2}}N_{2}^{\ga}M^{\frac{d}{2}-s}\exp[|t| M^{\frac{d-1}{2}-s}]\lra{|t|M^{\frac{d-1}{2}-s}}.\notag
\end{align}
This bound is enough as it carries the smallness factor $N_{1}^{-\frac{d}{2}}$.

\textbf{The $N_{2}^{\frac{2d}{5}+\ga}\Vert \bullet \Vert _{L_{v}^{\frac{5}{3}}L_{x}^{\infty}}$ and $M^{-1}N_{2}^{\frac{2d}{5}+\ga}\Vert \nabla_{x}\bullet\Vert _{L_{v}^{\frac{5}{3}}L_{x}^{\infty}}$ estimates.}

These two norms can be controlled in the same manner as \eqref{equ:fr,Z-norm,L1} with the $L_{v}^{1}$ norm replaced by the
$L_{v}^{\frac{5}{3}}$ norm. As a result, we also have
\begin{align}
N_{2}^{\frac{2d}{5}+\ga} \n{f_{\rm{r}}(t)}_{L_{v}^{\frac{5}{3}}L_{x}^{\infty}}\lesssim& N_{1}^{-\frac{d}{10}}N_{2}^{\frac{2d}{5}+\ga} M^{\frac{d}{2}-s}\exp[|t| M^{\frac{d-1}{2}-s}]\lra{|t|M^{\frac{d-1}{2}-s}},\label{equ:vfr,L35}\\
M^{-1}N_{2}^{\frac{2d}{5}+\ga} \n{\nabla_{x}f_{\rm{r}}(t)}_{L_{v}^{\frac{5}{3}}L_{x}^{\infty}}\lesssim& N_{1}^{-\frac{d}{10}}N_{2}^{\frac{2d}{5}+\ga}M^{\frac{d}{2}-s}\exp[|t| M^{\frac{d-1}{2}-s}]\lra{|t|M^{\frac{d-1}{2}-s}}.\label{equ:vfr,L35,H1}
\end{align}
By the condition \eqref{equ:condition,parameters} that $N_{1}\geq N_{2}^{10}\geq M^{100}$, it is sufficient to obtain the desired bound. Thus, we complete the proof of \eqref{equ:z-norm estimate for fr}.

Inserting in $|T_{*}|=M^{s-\frac{d-1}{2}}(\ln \ln \ln M)$ and $M^{s-s_{0}}=\ln \ln M$, we obtain
\begin{align*}
\n{f_{\mathrm{r}}(t)}_{L^{\infty}([T_{*},0];Z)} \lesssim M^{\frac{d-1}{2}-s}(\ln \ln M)^{2},
\end{align*}
which completes the proof of \eqref{equ:z-norm bounds for fr}.
\end{proof}

To the end, we conclude the $Z$-norm bounds on $f_{\mathrm{a}}=f_{\mathrm{r}}+f_{\mathrm{b}}$.
\begin{proposition}[$Z$-norm bounds on $f_{\mathrm{a}}$]\label{lemma:z-norm bounds on fa}
For the $Z$-norm,
\begin{align}\label{equ:z-norm estimate for fa}
\n{f_{\mathrm{a}}(t)}_{L^{\infty}([T_{*},0];Z)}\lesssim M^{\frac{d-1}{2}-s} (\ln \ln M)^{2}.
\end{align}
\end{proposition}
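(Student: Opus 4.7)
The statement is essentially a triangle-inequality assembly of the two preceding lemmas. By construction $f_{\mathrm{a}} = f_{\mathrm{r}} + f_{\mathrm{b}}$, and the $Z$-norm defined in \eqref{equ:z-norm} is a sum of (semi)norms, hence subadditive. So the plan is simply:
\begin{equation*}
\n{f_{\mathrm{a}}(t)}_{Z} \le \n{f_{\mathrm{r}}(t)}_{Z} + \n{f_{\mathrm{b}}(t)}_{Z},
\end{equation*}
take the $L^{\infty}_{t}$ norm on $[T_{*},0]$, and plug in the two previous bounds.

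First, I would invoke Lemma \ref{lemma:bounds on fb}, which gives $\n{f_{\mathrm{b}}}_{L^{\infty}([T_{*},0];Z)}\lesssim M^{\frac{d-1}{2}-s}$. This term is already smaller than the target bound by a factor of $(\ln\ln M)^{2}$, so it is harmless.

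Second, I would invoke Lemma \ref{lemma:z-norm bounds on fr}, specifically the time-dependent estimate
\begin{equation*}
\n{f_{\mathrm{r}}(t)}_{Z}\lesssim M^{\frac{d-1}{2}-s} \exp[|t| M^{\frac{d-1}{2}-s}]\lra{|t|M^{\frac{d-1}{2}-s}}.
\end{equation*}
Evaluating at the worst time $t = T_{*}$ using the choice $|T_{*}| = M^{s-\frac{d-1}{2}}(\ln\ln\ln M)$ from \eqref{equ:T*}, we get $|T_{*}|M^{\frac{d-1}{2}-s} = \ln\ln\ln M$, so $\exp[|T_{*}| M^{\frac{d-1}{2}-s}] = \ln\ln M$ and $\lra{|T_{*}|M^{\frac{d-1}{2}-s}} \lesssim \ln\ln M$. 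Multiplying these two factors yields the $(\ln\ln M)^{2}$ loss, which is exactly the logarithmic factor appearing in the target.

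Combining the two contributions gives the claimed bound. There is no real obstacle here, since all the heavy lifting, the pointwise bounds on $\beta(t,x,v)$ and the six component norms of the $Z$-norm, has already been carried out in Lemmas \ref{lemma:bounds on fb} and \ref{lemma:z-norm bounds on fr}; the only thing to check is that the $f_{\mathrm{r}}$ bound strictly dominates the $f_{\mathrm{b}}$ bound on the time interval $[T_{*},0]$, which is immediate from $(\ln\ln M)^{2} \ge 1$.
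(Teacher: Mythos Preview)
Your proposal is correct and matches the paper's proof essentially line for line: the paper also applies the triangle inequality $\n{f_{\mathrm{a}}(t)}_{Z}\lesssim \n{f_{\mathrm{r}}(t)}_{Z}+\n{f_{\mathrm{b}}(t)}_{Z}$ and then invokes the bounds \eqref{equ:z-norm estimate for fb} and \eqref{equ:z-norm estimate for fr}. In fact you give more detail than the paper, since you spell out the evaluation at $t=T_{*}$ that produces the $(\ln\ln M)^{2}$ factor, whereas the paper simply cites the already-packaged bound \eqref{equ:z-norm bounds for fr}.
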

\begin{proof}
By the triangle inequality, we have
\begin{align*}
	\n{f_{\mathrm{a}}(t)}_{Z}\lesssim \n{f_{\mathrm{r}}(t)}_{Z}+\n{f_{\mathrm{b}}(t)}_{Z}.
\end{align*}
Then combining estimate \eqref{equ:z-norm estimate for fb} on $f_{\mathrm{b}}$ and estimate \eqref{equ:z-norm estimate for fr} on $f_{\mathrm{r}}$, we complete the proof of estimate \eqref{equ:z-norm estimate for fa}.
\end{proof}

\subsection{$Z$-norm Bounds on the Error Terms} \label{section:Bounds on the Error Terms}
In the section, we give the $Z$-norm bounds on the error term $F_{\mathrm{err}}$.
Recall the error term
\begin{align*}
F_{\text{err}}& =\partial _{t}f_{\mathrm{a}}+v\cdot \nabla
_{x}f_{\mathrm{a}}+Q^{-}(f_{\mathrm{a}},f_{\mathrm{a}})-Q^{+}(f_{\mathrm{a}},f_{
\mathrm{a}}) \\ & =v\cdot \nabla
_{x}\,f_{\mathrm{r}}-Q^{+}(f_{\mathrm{r}},f_{\mathrm{b}})\mp Q^{\pm}(f_{\mathrm{b}},f_{\mathrm{r}})\mp Q^{\pm}(f_{\mathrm{r}},f_{
\mathrm{r}})\mp Q^{\pm}(f_{\mathrm{b}},f_{\mathrm{b}}),
\end{align*}
and thus the estimate on $F_{\mathrm{err}}$ highly relies on the $Z$-norm bounds of $f_{\mathrm{r}}$ and $f_{\mathrm{b}}$.
Recall the estimate \eqref{equ:z-norm estimate for fb,L2} in Lemma \ref{lemma:bounds on fb} that
\begin{align*}
M^{\frac{d-3}{2}}\n{\nabla_{x} f_{\mathrm{b}}}_{L_{v}^{2,r_{0}}L_{x}^{2}}\lesssim M^{\frac{d-1}{2}-s}N_{2}^{\max\lr{s_{0},-\ga}-\frac{d-1}{2}},\\
 M^{\frac{d-1}{2}}\n{f_{\mathrm{b}}}_{L_{v}^{2,r_{0}}L_{x}^{2}}\lesssim M^{\frac{d-1}{2}-s}N_{2}^{\max\lr{s_{0},-\ga}-\frac{d-1}{2}}.
\end{align*}
For the case $\ga \in(\frac{1-d}{2},0]$, the extra smallness comes from the factor $N_{2}^{\max\lr{s_{0},-\ga}-\frac{d-1}{2}}$ as we have required that $s_{0}<\frac{d-1}{2}$ and $N_{2}\gg M$. Thus, it is enough to deal with the hardest endpoint case that $\ga=\frac{1-d}{2}$, in which the $M^{\frac{d-3}{2}}\Vert \nabla_{x}\bullet\Vert
_{L_{v}^{2,r_{0}}L_{x}^{2}}$ and $M^{\frac{d-1}{2}}\Vert \bullet\Vert
_{L_{v}^{2,r_{0}}L_{x}^{2}}$ norms of $f_{\mathrm{b}}$ are the order of $M^{\frac{d-1}{2}-s}$ and hence would not give any smallness for $s<\frac{d-1}{2}$.
Additionally, we only need to prove the $d=3$ case as the $d=2$ case follows from a similar way.

In the section, we set $d=3$, $\ga=-1$ and hence $r_{0}=0$, for which the $Z$-norm is
\begin{align}
\n{f(t)}_{Z}=&\n{\nabla_{x} f(t)}_{L_{v}^{2}L_{x}^{2}}
+M\n{f(t)}_{L_{v}^{2}L_{x}^{2}}+N_{2}^{-1}\Vert f(t)\Vert _{L_{v}^{1}L_{x}^{\infty}}\\
&+N_{2}^{\frac{1}{5}}\Vert f(t)\Vert _{L_{v}^{\frac{5}{3}}L_{x}^{\infty}}
+M^{-1}N_{2}^{-1}\n{\nabla_{x}f(t)}_{L_{v}^{1}L_{x}^{\infty}}+
M^{-1}N_{2}^{\frac{1}{5}}\n{\nabla_{x}f(t)}_{L_{v}^{\frac{5}{3}}L_{x}^{\infty}}.\notag
\end{align}
The following is the main result about the $Z$-norm bounds on the error term $F_{\mathrm{err}}$.

\begin{proposition}[$Z$-norm bounds on $F_{\text{err}}$]\label{lemma:bounds on ferr}
 For $T_{*}\leq  \tau\leq t \leq 0$,
\begin{equation}\label{equ:Ferr_bound}
\bbn{\int_{\tau}^{t}e^{-(t-t_{0})v\cdot \nabla _{x}}F_{\text{err}
}(t_{0})\,dt_{0}}_{Z}\lesssim M^{-1}.
\end{equation}
\end{proposition}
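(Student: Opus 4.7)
The plan is to expand $F_{\text{err}}$ using the equations satisfied by $f_{\mathrm{r}}$ and $f_{\mathrm{b}}$, then control each surviving piece in $Z$-norm after time integration. Substituting $\partial_t f_{\mathrm{b}} + v\cdot \nabla_x f_{\mathrm{b}} = 0$ from \eqref{equ:fb equation} and $\partial_t f_{\mathrm{r}} = -Q^-(f_{\mathrm{r}}, f_{\mathrm{b}})$ from \eqref{equ:fr equation}, and expanding $Q^{\pm}(f_{\mathrm{a}},f_{\mathrm{a}})$ via bilinearity, the $Q^-(f_{\mathrm{r}}, f_{\mathrm{b}})$ contributions cancel and one is left with
\begin{align*}
F_{\text{err}} = v\cdot \nabla_x f_{\mathrm{r}} + Q^-(f_{\mathrm{r}}, f_{\mathrm{r}}) + Q^-(f_{\mathrm{b}}, f_{\mathrm{r}}) + Q^-(f_{\mathrm{b}}, f_{\mathrm{b}}) - \sum_{i,j\in\{\mathrm{r},\mathrm{b}\}} Q^+(f_i, f_j).
\end{align*}

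Next I would observe that $e^{-(t-t_0)v\cdot \nabla_x}$ acts on $(x,v)$-functions by the measure-preserving map $(x,v) \mapsto (x-(t-t_0)v,\, v)$. It is therefore an isometry on every $L^p_v L^q_x$ factor appearing in the $Z$-norm \eqref{equ:z-norm}, and commutes with $\nabla_x$. Combined with Minkowski's inequality in $t_0$, this reduces the target estimate to
\begin{align*}
\bbn{\int_{\tau}^{t}e^{-(t-t_{0})v\cdot \nabla _{x}}F_{\text{err}}(t_{0})\,dt_{0}}_{Z} \leq \int_{\tau}^{t}\|F_{\text{err}}(t_0)\|_Z \, dt_0,
\end{align*}
so I only need to bound each summand of $F_{\text{err}}(t_0)$ pointwise in $t_0$ and absorb the time integration over an interval of length $|T_*|\sim M^{s-(d-1)/2}\ln\ln\ln M$.

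The transport term $v\cdot \nabla_x f_{\mathrm{r}}$ is essentially trivial, since the $v$-support of $f_{\mathrm{r}}$ forces $|v|\lesssim N_1^{-1}$, so repeating the computation of Lemma \ref{lemma:z-norm bounds on fr} with one factor of $|v|$ inserted yields a smallness gain of $N_1^{-1}\leq M^{-100}$, which easily beats $|T_*|$. The mixed terms $Q^{\pm}(\cdot,f_{\mathrm{r}})$ and $Q^-(f_{\mathrm{r}},f_{\mathrm{r}})$ inherit this $N_1$-smallness through the pointwise factor $\chi(N_1 v)$ built into $f_{\mathrm{r}}$ (for the loss term, this is immediate from the product structure; for the gain term it follows from the energy identity $|v^*|^2+|u^*|^2=|v|^2+|u|^2$, which forces at least one post-collision velocity to inherit the $N_1^{-1}$ constraint whenever only one of the inputs does).

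The real obstacle is the pure $f_{\mathrm{b}}$ contributions $Q^\pm(f_{\mathrm{b}}, f_{\mathrm{b}})$, for which no $N_1$-gain is available and the naive bilinear bound $\|Q^\pm(f_{\mathrm{b}},f_{\mathrm{b}})\|_Z \lesssim \|f_{\mathrm{b}}\|_Z^2 \lesssim M^{d-1-2s}$ fails to close after multiplication by $|T_*|$, since $s<\frac{d-1}{2}$. The gain must come from the needle geometry: $f_{\mathrm{b}}=\frac{M^{(d-1)/2-s}}{N_2^{d+\gamma}}\sum_j K_j(x-vt)I_j(v)$ is supported on $J\sim (MN_2)^{d-1}$ disjoint thin tubes centered at $v\sim N_2 e_j$, and the collision identities together with the angular cutoff $\mathbf{b}(\cos\theta)$ force only a very sparse subset of pairs $(e_j,e_k)$ to produce post-collision velocities $(v^*,u^*)$ landing back in some other needle's support. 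I expect that combining this sparsity with the weighted-convolution bound of Lemma \ref{lemma:Q+,bilinear estimate} applied to the kernel $|\eta|^{-(d+\gamma)}$, and treating each norm in $Z$ via its adapted exponent ($L^{5/3}_v L^\infty_x$ for the $Q^+$ term, $L^1_v L^\infty_x$ and $L^2_{v}L^2_x$ for the loss term), will yield the required inverse powers of $M$ and $N_2$ to absorb $|T_*|$ and close the $M^{-1}$ bound. The detailed execution of this geometric decomposition is exactly the content of the subsequent analysis-of-terms sections.
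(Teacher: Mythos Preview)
Your expansion of $F_{\text{err}}$ and the treatment of the transport term and the mixed terms containing $f_{\mathrm{r}}$ are essentially the paper's argument: the $N_1$-smallness carried by the $L_v^1 L_x^\infty$ and $L_v^{5/3} L_x^\infty$ norms of $f_{\mathrm{r}}$ (via estimates \eqref{equ:fr,Z-norm,L1}--\eqref{equ:vfr,L35,H1}) closes all those pieces after Minkowski and the bilinear $Z$-estimate. One small omission: you do not explicitly cover $Q^+(f_{\mathrm{r}},f_{\mathrm{b}})$, but the paper's bilinear estimate \eqref{equ:z-norm,L1,L53} is symmetric enough in the two inputs for $Q^+$ to handle it.

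There is, however, a genuine gap in your plan for $Q^{\pm}(f_{\mathrm{b}},f_{\mathrm{b}})$. You commit at the outset to Minkowski in $t_0$, reducing everything to $|T_*|\cdot \sup_{t_0}\|F_{\text{err}}(t_0)\|_Z$, and then propose that needle-pair sparsity will supply the missing gain. Neither part survives. First, for the $N_2^{\ga}\|\cdot\|_{L_v^1 L_x^\infty}$ component of the $Z$-norm, the paper explicitly warns that pulling the $t_0$-integral outside is fatal; the gain there comes from a dispersive effect---carrying out the $t_0$-integration \emph{before} taking $L_x^\infty$, so that the factor $\bigl(\tfrac{N_2}{|x-v(t-t_0)|+M^{-1}}\bigr)^2$ integrates to $O(MN_2)$ rather than the $|T_*|(MN_2)^2$ you would get with Minkowski. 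Second, your sparsity heuristic is simply wrong for $Q^-$: the loss term $Q^-(f_{\mathrm{b}},f_{\mathrm{b}})(v)=f_{\mathrm{b}}(v)\int |u-v|^{\gamma} f_{\mathrm{b}}(u)\,du$ has no collisional constraint coupling the needle indices, so every pair $(j,k)$ contributes. The paper's actual mechanism for the $L^2_{xv}$ part is a near-diagonal/off-diagonal splitting in the angle $\alpha_{j,k}$ between $e_j$ and $e_k$, exploiting the lower bound $|P_{e_k}^\perp(u-v)|\gtrsim M^{-1}$ off-diagonal to extract $N_2^{-\delta}$; for the $L_v^1 L_x^\infty$ part it is the time-integration dispersion above. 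For $Q^+$ the analysis is similar in spirit but uses angular integration over $\omega$ together with the $t_0$-integration of the kernel $E_k$.
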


We deal with all of the terms in the following separate sections. In section \ref{section:Analysis of term1}, we give estimates on the term $v \cdot \nabla_{x}f_{\mathrm{r}}$. In section \ref{section:Analysis of term2}, we handle the bilinear terms which contain $f_{\mathrm{r}}$. Finally, we deal with $Q^{\pm}(f_{\mathrm{b}},f_{\mathrm{b}})$ in sections \ref{section:Analysis of Q-}--\ref{section:Analysis of Q+}, which are the most intricate parts.

The estimates are mainly achieved by
moving the $t_{0}$ integration to the outside as follows:
\begin{equation*}
	\bbn{ \int_{\tau }^{t}e^{-(t-t_{0})v\cdot \nabla _{x}}F_{\text{err}%
	}(t_{0})\,dt_{0}}_{Z}\lesssim |T_{*}|\n{F_{\text{err}}}_{L_{t}^{\infty }Z}.
\end{equation*}
The only exception is the treatment of the bound on $L_{v}^{1}L_{x}^{\infty
} $ of $Q^{\pm }(f_{\mathrm{b}},f_{\mathrm{b}})$, where a substantial gain is captured by
carrying out the $t_{0}$ integration first.

\subsubsection{Analysis of $v\cdot \nabla_{x}f_{\mathrm{r}}$} \label{section:Analysis of term1}
\begin{lemma}
For $T_{*}\leq  \tau\leq t \leq 0$,
\begin{align}
\bbn{ \int_{\tau }^{t}e^{-(t-t_{0})v\cdot \nabla_{x}}(v\cdot \nabla_{x}f_{\mathrm{r}})(t_{0})\,dt_{0}} _{Z}\lesssim M^{-1}.
\end{align}
\end{lemma}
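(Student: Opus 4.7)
The plan is to move the $t_{0}$ integration outside the $Z$-norm and then bound the integrand uniformly in $t_{0}$. I first observe that the free transport $e^{-(t-t_{0})v\cdot \nabla_{x}}$ acts by the shift $g(x,v)\mapsto g(x-(t-t_{0})v,v)$, which commutes with $\nabla_{x}$ and preserves each fixed-$v$ slice in any $L_{x}^{q}$ norm. Consequently it is an isometry on every constituent norm $L_{v}^{p}L_{x}^{q}$ appearing in the $Z$-norm, and Minkowski's inequality gives
\[
\bbn{\int_{\tau}^{t}e^{-(t-t_{0})v\cdot \nabla_{x}}(v\cdot \nabla_{x}f_{\mathrm{r}})(t_{0})\,dt_{0}}_{Z}\leq |T_{*}|\,\n{v\cdot \nabla_{x}f_{\mathrm{r}}}_{L_{t}^{\infty}([T_{*},0];Z)}.
\]

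Next I would exploit the narrow velocity support of $f_{\mathrm{r}}$: since $f_{\mathrm{r}}\propto \chi(N_{1}v)$, every expression of the form $v\,\nabla_{x}^{k}f_{\mathrm{r}}$ is supported in $|v|\lesssim N_{1}^{-1}$, so the prefactor $v$ contributes a pointwise factor of $N_{1}^{-1}$. Using the pointwise upper bounds $|\chi(N_{1}v)\nabla_{x}^{k}\beta(t,x,v)|\lesssim |t|M^{k+\frac{d-1}{2}-s}$ from Lemma \ref{lemma:pointwise estimate,fb} for $k=0,1,2$, the computations in the proof of Lemma \ref{lemma:z-norm bounds on fr} then repeat essentially verbatim for each of the six components of the $Z$-norm, with the extra $\nabla_{x}$ contributing at most a factor $M$ (either through a derivative of $\chi(Mx)$ or through $\nabla_{x}\beta$, whose size is absorbed into the $\log$ factors) and $v$ contributing $N_{1}^{-1}$. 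This yields
\[
\n{v\cdot \nabla_{x}f_{\mathrm{r}}(t)}_{Z}\lesssim N_{1}^{-1}M\cdot M^{\frac{d-1}{2}-s}\exp[|t|M^{\frac{d-1}{2}-s}]\lra{|t|M^{\frac{d-1}{2}-s}}.
\]

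Finally, inserting $|T_{*}|=M^{s-\frac{d-1}{2}}(\ln\ln\ln M)$ and noting that $|t|M^{\frac{d-1}{2}-s}\leq \ln\ln\ln M$ on $[T_{*},0]$, together with the parameter choice $N_{1}\geq M^{100}$, one arrives at
\[
|T_{*}|\cdot \n{v\cdot \nabla_{x}f_{\mathrm{r}}}_{L_{t}^{\infty}Z}\lesssim N_{1}^{-1}M\cdot (\ln\ln M)^{2}\ll M^{-1},
\]
which is stronger than the claimed bound. I expect the only routine care to lie in verifying that the extra factor $M$ from the additional $\nabla_{x}$ is genuinely absorbed by $N_{1}^{-1}$ in each of the six seminorms defining $Z$; because $N_{1}\geq M^{100}$ provides enormous room, none of the pieces will be tight. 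In particular, this is the easiest of the error-term estimates, and the real difficulties are reserved for $Q^{\pm}(f_{\mathrm{b}},f_{\mathrm{b}})$ treated in the sections that follow.
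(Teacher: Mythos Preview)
Your proposal is correct and follows essentially the same approach as the paper: pull the $t_{0}$ integral outside via Minkowski and the transport isometry, use the velocity support $|v|\lesssim N_{1}^{-1}$ to extract $N_{1}^{-1}$, absorb the extra $\nabla_{x}$ as a factor $M$ via the pointwise bounds of Lemma~\ref{lemma:pointwise estimate,fb}, and conclude from $N_{1}\geq M^{100}$. The only cosmetic difference is that the paper records slightly higher powers of $\lra{|t|M^{\frac{d-1}{2}-s}}$ (since two $x$-derivatives of $\exp[-\beta]$ produce $(\nabla_{x}\beta)^{2}+\nabla_{x}^{2}\beta$), but these are all $O((\ln\ln M)^{k})$ and are absorbed by $N_{1}^{-1}$ exactly as you note.
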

\begin{proof}
As we have required that $N_{1}\gg M$ in \eqref{equ:condition,parameters}, the desired decay bound is achieved provided the upper bound carries the smallness factor $N_{1}^{-\delta}$ for some $\delta>0$.
	
\textbf{The $\Vert \nabla_{x}\bullet \Vert_{L_{v}^{2}L_{x}^{2}}$ and $M\Vert \bullet \Vert_{L_{v}^{2}L_{x}^{2}}$ estimates.}

It suffices to deal with the $\Vert \nabla_{x}\bullet \Vert_{L_{v}^{2}L_{x}^{2}}$ norm, as the estimate for the $M\Vert \bullet \Vert_{L_{v}^{2}L_{x}^{2}}$ norm follows the same way.
Noting that $f_{\mathrm{r}}$ is supported on $\lr{|v|\lesssim N_{1}^{-1}}$, we have
\begin{align}\label{equ:v term,H1}
\n{\nabla_{x} (v\cdot \nabla_{x} f_{\mathrm{r}})}_{L_{v}^{2}L_{x}^{2}}\lesssim&
N_{1}^{-1}\n{\Delta_{x}f_{\mathrm{r}}}_{L_{v}^{2}L_{x}^{2}}
\lesssim N_{1}^{-1}M^{2-s}\exp[|t| M^{1-s}]\lra{|t|M^{1-s}}^{2},
\end{align}
where the last inequality follows from the proof of \eqref{equ:upper bound,fr,sobolev} with one $x$-derivative producing a factor of $M$.
We then insert in $|T_{*}|=M^{s-1}(\ln\ln\ln M)$ to get
\begin{align*}
	&\bbn{\nabla_{x} \int_{\tau}^{t}e^{-(t-t_{0})v\cdot \nabla _{x}}(v\cdot \nabla_{x}f)(t_{0})\,dt_{0}}_{L_{v}^{2}L_{x}^{2}}\\
	\lesssim& |T_{*}| \sup_{t_{0}\in[T_{*},0]}\n{\nabla_{x}(v\cdot \nabla_{x} f_{\mathrm{r}})}_{L_{v}^{2}L_{x}^{2}}\\
	\lesssim&M^{s-1}(\ln \ln \ln M) N_{1}^{-1}M^{2-s}(\ln \ln M)^{3}\\
\lesssim&  N_{1}^{-1}M (\ln \ln M)^{4}.
\end{align*}

\textbf{The $N_{2}^{-1}\Vert \bullet \Vert _{L_{v}^{1}L_{x}^{\infty}}$ and $M^{-1}N_{2}^{-1}\Vert \nabla_{x}\bullet\Vert _{L_{v}^{1}L_{x}^{\infty}}$ estimates.}

We only need to treat the $M^{-1}N_{2}^{-1}\Vert \nabla_{x}\bullet\Vert _{L_{v}^{1}L_{x}^{\infty}}$ norm, as the $N_{2}^{-1}\Vert \bullet\Vert _{L_{v}^{1}L_{x}^{\infty}}$ norm can be dealt with in a similar way.
Recalling that $(d=3)$
\begin{equation}
f_{\mathrm{r}}(t,x,v)=M^{\frac{3}{2}-s}N_{1}^{\frac{3}{2}}\exp \left[
-\beta(t,x,v)\right] \chi (Mx)\chi (
N_{1}v),
\end{equation}
we use the pointwise upper bound \eqref{equ:upper bound,fr,k} to get
\begin{align*}
&\n{\nabla_{x}(v\cdot \nabla_{x}f_{\mathrm{r}})}_{L_{v}^{1}L_{x}^{\infty}}\\
\lesssim &N_{1}^{-1}M^{2+\frac{3}{2}-s}N_{1}^{\frac{3}{2}}\n{\exp[-\beta(t,x,v)](\nabla^{2} \chi)(Mx)\chi(N_{1}v)}_{L_{v}^{1}L_{x}^{\infty}}\\
	&+N_{1}^{-1}M^{1+\frac{3}{2}-s}N_{1}^{\frac{3}{2}}\n{\nabla_{x}\beta(t,x,v)\exp[-\beta(t,x,v)](\nabla \chi)(Mx)\chi(N_{1}v)}_{L_{v}^{1}L_{x}^{\infty}}\\
&+N_{1}^{-1}M^{\frac{3}{2}-s}N_{1}^{\frac{3}{2}}\n{\nabla_{x}^{2}\beta(t,x,v)\exp[-\beta(t,x,v)] \chi (Mx)\chi(N_{1}v)}_{L_{v}^{1}L_{x}^{\infty}}\\
&+N_{1}^{-1}M^{\frac{3}{2}-s}N_{1}^{\frac{3}{2}}\n{|\nabla_{x}\beta(t,x,v)|^{2}\exp[-\beta(t,x,v)] \chi (Mx)\chi(N_{1}v)}_{L_{v}^{1}L_{x}^{\infty}}\\
\lesssim& N_{1}^{-1}N_{1}^{-\frac{3}{2}}M^{2-s}M^{\frac{3}{2}} \exp[M^{1-s}|t|]\lra{M^{1-s}|t|}^{2}.
\end{align*}
When multiplied by $|T_{*}|=M^{s-1}(\ln\ln\ln M)$, this gives
\begin{align}\label{equ:vfr,L1}
&M^{-1}N_{2}^{-1} \bbn{\nabla_{x} \int_{\tau}^{t}e^{-(t-t_{0})v\cdot \nabla _{x}}(v\cdot \nabla_{x}f)(t_{0})\,dt_{0}}_{L_{v}^{1}L_{x}^{\infty}}\\
\lesssim& |T_{*}|N_{1}^{-\frac{5}{2}}N_{2}^{-1}M^{1-s}M^{\frac{3}{2}}\exp[M^{1-s}|T_{*}|]\lra{M^{1-s}|T_{*}|}^{2}\notag\\
\lesssim& N_{1}^{-\frac{5}{2}}N_{2}^{-1}M^{\frac{3}{2}} (\ln \ln M)^{4}. \notag
\end{align}

\textbf{The $N_{2}^{\frac{1}{5}}\Vert \bullet \Vert _{L_{v}^{\frac{5}{3}}L_{x}^{\infty}}$ and $M^{-1}N_{2}^{\frac{1}{5}}\Vert \nabla_{x}\bullet\Vert _{L_{v}^{\frac{5}{3}}L_{x}^{\infty}}$ estimates.}

These two norms can be estimated in the same manner as \eqref{equ:vfr,L1} with the $L_{v}^{1}$ norm replaced by the
$L_{v}^{\frac{5}{3}}$ norm. Therefore, we also have
\begin{align}
&M^{-1}N_{2}^{\frac{1}{5}} \bbn{\nabla_{x} \int_{\tau}^{t}e^{-(t-t_{0})v\cdot \nabla _{x}}(v\cdot \nabla_{x}f)(t_{0})\,dt_{0}}_{L_{v}^{\frac{5}{3}}L_{x}^{\infty}}\\
\lesssim& |T_{*}|N_{1}^{-1}N_{1}^{-\frac{3}{10}}N_{2}^{\frac{1}{5}}M^{1-s}M^{\frac{3}{2}}\exp[M^{1-s}|T_{*}|]\lra{M^{1-s}|T_{*}|}^{2}\notag\\
\lesssim& N_{1}^{-1}N_{1}^{-\frac{3}{10}}N_{2}^{\frac{1}{5}}M^{\frac{3}{2}} (\ln \ln M)^{4}\notag\\
\lesssim& N_{1}^{-\frac{11}{10}}M^{\frac{3}{2}} (\ln \ln M)^{4},\notag
\end{align}
where in the last inequality we have used that $N_{1}\geq N_{2}$.
\end{proof}

\subsubsection{Analysis of $Q^{+}(f_{\mathrm{r}},f_{\mathrm{b}})$, $Q^{\pm}(f_{\mathrm{b}},f_{\mathrm{r}})$, and
$Q^{\pm}(f_{\mathrm{r}},f_{\mathrm{r}})$}\label{section:Analysis of term2}
Before getting into the analysis of the terms, we recall some estimates on $f_{\mathrm{b}}$ and $f_{\mathrm{r}}$, which are established in Lemma \ref{lemma:bounds on fb} and Lemma \ref{lemma:z-norm bounds on fr}. That is,
\begin{align}
\n{f_{\mathrm{b}}}_{L^{\infty}([T_{*},0];Z)}\lesssim& M^{1-s}, \label{equ:z-norm,fb,used}\\
\n{f_{\mathrm{r}}}_{L^{\infty}([T_{*},0];Z)}\lesssim& M^{1-s}(\ln \ln M)^{2},\label{equ:z-norm,fr,used}\\
\n{f_{\mathrm{r}}}_{L_{v}^{1}L_{x}^{\infty}}^{\frac{1}{6}}\n{f_{\mathrm{r}}}_{L_{v}^{\frac{5}{3}}L_{x}^{\infty}}^{\frac{5}{6}}\lesssim & N_{1}^{-\frac{1}{2}}M^{\frac{3}{2}-s}(\ln \ln M)^{2},\label{equ:z-norm,lv1,used}\\
M^{-1}\n{\nabla_{x}f_{\mathrm{r}}}_{L_{v}^{1}L_{x}^{\infty}}^{\frac{1}{6}}\n{\nabla_{x}f_{\mathrm{r}}}_{L_{v}^{\frac{5}{3}}
L_{x}^{\infty}}^{\frac{5}{6}}\lesssim& N_{1}^{-\frac{1}{2}}M^{\frac{3}{2}-s}
(\ln \ln M)^{2},\label{equ:z-norm,lv1,H1,used}
\end{align}
where the last two inequalities \eqref{equ:z-norm,lv1,used}--\eqref{equ:z-norm,lv1,H1,used} follow from estimates \eqref{equ:fr,Z-norm,L1}--\eqref{equ:vfr,L35,H1}. In addition, during the proof of the bilinear estimate on $Q^{\pm}$ in Lemma \ref{lemma:binlinear estimate} we postpone to Section \ref{section:Bounds on the Correction Term}, we actually have that
\begin{align*}
\n{Q^{-}(f_{\mathrm{b}},f_{\mathrm{r}})}_{Z}\lesssim \n{f_{\mathrm{b}}}_{Z}\lrs{\n{f_{\mathrm{r}}}_{L_{v}^{1}L_{x}^{\infty}}^{\frac{1}{6}}\n{f_{\mathrm{r}}}_{L_{v}^{\frac{5}{3}}L_{x}^{\infty}}^{\frac{5}{6}}
+M^{-1}\n{\nabla_{x}f_{\mathrm{r}}}_{L_{v}^{1}L_{x}^{\infty}}^{\frac{1}{6}}\n{\nabla_{x}f_{\mathrm{r}}}_{L_{v}^{\frac{5}{3}}L_{x}^{\infty}}^{\frac{5}{6}}},\\
\n{Q^{+}(f_{\mathrm{b}},f_{\mathrm{r}})}_{Z}\lesssim \n{f_{\mathrm{b}}}_{Z}\lrs{\n{f_{\mathrm{r}}}_{L_{v}^{1}L_{x}^{\infty}}^{\frac{1}{6}}\n{f_{\mathrm{r}}}_{L_{v}^{\frac{5}{3}}L_{x}^{\infty}}^{\frac{5}{6}}
+M^{-1}\n{\nabla_{x}f_{\mathrm{r}}}_{L_{v}^{1}L_{x}^{\infty}}^{\frac{1}{6}}\n{\nabla_{x}f_{\mathrm{r}}}_{L_{v}^{\frac{5}{3}}L_{x}^{\infty}}^{\frac{5}{6}}},\\
\n{Q^{+}(f_{\mathrm{r}},f_{\mathrm{b}})}_{Z}\lesssim \n{f_{\mathrm{b}}}_{Z}\lrs{\n{f_{\mathrm{r}}}_{L_{v}^{1}L_{x}^{\infty}}^{\frac{1}{6}}\n{f_{\mathrm{r}}}_{L_{v}^{\frac{5}{3}}L_{x}^{\infty}}^{\frac{5}{6}}
+M^{-1}\n{\nabla_{x}f_{\mathrm{r}}}_{L_{v}^{1}L_{x}^{\infty}}^{\frac{1}{6}}\n{\nabla_{x}f_{\mathrm{r}}}_{L_{v}^{\frac{5}{3}}L_{x}^{\infty}}^{\frac{5}{6}}},\\
\n{Q^{\pm}(f_{\mathrm{r}},f_{\mathrm{r}})}_{Z}\lesssim \n{f_{\mathrm{r}}}_{Z}\lrs{\n{f_{\mathrm{r}}}_{L_{v}^{1}L_{x}^{\infty}}^{\frac{1}{6}}\n{f_{\mathrm{r}}}_{L_{v}^{\frac{5}{3}}L_{x}^{\infty}}^{\frac{5}{6}}
+M^{-1}\n{\nabla_{x}f_{\mathrm{r}}}_{L_{v}^{1}L_{x}^{\infty}}^{\frac{1}{6}}\n{\nabla_{x}f_{\mathrm{r}}}_{L_{v}^{\frac{5}{3}}L_{x}^{\infty}}^{\frac{5}{6}}}.
\end{align*}
Note that such an estimate is not possible for $Q^{-}(f_{\mathrm{r}},f_{\mathrm{b}})$, which is not contained in the error terms.
Therefore, for
$$(sgn,1,2)\in\lr{(+,r,b),(\pm,b,r),(\pm,r,r)}, $$
by estimates \eqref{equ:z-norm,fb,used}--\eqref{equ:z-norm,lv1,used}, we have
\begin{align*}
&\bbn{ \int_{\tau }^{t}e^{-(t-t_{0})v\cdot \nabla_{x}}Q^{sgn}(f_{1},f_{2})(t_{0})\,dt_{0}}_{Z}\\
\lesssim& |T_{*}|\lrs{\n{f_{\mathrm{r}}}_{Z}+\n{f_{\mathrm{b}}}_{Z}}\lrs{\n{f_{\mathrm{r}}}_{L_{v}^{1}L_{x}^{\infty}}^{\frac{1}{6}}\n{f_{\mathrm{r}}}_{L_{v}^{\frac{5}{3}}L_{x}^{\infty}}^{\frac{5}{6}}
+M^{-1}\n{\nabla_{x}f_{\mathrm{r}}}_{L_{v}^{1}L_{x}^{\infty}}^{\frac{1}{6}}\n{\nabla_{x}f_{\mathrm{r}}}_{L_{v}^{\frac{5}{3}}L_{x}^{\infty}}^{\frac{5}{6}}}\\
\lesssim& |T_{*}|M^{1-s}(\ln \ln M)^{2}N_{1}^{-\frac{1}{2}}M^{\frac{3}{2}-s}
(\ln \ln M)^{2}\\
\lesssim& N_{1}^{-\frac{1}{2}}M^{\frac{3}{2}-s}(\ln \ln  M)^{5},
\end{align*}
where in the last inequality we have inserted in $|T_{*}|=M^{s-1}(\ln\ln\ln M)$. This bound suffices for our goal as it carries the smallness factor $N_{1}^{-\frac{1}{2}}$.

\subsubsection{Analysis of $Q^{-}(f_{\mathrm{b}},f_{\mathrm{b}})$} \label{section:Analysis of Q-}

\begin{lemma}
For $T_{*}\leq  \tau\leq t \leq 0$,
	\begin{align}
		\bbn{ \int_{\tau}^{t}e^{-(t-t_{0})v\cdot \nabla _{x}}Q^{-}(f_{\mathrm{b}},f_{\mathrm{b}})(t_{0})\,dt_{0}} _{Z}\lesssim M^{-1}.
	\end{align}
\end{lemma}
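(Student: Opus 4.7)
Because $f_{\mathrm{b}}$ solves the free transport equation~\eqref{equ:fb equation}, we have the identity $e^{-(t-t_0)v\cdot\nabla_x}f_{\mathrm{b}}(t_0)=f_{\mathrm{b}}(t)$. I would therefore begin by observing, for $d=3$ and $\gamma=-1$, that
\begin{equation*}
e^{-(t-t_0)v\cdot\nabla_x}Q^-(f_{\mathrm{b}},f_{\mathrm{b}})(t_0,x,v)
 = f_{\mathrm{b}}(t,x,v)\,\Lambda_{t_0}\!\bigl(x-(t-t_0)v,\,v\bigr),
\end{equation*}
where $\Lambda_{s}(y,v):=\|\mathbf{b}\|_{L^1}\int f_{\mathrm{b}}(s,y,u)\,|u-v|^{-1}\,du$ is the collision frequency. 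Factoring out the $t_0$-independent piece $f_{\mathrm{b}}(t,x,v)$, the integral in question collapses to $f_{\mathrm{b}}(t,x,v)\cdot G(t,x,v)$ with $G(t,x,v):=\int_\tau^t\Lambda_{t_0}(x-(t-t_0)v,v)\,dt_0$. Via Hölder's inequality, controlling each component of the $Z$-norm of $f_{\mathrm{b}}\cdot G$ reduces to pairing the matching component of $\|f_{\mathrm{b}}\|_Z$ (which is already estimated in Lemma~\ref{lemma:bounds on fb}) with $\|G\|_{L^\infty_{xv}}$; gradients are distributed by Leibniz, with each $x$-derivative on $K_k$ producing a factor of $M$ absorbed by the $M^{-1}$ weights built into the $Z$-norm.

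To estimate $\|G\|_{L^\infty_{xv}}$, I would interchange the $u$- and $t_0$-integrations and insert the explicit needle representation $f_{\mathrm{b}}(t_0,y,u)=\frac{M^{1-s}}{N_2^{2}}\sum_k K_k(y-ut_0)I_k(u)$. Using the identity $y_0+t_0(v-u)=x-(t-t_0)v-ut_0$ with $y_0:=x-tv$ yields
\begin{equation*}
G(t,x,v)\lesssim\frac{M^{1-s}}{N_2^{2}}\sum_{k=1}^{J}\int\frac{I_k(u)}{|u-v|}\Big(\int_\tau^t K_k\bigl(y_0+t_0(v-u)\bigr)\,dt_0\Big)\,du.
\end{equation*}
The inner time-integral measures the dwell time of the straight line $t_0\mapsto y_0+t_0(v-u)$ inside the needle $\operatorname{supp}(K_k)$, a set of transverse width $M^{-1}$ and longitudinal length $N_2$; elementary geometry bounds it by
\begin{equation*}
\int_\tau^t K_k\bigl(y_0+t_0(v-u)\bigr)\,dt_0\lesssim\min\!\Big(|T_*|,\;\tfrac{M^{-1}}{|P_{e_k}^{\perp}(v-u)|},\;\tfrac{N_2}{|P_{e_k}(v-u)|}\Big),
\end{equation*}
which is much smaller than the naive $|T_*|$ except when $v-u$ is nearly aligned with $e_k$. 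Summing over $k$ against $|u-v|^{-1}I_k(u)$ and splitting the sum into near-aligned versus transverse contributions should produce an $L^\infty_{xv}$ bound on $G$ with sufficient $M$-decay that, combined with $\|f_{\mathrm{b}}\|_Z\lesssim M^{1-s}$ and $|T_*|\sim M^{s-1}\ln\ln\ln M$, the final estimate closes as $\lesssim M^{-1}$.

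The main obstacle is precisely that moving the $t_0$-integration outside yields only
\begin{equation*}
|T_*|\,\|Q^-(f_{\mathrm{b}},f_{\mathrm{b}})\|_Z\lesssim|T_*|\,\|f_{\mathrm{b}}\|_Z^{2}\sim M^{1-s}\ln\ln\ln M\gg 1,
\end{equation*}
so the crude bilinear estimate is far too weak. The savings must be extracted by performing the $t_0$-integration \emph{first}, as signaled in the outline preceding this lemma, and quantifying this requires delicate angular bookkeeping over the $\sim(MN_2)^{2}$ needles: separating the few ``on-axis'' needles (with $e_k$ close to $(v-u)/|v-u|$, where the dwell time is large but the index count is small) from the many ``transverse'' needles (where the dwell time is of order $(MN_2|\text{angle}|)^{-1}$ but the number of indices contributing grows), while simultaneously handling the local $L^1$ singularity of $|u-v|^{-1}$ when $u$ and $v$ lie in the same or adjacent needles. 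This geometric/combinatorial analysis, which refines the dwell-time argument already used in Lemma~\ref{lemma:pointwise estimate,fb}, is the most intricate step.
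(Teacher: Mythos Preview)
Your factorization $e^{-(t-t_0)v\cdot\nabla_x}Q^-(f_{\mathrm{b}},f_{\mathrm{b}})(t_0)=f_{\mathrm{b}}(t)\,\Lambda_{t_0}(x-(t-t_0)v,v)$ is correct, and the resulting strategy of bounding $\|G\|_{L^\infty_{xv}}$ on $\operatorname{supp}_v f_{\mathrm{b}}$ via the needle dwell-time $\min(|T_*|,\,M^{-1}/|P_{e_k}^\perp(v-u)|)$ does close: for $v\in I_j$ the diagonal $|k-j|\lesssim 1$ gives $\frac{M^{1-s}}{N_2^2}(M^{-2}N_2)^{2/3}|T_*|\lesssim M^{-4/3}N_2^{-4/3}$, while the off-diagonal uses $|P_{e_k}^\perp(v-u)|\sim N_2\sin\alpha_{j,k}$ together with the paper's bound \eqref{equ:estimate,IjIk} and $\sum_{k\neq j}(\sin\alpha_{j,k})^{-1}\lesssim (MN_2)^2$ to yield $M^{-s}N_2^{-1}\ln(MN_2)$. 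This is a genuinely different route from the paper. The paper does \emph{not} use the $t_0$-gain uniformly: for the $M\|\cdot\|_{L_v^2L_x^2}$ component it simply bounds by $|T_*|\,\|Q^-(f_{\mathrm{b}},f_{\mathrm{b}})\|_{L_v^2L_x^2}$ and extracts $N_2^{-1/2}$ smallness directly from a refined $j,k$-expansion of $Q^-(f_{\mathrm{b}},f_{\mathrm{b}})$; the $t_0$-integration is performed first only for the $L_v^1L_x^\infty$ component, and there via a \emph{different} dwell-time---that of the trajectory $x-v(t-t_0)$ through the aggregated profile $\sum_k\tilde K_k\lesssim (N_2/(|x|+M^{-1}))^2$---after first summing in $k$. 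Your approach is more unified (one $L^\infty$ bound on $G$ handles every $Z$-component at once via H\"older) and reuses exactly the angular combinatorics that the paper deploys for $Q^+(f_{\mathrm{b}},f_{\mathrm{b}})$; the paper's approach avoids the full angular sum for the $L_v^2L_x^2$ piece but pays by treating each norm separately. Both give the required $N_2^{-\delta}$ smallness.
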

\begin{proof}
As we have required that $N_{2}\gg M$ in \eqref{equ:condition,parameters}, the desired smallness comes from the factor $N_{2}^{-\delta}$ for some $\delta>0$.
As the $x$-derivative, which is put on $f_{\mathrm{b}}$, produces a factor of $M$, it is sufficient to estimate the $L_{v}^{2}L_{x}^{2}$, $L_{v}^{1}L_{x}^{\infty}$ and $L_{v}^{\frac{5}{3}}L_{x}^{\infty}$ norms.
	
\textbf{The $M\Vert \bullet \Vert
	_{L_{v}^{2}L_{x}^{2}}$ estimate.}

Note that
\begin{equation}\label{equ:equ:Q-,fb,fb,pointwise,first}
	M\bbn{ \int_{\tau}^{t}e^{-(t-t_{0})v\cdot \nabla _{x}}Q^{-}(f_{\mathrm{b}},f_{\mathrm{b}})(t_{0})\,dt_{0}}_{L_{v}^{2}L_{x}^{2}}\lesssim |T_{*}|M\n{Q^{-}(f_{\mathrm{b}},f_{\mathrm{b}})}_{L_{t}^{\infty }(T_{*},0;L_{v}^{2}L_{x}^{2})}.
\end{equation}
We only need to control $M\n{Q^{-}(f_{\mathrm{b}},f_{\mathrm{b}})}_{L_{t}^{\infty }L_{v}^{2}L_{x}^{2}}$.
Recall the upper bound \eqref{equ:pointwise estimate on fb} that
 \begin{align}\label{equ:pointwise estimate on fb,recall}
 f_{\mathrm{b}}(t,x,u)\lesssim& \frac{M^{1-s}}{N_{2}^{2}}\sum_{j=1}^{J}\wt{K}_{j}(x)I_{j}(u),
 \end{align}
 where
 \begin{align*}
  \wt{K}_{j}(x)=\chi (\frac{
		MP_{e_{j}}^{\perp }x}{10})\chi (\frac{P_{e_{j}}x}{10N_{2}}),\quad
I_{j}(u)=\chi(MP_{e_{j}}^{\perp }u)\chi (\frac{10P_{e_{j}}(u-N_{2}e_{j})}{N_{2}}).
 \end{align*}
Then we have
 \begin{align}\label{equ:Q-,fb,fb,pointwise,two cases}
&Q^{-}(f_{\mathrm{b}},f_{\mathrm{b}})(t,x,v)\\
\lesssim &\lrs{\frac{M^{1-s}}{N_{2}^{2}}}^{2} \lrs{\sum_{|j-k|\lesssim 1}\wt{K}_{j}(x)\wt{K}_{k}(x)Q^{-}(I_{j},I_{k})(v)+\sum_{|j-k|\gtrsim 1}\wt{K}_{j}(x)\wt{K}_{k}(x)Q^{-}(I_{j},I_{k})(v)}.\notag
\end{align}

\textbf{Case $I$: $|j-k|\lesssim 1$.}

For the case that $|j-k|\lesssim 1$, the summands in the double sum $\sum_{k}^{J}\sum_{j}^{J}$ are reduced to $(MN_{2})^{2}$. By H\"{o}lder and Hardy-Sobolev-Littlewood inequality \eqref{equ:endpoint estimate,hls}, we obtain
\begin{align}\label{equ:Q-,fb,fb,pointwise,case 1}
&\lrs{\frac{M^{1-s}}{N_{2}^{2}}}^{2}\bbn{\sum_{|j-k|\lesssim 1}\wt{K}_{j}(x)\wt{K}_{k}(x)Q^{-}(I_{j},I_{k})(v)}_{L_{x}^{2}L_{v}^{2}}\\
\lesssim &\lrs{\frac{M^{1-s}}{N_{2}^{2}}}^{2}\bbn{\sum_{j}\wt{K}_{j}(x)I_{j}(v)}_{L_{x}^{2}L_{v}^{2}}\n{\wt{K}_{k}(x)}_{L_{x}^{\infty}}\bbn{\int \frac{I_{k}(u)}{|u-v|}du}_{L_{v}^{\infty}}\notag\\
\lesssim &\lrs{\frac{M^{1-s}}{N_{2}^{2}}}^{2}\bbn{\sum_{j}\wt{K}_{j}(x)I_{j}(v)}_{L_{x}^{2}L_{v}^{2}}
\n{I_{k}}_{L_{v}^{1}}^{\frac{1}{3}}\n{I_{k}}_{L_{v}^{2}}^{\frac{2}{3}}\notag\\
\lesssim &\lrs{\frac{M^{1-s}}{N_{2}^{2}}}^{2} (M^{-1}N_{2}^{2})(M^{-2}N_{2})^{\frac{2}{3}}\notag\\
=&M^{-\frac{1}{3}-2s}N_{2}^{-\frac{4}{3}}\notag
\end{align}
where in the second-to-last inequality we have used the disjointness of the $v$-support to get
\begin{align}
\bbn{\sum_{j}\wt{K}_{j}(x)I_{j}(v)}_{L_{x}^{2}L_{v}^{2}}^{2}\lesssim \sum_{j}\n{\wt{K}_{j}(x)}_{L_{x}^{2}}^{2}\n{I_{j}(v)}_{L_{v}^{2}}^{2}\lesssim (MN_{2})^{2}(M^{-2}N_{2})^{2}=M^{-2}N_{2}^{4}.
\end{align}

\textbf{Case $II$: $|j-k|\gtrsim 1$.}

For the case that $|j-k|\gtrsim 1$, this implies that $\sin \alpha_{j,k}\gtrsim (MN_{2})^{-1}$, where $\alpha_{j,k}$ denotes the angle between $e_{j}$ and $e_{k}$.
Then we have
\begin{align*}
Q^{-}(I_{j},I_{k})(v)=&I_{j}(v)\int \frac{I_{k}(u)}{|u-v|}du\\
=&I_{j}(v)\int \frac{1}{|u-v|}\chi
	(MP_{e_{k}}^{\perp }u)\chi (\frac{10P_{e_{k}}(u-N_{2}e_{k})}{N_{2}})du\\
\lesssim &I_{j}(v)\int \frac{1}{|P_{e_{k}}(u-v)|+|P_{e_{k}}^{\perp}(u-v)|}\chi
	(MP_{e_{k}}^{\perp }u)\chi (\frac{10P_{e_{k}}(u-N_{2}e_{k})}{N_{2}})du.
\end{align*}
Due to the $v$-support and $u$-support, we write
$$v=ae_{j}+ce_{j}^{\perp},\quad u=be_{k}+de_{k}^{\perp}$$
where $a\sim b\sim N_{2}$ and $c\sim d\sim M^{-1}$. Therefore, this gives
\begin{align}\label{equ:lower bound,ej,ek}
|P_{e_{k}}^{\perp}(u-v)|=&|P_{e_{k}}^{\perp}(be_{k}+de_{k}^{\perp}-ae_{j}-ce_{j}^{\perp})|\\
\gtrsim &a|P_{e_{k}}^{\perp}e_{j}|-d-c\notag\\
\gtrsim &N_{2}\sin \alpha_{j,k}-M^{-1}\gtrsim M^{-1}\notag
\end{align}
where in the last inequality we have used that $\sin \alpha_{j,k}\gtrsim (MN_{2})^{-1}$. By the estimate \eqref{equ:lower bound,ej,ek}, we then set $\xi=\lra{u,e_{k}}$ to get
\begin{align}\label{equ:estimate,IjIk}
I_{j}(v)\int \frac{I_{k}(u)}{|u-v|}du\lesssim &I_{j}(v)\int \frac{1}{|P_{e_{k}}(u-v)|+M^{-1}}\chi
	(MP_{e_{k}}^{\perp }u)\chi (\frac{10P_{e_{k}}(u-N_{2}e_{k})}{N_{2}})du\\
\lesssim &I_{j}(v)\int \frac{1}{|\xi-\lra{e_{k},v}|+M^{-1}}\chi
	(M\xi^{\perp})\chi (\frac{10(\xi-N_{2})}{N_{2}})d\xi d\xi^{\perp}\notag\\
\lesssim &I_{j}(v)M^{-2}\int\frac{1}{|\xi|+M^{-1}}\chi(\frac{10(\xi+\lra{e_{k},v}-N_{2})}{N_{2}})d\xi\notag\\
=&I_{j}(v)M^{-2}N_{2}\int_{-1}^{1}\frac{M}{MN_{2}|\xi|+1}\chi(\frac{10(N_{2}\xi+\lra{e_{k},v}-N_{2})}{N_{2}})d\xi\notag\\
\lesssim &I_{j}(v)M^{-2}N_{2}\int_{-1}^{1}\frac{M}{MN_{2}|\xi|+1}d\xi\notag\\
\lesssim &I_{j}(v)\frac{\ln(MN_{2})}{M^{2}}.\notag
\end{align}
Consequently, we arrive at
\begin{align}
&\lrs{\frac{M^{1-s}}{N_{2}^{2}}}^{2}\sum_{|j-k|\gtrsim 1}\wt{K}_{j}(x)\wt{K}_{k}(x)Q^{-}(I_{j},I_{k})\\
=&\lrs{\frac{M^{1-s}}{N_{2}^{2}}}^{2}\sum_{|j-k|\gtrsim 1}\wt{K}_{j}(x)\wt{K}_{k}(x)I_{j}(v)\int \frac{I_{k}(u)}{|u-v|}du\notag\\
\lesssim & \lrs{\frac{M^{1-s}}{N_{2}^{2}}}^{2}\sum_{j,k}\wt{K}_{j}(x)\wt{K}_{k}(x)I_{j}(v)\frac{\ln(MN_{2})}{M^{2}}\notag\\
\leq& \lrs{\frac{M^{1-s}}{N_{2}^{2}}}^{2} \frac{\ln(MN_{2})}{M^{2}} \lrs{\frac{N_{2}}{|x|+M^{-1}}}^{2}\chi(\frac{x}{N_{2}}) \sum_{j}\wt{K}_{j}(x)I_{j}(v)\notag
\end{align}
where in the last inequality we have used that
\begin{align}\label{equ:fb,sum}
\sum_{k}\wt{K}_{k}(x)=\sum_{k}^{J}\chi (\frac{MP_{e_{k}}^{\perp }x}{10})\chi (\frac{P_{e_{k}}x}{N_{2}})\lesssim\lrs{\frac{N_{2}}{
|x|+M^{-1}}}^{2}\chi (\frac{x}{N_{2}}).
\end{align}
To see (\ref{equ:fb,sum}), we might as well take $x=(0,0,|x|)$ with $M^{-1}\leq |x|\leq N_{2}$. Let $\theta_{j}$ be the angle between $e_{j}$ and $(0,0,1)$. Then, we have
\begin{align*}
\sum_{j}^{J}\chi (\frac{MP_{e_{j}}^{\perp }x}{10})\chi (\frac{P_{e_{j}}x}{N_{2}})=\sum_{j}^{J}\chi (\frac{M|x|\sin \theta_{j}}{10})=\sum_{j:\sin\theta_{j}\lesssim \frac{1}{|x|M}}1\sim \frac{(MN_{2})^{2}}{(|x|M)^{2}}= \frac{N_{2}^{2}}{|x|^{2}}.
\end{align*}
Applying the $L_{v}^{2}L_{x}^{2}$ norm, we have
\begin{align}\label{equ:Q-,fb,fb,pointwise,case 2}
&\lrs{\frac{M^{1-s}}{N_{2}^{2}}}^{2}\bbn{\sum_{|j-k|\gtrsim 1}\wt{K}_{j}(x)\wt{K}_{k}(x)Q^{-}(I_{j},I_{k})}_{L_{v}^{2}L_{x}^{2}}\\
\lesssim& \lrs{\frac{M^{1-s}}{N_{2}^{2}}}^{2} \frac{\ln(MN_{2})}{M^{2}} \bbn{\sum_{j}\wt{K}_{j}(x)I_{j}(v)}_{L_{v}^{2}L_{x}^\infty}
\bbn{\lrs{\frac{N_{2}}{|x|+M^{-1}}}^{2}\chi(\frac{x}{N_{2}})}_{L_{x}^{2}}\notag\\
\lesssim& \lrs{\frac{M^{1-s}}{N_{2}^{2}}}^{2} \frac{\ln(MN_{2})}{M^{2}} N_{2}^{\frac{3}{2}} (M^{\frac{1}{2}}N_{2}^{2})\notag\\
=& M^{\frac{1}{2}-2s}N_{2}^{-\frac{1}{2}}\ln(MN_{2})\notag
\end{align}
where we have used that
\begin{align*}
&\bbn{\sum_{j}\wt{K}_{j}(x)I_{j}(v)}_{L_{v}^{2}L_{x}^\infty}\lesssim \bbn{\sum_{j}I_{j}(v)}_{L_{v}^{2}}\sim N_{2}^{\frac{3}{2}},
\end{align*}
and
\begin{align}
& \bbn{\lrs{\frac{N_{2}}{|x|+M^{-1}}}^{2}\chi(\frac{x}{N_{2}})}_{L_{x}^{2}}=M^{\frac{1}{2}}N_{2}^{2}
\bbn{\lrs{\frac{1}{|x|+1}}^{2}\chi(\frac{x}{MN_{2}})}_{L_{x}^{2}} \lesssim M^{\frac{1}{2}}N_{2}^{2}.\label{equ:pointwise estimate,fb}
\end{align}

Combining estimates \eqref{equ:Q-,fb,fb,pointwise,case 1} and \eqref{equ:Q-,fb,fb,pointwise,case 2} in the two cases, we finally reach
\begin{align*}
M\n{Q^{-}(f_{\mathrm{b}},f_{\mathrm{b}})}_{L_{v}^{2}L_{x}^{2}}\lesssim M^{\frac{3}{2}-2s}N_{2}^{-\frac{1}{2}}\ln(MN_{2}).
\end{align*}
Together with \eqref{equ:equ:Q-,fb,fb,pointwise,first}, we insert in $|T_{*}|=M^{s-1}(\ln \ln \ln M) $ to obtain
\begin{align*}
M\bbn{ \int_{\tau}^{t}e^{-(t-t_{0})v\cdot \nabla _{x}}Q^{-}(f_{\mathrm{b}},f_{\mathrm{b}})(t_{0})\,dt_{0}}_{L_{v}^{2}L_{x}^{2}}\lesssim N_{2}^{-\frac{1}{2}} M^{\frac{1}{2}-s}\ln(MN_{2})(\ln \ln \ln M),
\end{align*}
which suffices for our goal.

\textbf{The $N_{2}^{-1}\Vert \bullet\Vert _{L_{v}^{1}L_{x}^{\infty}}$ estimate.}

For convenience, we use the notation
\begin{equation*}
	D^{-}=\int_{\tau}^{t}e^{-(t-t_{0})v\cdot \nabla
		_{x}}Q^{-}(f_{\mathrm{b}},f_{\mathrm{b}})(t_{0})dt_{0}.
\end{equation*}

From the analysis on $Q^{-}(f_{\mathrm{b}},f_{\mathrm{b}})$ in estimates \eqref{equ:Q-,fb,fb,pointwise,case 1} and \eqref{equ:Q-,fb,fb,pointwise,case 2}, we actually get a pointwise estimate on $Q^{-}(f_{\mathrm{b}},f_{\mathrm{b}})$ that
\begin{align*}
Q^{-}(f_{\mathrm{b}},f_{\mathrm{b}})\lesssim \lrs{\frac{M^{1-s}}{N_{2}^{2}}}^{2}\sum_{j}\wt{K}_{j}(x)I_{j}(v)\lrc{\frac{\ln(MN_{2})}{M^{2}}
\lrs{\frac{N_{2}}{|x|+M^{-1}}}^{2}\chi (\frac{x}{N_{2}})+(M^{-2}N_{2})^{\frac{2}{3}}}.
\end{align*}
Expanding $D^{-}$ gives that
\begin{align*}
D^{-}=&\int_{\tau}^{t}Q^{-}(f_{\mathrm{b}},f_{\mathrm{b}})(t_{0},x-v(t-t_{0}),v)dt_{0}\\
\lesssim &\int_{\tau}^{t} \lrs{\frac{M^{1-s}}{N_{2}^{2}}}^{2} \sum_{j}\wt{K}_{j}(x-v(t-t_{0}))I_{j}(v) \\
&\times \lrc{\frac{\ln(MN_{2})}{M^{2}} \lrs{\frac{N_{2}}{|x-v(t-t_{0})|+M^{-1}}}^{2}\chi(\frac{x-v(t-t_{0})}{N_{2}})+(M^{-2}N_{2})^{\frac{2}{3}}}dt_{0}\\
\lesssim &\frac{M^{-2s}}{N_{2}^{4}}I(v)
 \lrc{\ln (MN_{2}) \int_{T_{*}}^{0}  \lrs{\frac{N_{2}}{
		|x-v(t-t_{0})|+M^{-1}}}^{2} \chi (\frac{x-v(t-t_{0})}{N_{2}})dt_{0}+(MN_{2})^{\frac{2}{3}}},
\end{align*}
where in the last inequality we have used that
\begin{align*}
\sum_{j}\wt{K}_{j}(x-v(t-t_{0}))I_{j}(v)\leq \sum_{j}I_{j}(v)=:I(v)\sim 1_{\lr{\frac{9N_{2}}{10}\leq|v|\leq \frac{11N_{2}}{10}}}(v).
\end{align*}

We then deal with the time integral. By change of variable, we have
\begin{align*}
	& I(v)\int_{T_{*}}^{0}  \lrs{\frac{N_{2}}{
		|x-v(t-t_{0})|+M^{-1}}}^{2} \chi (\frac{x-v(t-t_{0})}{N_{2}})dt_{0}\\
	\leq&I(v)\int_{T_{*}}^{|T_{*}|} \lrs{\frac{MN_{2}}{|Mx-Mv\sigma|+1}}^{2}
\chi (\frac{x-v\sigma}{N_{2}})d\sigma\\
	\leq& \frac{(MN_{2})^{2}I(v)}{M|v|}\int_{-M|T_{*}||v|}^{M|T_{*}||v|}\lrs{\frac{1}{\babs{\sigma-M|x|}+1}}^{2}d\sigma\\
	\lesssim& MN_{2}I(v),
\end{align*}
where in the last inequality we have used that $|v|\sim N_{2}$ and
$\int \frac{d\tau}{\lra{\tau}^{2}}\lesssim 1$.
Hence, after carrying out the 1D $dt_{0}$ integral, we arrive at
\begin{align}\label{equ:Q-,fb,fb,L1,final}
	N_{2}^{-1}\n{D^{-}}_{L_{v}^{1}L_{x}^{\infty}}\lesssim&N_{2}^{-1}\frac{M^{-2s}}{N_{2}^{4}} \n{I(v)}_{L_{v}^{1}} \lrc{\ln (MN_{2})MN_{2}+(MN_{2})^{\frac{2}{3}}}
	\\
	\lesssim& N_{2}^{-1}\frac{M^{-2s}}{N_{2}^{4}}  N_{2}^{3} \ln (MN_{2})MN_{2}\notag\\
	=&N_{2}^{-1} M^{1-2s}\ln (MN_{2}).\notag
\end{align}

\textbf{The $N_{2}^{\frac{1}{5}}\Vert \bullet\Vert _{L_{v}^{\frac{5}{3}}L_{x}^{\infty}}$ estimate.}

By the interpolation inequality, we have
\begin{align}\label{equ:Q-,fb,fb,L53}
N_{2}^{\frac{1}{5}}\n{D^{-}}_{L_{v}^{\frac{5}{3}}L_{x}^{\infty}}\leq \lrs{N_{2}^{-1}\n{D^{-}}_{L_{v}^{1}L_{x}^{\infty}}}^{\frac{1}{5}}
\lrs{N_{2}^{\frac{1}{2}}\n{D^{-}}_{L_{v}^{2}L_{x}^{\infty}}}^{\frac{4}{5}}.
\end{align}
For the $L_{v}^{2}L_{x}^{\infty}$ norm on $D^{-}$, by H\"{o}lder inequality, we have
\begin{align*}
	N_{2}^{\frac{1}{2}}\n{D^{-}}_{L_{v}^{2}L_{x}^{\infty}}\lesssim& N_{2}^{\frac{1}{2}}|T_{*}|\n{Q^{-}(f_{\mathrm{b}},f_{\mathrm{b}})}_{L_{v}^{2}L_{x}^{\infty}}\\
	\lesssim &N_{2}^{\frac{1}{2}}|T_{*}| \bbn{\int \frac{f_{\mathrm{b}}(x,u)}{|u-v|}du}_{L_{v}^{\infty}L_{x}^{\infty}}
	\n{f_{\mathrm{b}}}_{L_{v}^{2}L_{x}^{\infty}}.
\end{align*}
We then use the $L^{\infty}$ estimate \eqref{equ:endpoint estimate,hls} in Lemma \ref{lemma:endpoint estimate,hls} and interpolation inequality to get
\begin{align*}\bbn{\int \frac{f_{\mathrm{b}}(x,u)}{|u-v|}du}_{L_{v}^{\infty}L_{x}^{\infty}}\lesssim \n{f_{\mathrm{b}}}_{L_{v}^{1}L_{x}^{\infty}}^{\frac{1}{6}}\n{f_{\mathrm{b}}}_{L_{v}^{\frac{5}{3}}L_{x}^{\infty}}^{\frac{5}{6}}\lesssim
N_{2}^{-1}\n{f_{\mathrm{b}}}_{L_{v}^{1}L_{x}^{\infty}}+N_{2}^{\frac{1}{5}}\n{f_{\mathrm{b}}}_{L_{v}^{\frac{5}{3}}L_{x}^{\infty}}\leq \n{f_{\mathrm{b}}}_{Z}.
\end{align*}
By the $Z$-norm bound on $f_{\mathrm{b}}$ in Lemma \ref{lemma:bounds on fb}, we have that
$$\n{f_{\mathrm{b}}}_{Z}\lesssim M^{1-s},\quad N_{2}^{\frac{1}{2}}\n{f_{\mathrm{b}}}_{L_{v}^{2}L_{x}^{\infty}}\lesssim M^{1-s}.$$
Thus, inserting in $|T_{*}|=M^{s-1}(\ln\ln\ln M)$, we obtain
\begin{align}\label{equ:Q-,fb,fb,L2}
N_{2}^{\frac{1}{2}}\n{D^{-}}_{L_{v}^{2}L_{x}^{\infty}}
    \lesssim&  M^{1-s}(\ln\ln\ln M).
\end{align}
Combining estimates \eqref{equ:Q-,fb,fb,L1,final}, \eqref{equ:Q-,fb,fb,L53} and \eqref{equ:Q-,fb,fb,L2}, we reach
\begin{align}
N_{2}^{\frac{1}{5}}\n{D^{-}}_{L_{v}^{\frac{5}{3}}L_{x}^{\infty}}\lesssim N_{2}^{-\frac{1}{5}}M^{1-\frac{6}{5}s}\ln (MN_{2})(\ln\ln\ln M).
\end{align}
This bound is enough as it carries the smallness parameter $N_{2}^{-\frac{1}{5}}$.
\end{proof}
\subsubsection{Analysis of $Q^{+}(f_{\mathrm{b}},f_{\mathrm{b}})$} \label{section:Analysis of Q+}

\begin{lemma}
For $T_{*}\leq  \tau\leq t \leq 0$,
	\begin{align}
		\bbn{\int_{\tau}^{t}e^{-(t-t_{0})v\cdot \nabla _{x}}Q^{+}(f_{\mathrm{b}},f_{\mathrm{b}})(t_{0})\,dt_{0}}_{Z}\lesssim M^{-1}.
	\end{align}
\end{lemma}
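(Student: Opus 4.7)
The plan is to mirror the structure of the preceding $Q^-(f_{\mathrm{b}},f_{\mathrm{b}})$ analysis. Since any spatial derivative landing on $f_{\mathrm{b}}$ produces a factor of $M$, the six $Z$-norm components reduce to three: $M\|\cdot\|_{L^2_v L^2_x}$, $N_2^{-1}\|\cdot\|_{L^1_v L^\infty_x}$, and $N_2^{1/5}\|\cdot\|_{L^{5/3}_v L^\infty_x}$. For the first and the third I will move the $t_0$ integration outside the norm so that it contributes a factor of $|T_*|=M^{s-1}(\ln\ln\ln M)$, while for the $L^1_v L^\infty_x$ norm I will carry out the $t_0$ integration first to exploit the dispersive gain from free transport, exactly as in \eqref{equ:Q-,fb,fb,L1,final}.

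Starting from the needle upper bound \eqref{equ:pointwise estimate on fb,recall} and the fact that $Q^+$ acts only on the $v$ variable, I obtain the pointwise decomposition
\[
Q^+(f_{\mathrm{b}},f_{\mathrm{b}})(t,x,v)\lesssim \Bigl(\frac{M^{1-s}}{N_2^2}\Bigr)^2 \sum_{j,k}\tilde{K}_j(x)\tilde{K}_k(x)\, Q^+(I_j,I_k)(v),
\]
which reduces everything to estimates on the building blocks $Q^+(I_j,I_k)(v)$. Splitting the double sum into near-diagonal $|j-k|\lesssim 1$ (with $O((MN_2)^2)$ pairs) and far-diagonal $|j-k|\gtrsim 1$, I control the near-diagonal piece directly by the bilinear gain estimate \eqref{equ:Q+,bilinear estimate,PM,f,g} of Lemma~\ref{lemma:Q+,bilinear estimate} with $(\gamma,d)=(-1,3)$, and then sum against \eqref{equ:fb,sum}. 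For the far-diagonal piece I use energy conservation $|v^*|^2+|u^*|^2=|v|^2+|u|^2$ together with the constraints $v^*\in\mathrm{supp}\,I_j$, $u^*\in\mathrm{supp}\,I_k$ to pin $(u,\omega)$ into a thin set whose measure yields a $\log(MN_2)/M^2$ gain analogous to \eqref{equ:estimate,IjIk}. Combining the two cases and inserting $|T_*|$, the $M\|\cdot\|_{L^2_v L^2_x}$ bound comes out of the same size as its $Q^-$ counterpart \eqref{equ:Q-,fb,fb,pointwise,case 2}, namely $\lesssim N_2^{-1/2}M^{1/2-s}\log(MN_2)(\ln\ln\ln M)$. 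The $L^1_v L^\infty_x$ estimate is obtained by carrying out the $t_0$ integration first against the transported kernel $(N_2/(|x-v(t-t_0)|+M^{-1}))^2$, producing a one-dimensional integral of size $(M|v|)^{-1}MN_2$, after which $L^1_v$ integration over $|v|\sim N_2$ closes the bound with a factor $N_2^{-1}M^{1-2s}\log(MN_2)$. The $L^{5/3}_v L^\infty_x$ estimate follows by interpolation between the $L^1_v L^\infty_x$ bound and an $L^2_v L^\infty_x$ bound established as in \eqref{equ:Q-,fb,fb,L2}, using this time the $L^\infty_v$ version of Lemma~\ref{lemma:Q+,bilinear estimate}.

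The main obstacle, as anticipated, is the far-diagonal geometric analysis for the gain term: unlike the loss kernel, which factorizes as $f(v)g(u)/|u-v|$, the gain integrand $I_j(v^*)I_k(u^*)$ couples $(u,\omega)$ nonlinearly through the reflection $(v^*,u^*)=(v-\omega\cdot(v-u)\omega,\,u+\omega\cdot(v-u)\omega)$. My plan is to parameterize $\omega$ relative to the bisector of $e_j+e_k$, so that the angle it makes with $e_j-e_k$ quantifies the post-collision needle separation; the conditions $|v^*|,|u^*|\sim N_2$ combined with $|j-k|\gtrsim 1$ then confine $\omega$ to an angular sector of width $\sim (MN_2)^{-1}$. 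The resulting Jacobian reproduces the $\log(MN_2)/M^2$ factor of \eqref{equ:estimate,IjIk}, and the remainder of the argument is a mechanical adaptation of Section~\ref{section:Analysis of Q-}.
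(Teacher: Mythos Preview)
Your $L^{5/3}_v L^\infty_x$ interpolation matches the paper. For $M\|\cdot\|_{L^2_v L^2_x}$ the paper's route is actually simpler than yours: it avoids any near/far diagonal split by squaring out $\|Q^+(f_{\mathrm b},f_{\mathrm b})\|_{L^2_{xv}}^2$ into a four-fold sum, trivially bounding two of the four $\tilde K$ factors by $1$, and applying the convolution estimate \eqref{equ:bilinear estimate,Q+} to $\|Q^+(I_{j_1},I)\|_{L^2_v}$ with $I=\sum_j I_j$. This yields $M\|Q^+(f_{\mathrm b},f_{\mathrm b})\|_{L^2_{xv}}\lesssim N_2^{-1/6}M^{11/6-2s}$ with no geometric case analysis, so your far-diagonal $\omega$-parametrization is not needed here.

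The genuine gap is in your $N_2^{-1}\|\cdot\|_{L^1_v L^\infty_x}$ argument. You propose to reuse the $Q^-$ mechanism, integrating $t_0$ against the transported kernel $\bigl(N_2/(|x-v(t-t_0)|+M^{-1})\bigr)^2$. But that kernel appeared in the $Q^-$ case only because $Q^-(I_j,I_k)(v)=I_j(v)\int I_k(u)|u-v|^{-1}\,du$ keeps the $v$-support confined to the needle $I_j$, so one can cleanly factor $\sum_j\tilde K_j(y)I_j(v)\le I(v)$ and then sum the remaining $\sum_k\tilde K_k(y)$ via \eqref{equ:fb,sum}. For the gain term, $Q^+(I_j,I_k)(v)$ is spread over a shell of radius $\lesssim N_2$, not a needle; there is no analogue of the decoupling $\sum_j\tilde K_j(y)I_j(v)\le I(v)$, and any attempt to pass from $\sum_{j,k}\tilde K_j(y)\tilde K_k(y)Q^+(I_j,I_k)(v)$ to $\bigl(N_2/(|y|+M^{-1})\bigr)^2\, Q^+(I,I)(v)$ overcounts by a factor $J\sim(MN_2)^2$, which destroys the bound. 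Moreover $|v|$ is no longer pinned to $\sim N_2$, so the one-dimensional $t_0$ integral does not automatically return $MN_2$.

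The paper's route for this piece is essentially different: it does \emph{not} invoke the pointwise upper bound \eqref{equ:pointwise estimate on fb,recall}. It keeps the exact $K_k(x-v^*t_0)$ inside $Q^+$, uses the identity $x-v(t-t_0)-v^*t_0=x-vt-P_\omega^\parallel(v^*-u^*)t_0$, and extracts the $t_0$-gain from
\[
\int\chi\bigl(MP_{e_k}^\perp(x-vt-P_\omega^\parallel(v^*-u^*)t_0)\bigr)\,dt_0\lesssim \frac{1}{M\,|P_{e_k}^\perp P_\omega^\parallel(v^*-u^*)|}\,.
\]
The $d\omega$ integration is then controlled by an angular decomposition of $\mathbb S^2$ according to the angles $\phi_k=\angle(\omega,e_k)$ and $\theta=\angle(\omega,v^*-u^*)$, producing the factor $\ln(MN_2)/(MN_2\sin\alpha_{j,k})$; the $du^*\,dv^*$ integral then reuses \eqref{equ:estimate,IjIk}. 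The gain thus comes from the $\omega$-dependence of $P_\omega^\parallel(v^*-u^*)$, not from transporting the summed kernel against $v$, and your sketch does not capture this mechanism.
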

\begin{proof}
In a similar way to estimate $Q^{-}(f_{\mathrm{b}},f_{\mathrm{b}})$, we obtain the desired estimate provided the upper bound carries the smallness factor $N_{2}^{-\delta}$ for some $\delta>0$.
	For convenience, we use the notation
\begin{equation*}
	D^{+}=\int_{\tau}^{t}e^{-(t-t_{0})v\cdot \nabla
		_{x}}Q^{+}(f_{\mathrm{b}},f_{\mathrm{b}})(t_{0})dt_{0}.
\end{equation*}
The $x$-derivative produces the factor of $M$, so we only need to estimate the $L_{v}^{2}L_{x}^{2}$, $L_{v}^{1}L_{x}^{\infty}$ and $L_{v}^{\frac{5}{3}}L_{x}^{\infty}$ norms.

\textbf{The $M\Vert \bullet \Vert
	_{L_{v}^{2}L_{x}^{2}}$ estimate.}

We use again the upper bound \eqref{equ:pointwise estimate on fb} that
 \begin{align}
 f_{\mathrm{b}}(t,x,u)\lesssim& \frac{M^{1-s}}{N_{2}^{2}}\sum_{j=1}^{J}\wt{K}_{j}(x)I_{j}(u),
 \end{align}
  where
  $$\wt{K}_{j}(x)=\chi (\frac{
		MP_{e_{j}}^{\perp }x}{10})\chi (\frac{P_{e_{j}}x}{10N_{2}}),\quad I_{j}(u)=\chi(MP_{e_{j}}^{\perp }u)\chi (\frac{10P_{e_{j}}(u-N_{2}e_{j})}{N_{2}}).$$
Then we expand $Q^{+}(f_{\mathrm{b}},f_{\mathrm{b}})$ to get
\begin{align*}
&\n{Q^{+}(f_{\mathrm{b}},f_{\mathrm{b}})}_{L_{v}^{2}L_{x}^{2}}^{2}\\
\lesssim &
\frac{M^{4-4s}}{N_{2}^{8}}\sum_{j_{1},j_{2},j_{3},j_{4}} \int \wt{K}_{j_{1}}(x)
\wt{K}_{j_{2}}(x)\wt{K}_{j_{3}}(x)\wt{K}_{j_{4}}(x)
 Q^{+}(I_{j_{1}},I_{j_{2}})(v)Q^{+}(I_{j_{3}},I_{j_{4}})(v) dx dv.
\end{align*}
By using that $\wt{K}_{j_{2}}(x)\lesssim 1$ and $\wt{K}_{j_{4}}(x)\lesssim 1$, we obtain
\begin{align*}
	&\sum_{j_{1},j_{2},j_{3},j_{4}} \int \wt{K}_{j_{1}}(x)
	\wt{K}_{j_{2}}(x)\wt{K}_{j_{3}}(x)\wt{K}_{j_{4}}(x)
	Q^{+}(I_{j_{1}},I_{j_{2}})(v)Q^{+}(I_{j_{3}},I_{j_{4}})(v) dx dv\\
	\lesssim&
	\sum_{j_{1},j_{2},j_{3},j_{4}} \int \wt{K}_{j_{1}}(x)
	\wt{K}_{j_{3}}(x)dx
	\int Q^{+}(I_{j_{1}},I_{j_{2}})(v)Q^{+}(I_{j_{3}},I_{j_{4}})(v)  dv\\
	=&\sum_{j_{1},j_{3}} \int \wt{K}_{j_{1}}(x)
	\wt{K}_{j_{3}}(x)dx
	\int Q^{+}(I_{j_{1}},I)(v)Q^{+}(I_{j_{3}},I)(v)  dv
\end{align*}
where
\begin{align*}
I(v)=\sum_{j}^{J}I_{j}(v)\sim 1_{\lr{\frac{9N_{2}}{10}\leq|v|\leq \frac{11N_{2}}{10}}}(v).
\end{align*}
By H\"{o}lder inequality and bilinear estimate \eqref{equ:bilinear estimate,Q+} for $Q^{+}$ in Lemma \ref{lemma:bilinear estimate,Q+},
\begin{align*}
\n{Q^{+}(f_{\mathrm{b}},f_{\mathrm{b}})}_{L_{v}^{2}L_{x}^{2}}^{2}\lesssim&  \frac{M^{4-4s}}{N_{2}^{8}}\sum_{j_{1},j_{3}} \int \wt{K}_{j_{1}}(x)
\wt{K}_{j_{3}}(x)dx
\n{Q^{+}(I_{j_{1}},I)}_{L_{v}^{2}} \n{Q^{+}(I_{j_{3}},I)}_{L_{v}^{2}}\\
\lesssim&\frac{M^{4-4s}}{N_{2}^{8}} \n{I}_{L_{v}^{3}}^{2} \sum_{j_{1},j_{3}} \int \wt{K}_{j_{1}}(x)
\wt{K}_{j_{3}}(x)dx
\n{I_{j_{1}}}_{L_{v}^{\frac{6}{5}}}\n{I_{j_{3}}}_{L^{\frac{6}{5}}}.
\end{align*}
Using $\n{I_{j}}_{L_{v}^{\frac{6}{5}}}\lesssim (M^{-2}N_{2})^{\frac{5}{6}}$, $\n{I}_{L_{v}^{3}}\lesssim N_{2}$, estimates \eqref{equ:fb,sum} and \eqref{equ:pointwise estimate,fb} for the sum, we obtain
\begin{align*}
	\n{Q^{+}(f_{\mathrm{b}},f_{\mathrm{b}})}_{L_{v}^{2}L_{x}^{2}}^{2}\lesssim & \frac{M^{4-4s}}{N_{2}^{8}}(M^{-2}N_{2})^{\frac{5}{3}} N_{2}^{2}\bbn{\sum_{j}\wt{K}_{j}}_{L_{x}^{2}}^{2}\\
	\lesssim &\frac{M^{4-4s}}{N_{2}^{8}}(M^{-2}N_{2})^{\frac{5}{3}} N_{2}^{2}	\bbn{\lrs{\frac{N_{2}}{|x|+M^{-1}}}^{2}\chi(\frac{x}{N_{2}})}_{L_{x}^{2}}^{2}\\
	\lesssim &\frac{M^{4-4s}}{N_{2}^{8}}(M^{-2}N_{2})^{\frac{5}{3}} N_{2}^{2}(MN_{2}^{4})\\
=&  M^{\frac{5}{3}-4s}N_{2}^{-\frac{1}{3}}.
\end{align*}
Thus, we arrive at
\begin{align}\label{equ:Q+,fb,fb,est1,L2}
M\n{Q^{+}(f_{\mathrm{b}},f_{\mathrm{b}})}_{L_{v}^{2}L_{x}^{2}}\lesssim N_{2}^{-\frac{1}{6}} M^{\frac{11}{6}-2s}.
\end{align}
Upon multiplying by the time factor $|T_*|=M^{s-1}(\ln\ln\ln M)$, this yields a desired bound
\begin{align}
	M\n{D^{+}}_{L_{v}^{2}L_{x}^{2}}\lesssim&
	N_{2}^{-\frac{1}{6}} M^{\frac{5}{6}-s}(\ln\ln\ln M).
\end{align}

\textbf{The $N_{2}^{-1}\Vert \bullet\Vert _{L_{v}^{1}L_{x}^{\infty}}$ estimate.}

Recall that
\begin{align*}
f_{\mathrm{b}}(t)=&\frac{M^{1-s}}{N_{2}^{2}}\sum_{j=1}^{J}K_{j}(x-vt)I_{j}(v),
\end{align*}
where
$K_{j}(x)=\chi (MP_{e_{j}}^{\perp }x)\chi (\frac{P_{e_{j}}x}{N_{2}})$, $I_{j}(v)=\chi(MP_{e_{j}}^{\perp }v)\chi (\frac{10P_{e_{j}}(v-N_{2}e_{j})}{N_{2}})$. The gain term is
\begin{equation*}
Q^{+}(f,g)=\int_{\mathbb{S}^{2}}\int_{\mathbb{R}^{3}}B(u-v,\om)f(v^{\ast })g(u^{\ast
})\,du\,d\omega,
\end{equation*}
with the relationship that
\begin{align*}
v^{\ast } =&P_{\omega }^{\Vert }u+P_{\omega }^{\bot }v,\quad u^{\ast
}=P_{\omega }^{\Vert }v+P_{\omega }^{\bot }u, \\
v =&P_{\omega }^{\bot }v^{\ast }+P_{\omega }^{\Vert }u^{\ast },\quad%
u=P_{\omega }^{\Vert }v^{\ast }+P_{\omega }^{\bot }u^{\ast }.
\end{align*}
Then, expanding $D^{+}$ gives
\begin{align*}
D^{+} =&\frac{M^{2-2s}}{N_{2}^{4}}\sum_{k}^{J}\sum_{j}^{J}\int_{\tau}^{t}e^{-(t-t_{0})v\cdot \nabla_{x}}Q^{+}(K_{k}(x-tv)I_{k}(v),K_{j}(x-tv)I_{j}(v))(t_{0})dt_{0}\\
=&\frac{M^{2-2s}}{N_{2}^{4}}\sum_{k}^{J}\sum_{j}^{J}\int_{
\mathbb{S}^{2}} \int_{\R^{3}} B(u-v,\omega) S_{j,k}(t,x,\omega,u^{\ast },v^{\ast})dud\omega,
\end{align*}
where
\begin{align}
&S_{j,k}(t,x,\omega,u^{\ast },v^{\ast })\\
=&\int_{\tau}^{t}  K_{k}(x-v(t-t_{0})-v^{*}t_{0})I_{k}(v^{*}) K_{j}(x-v(t-t_{0})-u^{*}t_{0})I_{j}(u^{*})dt_{0}.\notag
\end{align}

We estimate by%
\begin{align}\label{equ:estimate,D+}
\left\Vert D^{+}\right\Vert _{L_{v}^{1}L_{x}^{\infty }} \leqslant& \frac{M^{2-2s}}{N_{2}^{4}}\left\Vert \sum_{k}^{J}\sum_{j}^{J}\int_{
\mathbb{S}^{2}} \int_{\R^{3}} B(u-v,\omega)  S_{j,k}(t,x,\omega,u^{\ast },v^{\ast })dud\omega \right\Vert
_{L_{v}^{1}L_{x}^{\infty }} \\
\leqslant &\frac{M^{2-2s}}{N_{2}^{4}}\sum_{k}^{J}\sum_{j}^{J}\int_{
\mathbb{S}^{2}} \int_{\R^{3}\times \R^{3}} B(u-v,\omega)\left\Vert S_{j,k}(t,x,\omega,u^{\ast
},v^{\ast })\right\Vert _{L_{x}^{\infty }} du dv d\omega\notag\\
=&\frac{M^{2-2s}}{N_{2}^{4}}\sum_{k}^{J}\sum_{j}^{J}\int_{
\mathbb{S}^{2}} \int_{\R^{3}\times \R^{3}} B(u-v,\omega) \left\Vert S_{j,k}(t,x,\omega,u^{\ast
},v^{\ast })\right\Vert _{L_{x}^{\infty }}du^{*}dv^{*} d\omega \notag\\
\lesssim &\frac{M^{2-2s}}{N_{2}^{4}}\sum_{k}^{J}\sum_{j}^{J}\int du^{\ast }\int
dv^{\ast }\int_{\mathbb{S}^{2}}d\omega  \frac{1}{|u^{*}-v^{*}|}\left\Vert S_{j,k}(t,x,\omega,u^{\ast
},v^{\ast })\right\Vert _{L_{x}^{\infty }}\notag
\end{align}
where in the second-to-last equality we used the change of variable, and in the last inequality we used that $B(u-v,\omega)=|u-v|^{-1}\textbf{b}(\cos \theta)\lesssim |u-v|^{-1}$ and $|u-v|=|u^{*}-v^{*}|$.
We note that
\begin{equation*}
v-u^{\ast }=P_{\omega }^{\bot }(v^{\ast }-u^{\ast }),\quad v-v^{\ast }=-P_{\omega
}^{\Vert }(v^{\ast }-u^{\ast }),
\end{equation*}
and hence get
\begin{align*}
&x-v(t-t_{0})-v^{\ast }t_{0} =x-vt-P_{\omega }^{\Vert }(v^{\ast }-u^{\ast
})t_{0},\\
&x-v(t-t_{0})-u^{\ast }t_{0} =x-vt+P_{\omega }^{\bot }(v^{\ast }-u^{\ast
})t_{0}.
\end{align*}
For fixed $u^{*}$ and $v^{*}$, we get
\begin{align}\label{equ:Sjk,estimate}
&S_{j,k}(t,x,\omega,u^{\ast },v^{\ast }) \\
\lesssim &\int_{T_{*}}^{0}\chi \left( MP_{e_{k}}^{\perp }(x-vt-P_{\omega
}^{\Vert }(v^{\ast }-u^{\ast })t_{0})\right)
\chi (\frac{P_{e_{k}}(x-vt-P_{\omega }^{\Vert }(v^{\ast }-u^{\ast })t_{0})}{N_{2}})
I_{k}(v^{*})\notag\\
&\chi \left( MP_{e_{j}}^{\perp }(x-vt+P_{\omega
}^{\bot }(v^{\ast }-u^{\ast })t_{0})\right) \chi (\frac{P_{e_{j}}(x-vt+P_{
\omega }^{\bot }(v^{\ast }-u^{\ast })t_{0})}{N_{2}}) I_{j}(u^{*})dt_{0}\notag\\
\leq &I_{k}(v^{*}) I_{j}(u^{*})E_{k}(t,x,v,\omega,u^{*},v^{*}),\notag
\end{align}
where
\begin{align*}
E_{k}(t,x,v,\omega,u^{*},v^{*})
=:&\int_{T_{*}}^{0}\chi \left( MP_{e_{k}}^{\perp }(x-vt-P_{\omega
}^{\Vert }(v^{\ast }-u^{\ast })t_{0})\right) dt_{0}.
\end{align*}
We split into two cases in terms of the angle $\alpha_{j,k}$ between $e_{j}$ and $e_{k}$.

\textbf{Case $I$: $\alpha_{j,k}\neq 0$.}
(In this case, we have that $\sin \alpha_{j,k}\gtrsim  \frac{1}{MN_{2}}$.)

Now, we get into the analysis of $E_{k}(t,x,v,\omega,u^{*},v^{*})$.
First of all, it gives a trial upper bound that
\begin{align}
E_{k}(t,x,v,\omega,u^{*},v^{*})\leq |T_{*}|\leq  1.
\end{align}

By the radial symmetry and monotonicity of the cutoff function $\chi$, we obtain
\begin{align}\label{equ:chi,inequality}
\int_{\R} \chi(\overrightarrow{n}t_{0}+\overrightarrow{m})dt_{0}\leq \frac{1}{|\overrightarrow{n}|}\int_{\R}\chi(t_{0})dt_{0}.
\end{align}
To see \eqref{equ:chi,inequality}, without loss of generality, we take $\overrightarrow{n}=(0,0,1)$ and $\overrightarrow{m}=(m_{1},m_{2},m_{3})$ to get
\begin{align*}
\int_{\R} \chi(\overrightarrow{n}t_{0}+\overrightarrow{m})dt_{0}=&\int_{\R} \chi\lrs{\sqrt{m_{1}^{2}+m_{2}^{2}+(t_{0}+m)^{2}}}dt_{0}\\
\leq & \int_{\R} \chi\lrs{|t_{0}+m|}dt_{0}=\int_{\R} \chi\lrs{t_{0}}dt_{0}.
\end{align*}
Thus, by \eqref{equ:chi,inequality} we arrive at
\begin{align}
E_{k}(t,x,v,\omega,u^{*},v^{*})
=&\int_{T_{*}}^{0}\chi \left( MP_{e_{k}}^{\perp }(x-vt-P_{\omega
}^{\Vert }(v^{\ast }-u^{\ast })t_{0})\right)dt_{0} \notag\\
\lesssim& \frac{\int \chi(t_{0})dt_{0}}{M|P_{e_{k}}^{\perp}P_{\omega
}^{\Vert }(v^{\ast }-u^{\ast })|}\\
\lesssim & \frac{1}{M\sin \phi_{k}} \frac{1}{|P_{\omega
}^{\Vert }(v^{\ast }-u^{\ast })|},\notag
\end{align}
where $\phi_{k}$ is the angle between $\omega$ and $e_{k}$.
Due to the $v^{*}$-support and $u^{*}$-support, we have
\begin{align}
|v^{*}-u^{*}|^{2}\sim |ae_{k}-be_{j}|^{2}=&(a-b)^{2}+2ab(1-\cos \alpha_{j,k})\\
\geq& N_{2}^{2}(1-\cos \alpha_{j,k})\gtrsim N_{2}^{2}(\sin\alpha_{j,k})^{2}.\notag
\end{align}
Let $\theta$ be the angle between $w$ and $v^{*}-u^{*}$. Then we obtain
\begin{align}
|P_{\omega
}^{\Vert }(v^{\ast }-u^{\ast })|=|v^{\ast }-u^{\ast }|\cos \theta\gtrsim N_{2}\sin \alpha_{j,k} \cos \theta.
\end{align}
Therefore, we get a useful upper bound that
\begin{align}\label{equ:upper bound,Ek}
I_{k}(v^{*}) I_{j}(u^{*})E_{k}(t,x,v,\omega,u^{*},v^{*}) \lesssim &\frac{I_{k}(v^{*}) I_{j}(u^{*})}{M N_{2} \sin \phi_{k} \cos \theta \sin \alpha_{j,k}}.
\end{align}

Now, we are able to establish the effective bound on $E_{k}(t,x,v,\omega,u^{*},v^{*})$.
Set
\begin{align*}
A=\lr{\omega\in \mathbb{S}^{2}:\phi_{k}\leq \frac{1}{MN_{2}}}\bigcup \lr{\frac{\pi}{2}-\theta\leq \frac{1}{MN_{2}}},
\end{align*}
and denote by $A^{c}$ the complementary set of $A$.
With the trivial bound that $E_{k}\leq 1$ on the set $A$, we have
\begin{align}
\int_{\mathbb{S}^{2}} \n{E_{k}(t,x,v,\omega,u^{*},v^{*})}_{L_{x}^{\infty}}d\omega
\leq &\int_{A}1d\omega +\int_{A^{c}}\n{E_{k}(t,x,v,\omega,u^{*},v^{*})}_{L_{x}^{\infty}}d\omega\notag\\
\lesssim & \frac{1}{(MN_{2})^{2}}+\int_{A^{c}}\n{E_{k}(t,x,v,\omega,u^{*},v^{*})}_{L_{x}^{\infty}}d\omega.\label{equ:estimate,Ek,Ac}
\end{align}
For the second term on the right hand side of \eqref{equ:estimate,Ek,Ac}, by the upper bound \eqref{equ:upper bound,Ek}, we get
\begin{align}
& I_{k}(v^{*})I_{j}(u^{*})\int_{A^{c}}\n{E_{k}(t,x,v,\omega,u^{*},v^{*})}_{L_{x}^{\infty}}d\omega\notag\\
\leq&\frac{I_{k}(v^{*})I_{j}(u^{*})}{MN_{2}\sin \alpha_{j,k}}\int_{A^{c}} \frac{1}{\sin \phi_{k}\cos \theta}d\omega\notag\\
\lesssim & \frac{I_{k}(v^{*})I_{j}(u^{*})}{M N_{2}\sin \alpha_{j,k}}\lrc{\int_{A^{c}}\frac{1}{(\sin \phi_{k})^{2}}d\omega+\int_{A^{c}}\frac{1}{(\cos \theta)^{2}}d\omega}.\label{equ:estimate,Ek,Ac,two terms}
\end{align}
For the last two terms on the right hand side of \eqref{equ:estimate,Ek,Ac,two terms}, by the rotational symmetry, we might as well to consider
\begin{align*}
\int_{\mathbb{S}^{2}\bigcap \lr{|\phi|\geq \frac{1}{MN_{2}}}} \frac{1}{(\sin \phi)^{2}}d\omega,
\end{align*}
where $\phi$ is the angle between the $z$-vector $(0,0,1)$ and $\om$. Using the surface integral formula,
\begin{align*}
\int_{\mathbb{S}^{2}\bigcap \lr{|\phi|\geq \frac{1}{MN_{2}}}} \frac{1}{(\sin \phi)^{2}}d\omega\lesssim \int_{|\phi|\geq \frac{1}{MN_{2}}}\frac{|\sin \phi|}{(\sin \phi)^{2}} d\phi\lesssim \int_{|\phi|\geq\frac{1}{MN_{2}}}^{\frac{\pi}{2}} \frac{1}{|\phi|}d\phi \lesssim \ln (MN_{2}).
\end{align*}
Together with \eqref{equ:estimate,Ek,Ac,two terms}, this bound yields
\begin{align}\label{equ:estimate,Ek,Ac,final}
I_{k}(v^{*})I_{j}(u^{*})\int_{A^{c}}\n{E_{k}(t,x,v,\omega,u^{*},v^{*})}_{L_{x}^{\infty}}d\omega
\leq \frac{I_{k}(v^{*})I_{j}(u^{*})\ln (MN_{2})}{MN_{2}\sin \alpha_{j,k}}.
\end{align}
Therefore, combining estimates \eqref{equ:Sjk,estimate}, \eqref{equ:estimate,Ek,Ac} and \eqref{equ:estimate,Ek,Ac,final}, we arrive at
\begin{align}
\int_{\mathbb{S}^{2}}\n{S_{j,k}(t,x,\omega,u^{\ast },v^{\ast })}_{L_{x}^{\infty}}d\omega \lesssim \frac{\ln (MN_{2})}{MN_{2}\sin \alpha_{j,k}}I_{k}(v^{*})I_{j}(u^{*}).
\end{align}
Then, going back to the estimate \eqref{equ:estimate,D+} on $D^{+}$, we have
\begin{align}\label{equ:D+,Lv1}
&\n{D^{+}}_{L_{v}^{1}L_{x}^{\infty}}\\
\lesssim & \frac{M^{2-2s}}{N_{2}^{4}}\sum_{k\neq j}\int du^{*}\int dv^{*}
\int_{\mathbb{S}^{2}} d\omega \frac{1}{|u^{*}-v^{*}|}\n{S_{j,k}(t,x,\omega,u^{\ast },v^{\ast })}_{L_{x}^{\infty}}\notag\\
\lesssim & \frac{M^{2-2s}}{N_{2}^{4}}\sum_{k\neq j} \frac{\ln (MN_{2})}{MN_{2}\sin \alpha_{j,k}}
\int du^{*}\int dv^{*}
\frac{1}{|u^{*}-v^{*}|}I_{k}(v^{*})I_{j}(u^{*}).\notag
\end{align}
In this case, since we have that $\sin \alpha_{j,k}\gtrsim  \frac{1}{MN_{2}}$, we can use estimate \eqref{equ:estimate,IjIk}, which is established in the analysis of $Q^{-}(f_{\mathrm{b}},f_{\mathrm{b}})$, to get
\begin{align}\label{equ:D+,Lv1,IjIk}
\int du^{*}\int dv^{*}
\frac{1}{|u^{*}-v^{*}|}I_{k}(v^{*})I_{j}(u^{*})
\lesssim&\frac{\ln(MN_{2})}{M^{2}} \int I_{j}(v^{*})dv^{*}=\frac{\ln(MN_{2})}{M^{2}}M^{-2}N_{2}.
\end{align}
Consequently, combining estimates \eqref{equ:D+,Lv1} and \eqref{equ:D+,Lv1,IjIk}, we arrive at
\begin{align}\label{equ:Q+,fb,fb,est2,final,case1}
N_{2}^{-1}\n{D^{+}}_{L_{v}^{1}L_{x}^{\infty}}\lesssim& N_{2}^{-1}\frac{M^{2-2s}}{N_{2}^{4}}\sum_{k\neq j}^{J} \frac{\ln (MN_{2})}{MN_{2}\sin \alpha_{j,k}} \frac{\ln (MN_{2})}{M^{4}}N_{2}\\
\lesssim&N_{2}^{-1} \frac{M^{2-2s}}{N_{2}^{4}} \frac{\ln (MN_{2})}{MN_{2}} (MN_{2})^{4} \frac{\ln (MN_{2})}{M^{4}}N_{2}\notag\\
=&N_{2}^{-1}M^{1-2s}\lrc{\ln (MN_{2})}^{2},\notag
\end{align}
where in the second-to-last inequality we have used that
\begin{align*}
\sum_{k\neq j}^{J} \frac{1}{\sin \alpha_{j,k}}=&\sum_{ j}^{J}\sum_{i=1}^{MN_{2}}\sum_{\sin \alpha_{j,k}\sim \frac{i}{MN_{2}}}
\frac{1}{\sin \alpha_{j,k}} \\
\lesssim&\sum_{ j}^{J}\sum_{i=1}^{MN_{2}} MN_{2}\sin \alpha_{j,k}
\frac{1}{\sin \alpha_{j,k}}
\lesssim (MN_{2})^{4}.
\end{align*}
This completes the estimate of the $L_{v}^{1}L_{x}^{\infty}$ norm for $D^{+}$.

\textbf{Case $II$: $\alpha_{j,k}\sim 0$.} (That is, $|j-k|\lesssim 1$.)

In this case, the summands in the double sum $\sum_{k}^{J}\sum_{j}^{J}$ are reduced to $(MN_{2})^{2}$, so we only need to use the trivial bound that
\begin{align*}
\int_{\mathbb{S}^{2}}\n{S_{j,k}(t,x,\omega,u^{\ast },v^{\ast })}_{L_{x}^{\infty}}d\omega\lesssim I_{k}(v^{*})I_{j}(u^{*}).
\end{align*}
Then, with the estimate \eqref{equ:estimate,D+} on $D^{+}$, we use Hardy-Sobolev-Littlewood inequality \eqref{equ:hls,integral} to get
\begin{align}\label{equ:Q+,fb,fb,est2,final,case2}
&\n{D^{+}}_{L_{v}^{1}L_{x}^{\infty}}\\
\lesssim&  \frac{M^{2-2s}}{N_{2}^{4}}\sum_{k}^{J}\sum_{j}^{J}\int du^{\ast }\int
dv^{\ast }\int_{\mathbb{S}^{2}}d\omega  \frac{1}{|u^{*}-v^{*}|}\left\Vert S_{j,k}(t,x,\omega,u^{\ast
},v^{\ast })\right\Vert _{L_{x}^{\infty }}\notag\\
\lesssim&  \frac{M^{2-2s}}{N_{2}^{4}}\sum_{|j-k|\lesssim 1}^{J}
\int du^{*}\int dv^{*}
\frac{1}{|u^{*}-v^{*}|}I_{k}(v^{*})I_{j}(u^{*})\notag\\
\lesssim&  \frac{M^{2-2s}}{N_{2}^{4}}\sum_{|j-k|\lesssim 1}^{J}
\n{I_{j}}_{L^{\frac{6}{5}}}\n{I_{k}}_{L^{\frac{6}{5}}}\notag\\
\lesssim&  \frac{M^{2-2s}}{N_{2}^{4}} (MN_{2})^{2} (M^{-2}N_{2})^{\frac{5}{3}}\notag\\
\lesssim& M^{\frac{2}{3}-2s} N_{2}^{-\frac{1}{3}}.\notag
\end{align}

Combining estimates \eqref{equ:Q+,fb,fb,est2,final,case1} and \eqref{equ:Q+,fb,fb,est2,final,case2} in the two cases, we finally reach
\begin{align}\label{equ:Q+,fb,fb,L1,final}
N_{2}^{-1}\n{D^{+}}_{L_{v}^{1}L_{x}^{\infty}}\lesssim M^{1-2s}N_{2}^{-1}\lrc{\ln(MN_{2})}^{2}.
\end{align}

\textbf{The $N_{2}^{\frac{1}{5}}\Vert \bullet\Vert _{L_{v}^{\frac{5}{3}}L_{x}^{\infty}}$ estimate.}

By the interpolation inequality, we have
\begin{align}\label{equ:Q+,fb,fb,L53}
N_{2}^{\frac{1}{5}}\n{D^{+}}_{L_{v}^{\frac{5}{3}}L_{x}^{\infty}}\leq \lrs{N_{2}^{-1}\n{D^{+}}_{L_{v}^{1}L_{x}^{\infty}}}^{\frac{1}{5}}
\lrs{N_{2}^{\frac{1}{2}}\n{D^{+}}_{L_{v}^{2}L_{x}^{\infty}}}^{\frac{4}{5}}.
\end{align}
For the $L_{v}^{2}L_{x}^{\infty}$ norm, by the bilinear estimate \eqref{equ:bilinear estimate,Q+} for $Q^{+}$ in Lemma \ref{lemma:bilinear estimate,Q+}, we have
\begin{align}\label{equ:Q+,fb,fb,L2}
	N_{2}^{\frac{1}{2}}\n{D^{+}}_{L_{v}^{2}L_{x}^{\infty}}\lesssim& N_{2}^{\frac{1}{2}}|T_{*}|\n{Q^{+}(f_{\mathrm{b}},f_{\mathrm{b}})}_{L_{v}^{2}L_{x}^{\infty}}\\
	\lesssim &N_{2}^{\frac{1}{2}}|T_{*}| \n{f_{\mathrm{b}}}_{L_{v}^{\frac{3}{2}}L_{x}^{\infty}}
	\n{f_{\mathrm{b}}}_{L_{v}^{2}L_{x}^{\infty}}\notag\\
	\lesssim& |T^{*}|M^{1-s}M^{1-s}\notag\\
    \lesssim&  M^{1-s}(\ln \ln \ln M),\notag
\end{align}
where we have used the bounds on $f_{b}$ established in Lemma \ref{lemma:bounds on fb} that
\begin{align*}
\n{f_{\mathrm{b}}}_{L_{v}^{\frac{3}{2}}L_{x}^{\infty}}\leq & N_{2}^{-1}\n{f_{\mathrm{b}}}_{L_{v}^{1}L_{x}^{\infty}}+N_{2}^{\frac{1}{5}}\n{f_{\mathrm{b}}}_{L_{v}^{\frac{5}{3}}L_{x}^{\infty}} \leq \n{f_{\mathrm{b}}}_{Z}\lesssim M^{1-s},\\
N_{2}^{\frac{1}{2}}\n{f_{\mathrm{b}}}_{L_{v}^{2}L_{x}^{\infty}}\lesssim &M^{1-s}.
\end{align*}
Thus, combining estimates \eqref{equ:Q+,fb,fb,L1,final}, \eqref{equ:Q+,fb,fb,L53} and \eqref{equ:Q+,fb,fb,L2}, we reach
\begin{align}
N_{2}^{\frac{1}{5}}\n{D^{+}}_{L_{v}^{\frac{5}{3}}L_{x}^{\infty}}\lesssim N_{2}^{-\frac{1}{5}}M^{1-\frac{6}{5}s}(\ln\ln\ln M)\ln (MN_{2}),
\end{align}
which is sufficient for our goal.

\end{proof}

\subsection{$Z$-norm Bounds on the Correction Term} \label{section:Bounds on the Correction Term}
Recall the equation \eqref{equ:correction term,fc} for the correction term $f_{\mathrm{c}}$ that
\begin{equation}
\left\{
\begin{aligned} &\partial_t f_{\mathrm{c}} + v\cdot \nabla_x f_{\mathrm{c}} = G,
\\ &G= \pm Q^\pm(f_{\mathrm{c}},f_{ \mathrm{a}}) \pm
Q^\pm(f_{\mathrm{a}},f_{\mathrm{c}}) \pm Q^\pm(f_{\mathrm{c}},f_{\mathrm{c}
})-F_{\text{err}}.
 \end{aligned}  \label{equ:fc}
\right.
\end{equation}
For $T_{*}=-M^{s-\frac{d-1}{2}}(\ln \ln \ln M) \leq t\leq 0$, we are looking for the correction term $f_{\mathrm{c}}(t)$ with
\begin{align}
\|f_{\mathrm{c}}(t)\|_{L_v^{2,r_{0}}H_x^{s_{0}}} \lesssim M^{-1/2}.
\end{align}
To achieve it, we apply a perturbation argument and work on the stronger $Z$-norm \eqref{equ:z-norm}.
By interpolation inequality, for $d=2,3$, we indeed have
\begin{align*}
\n{f}_{L_{v}^{2,r_{0}}H_{x}^{s_{0}}}\leq \n{f}_{L_{v}^{2,r_{0}}H_{x}^{\frac{d-1}{2}}}\leq & \n{f}_{L_{v}^{2,r_{0}}L_{x}^{2}}^{\frac{3-d}{2}}\n{\lra{\nabla_{x}}f}_{L_{v}^{2,r_{0}}L_{x}^{2}}^{\frac{d-1}{2}}\\
\leq &
M^{\frac{d-1}{2}}\n{f}_{L_{v}^{2,r_{0}}L_{x}^{2}}+ M^{\frac{d-3}{2}}\n{\lra{\nabla_{x}}f}_{L_{v}^{2,r_{0}}L_{x}^{2}}\leq \n{f}_{Z}.
\end{align*}
Certainly, there are multiple choices of $Z$-norms. As we are fully in the perturbation regime, we expect the correction term $f_{\mathrm{c}}$ to be much smoother and hence we choose the $L_{v}^{2,r_{0}}H_{x}^{1}$ norm. On the other hand, to beat the difficulties caused by singularities of soft potentials, the $L_{v}^{1}L_{x}^{\infty}$ and $L_{v}^{\frac{5}{3}}L_{x}^{\infty}$ norms\footnote{The index $\frac{5}{3}$ is just one of the multiple choices. We choose it, as it would not yield much more difficulties in the estimates on the approximation solution and error terms.} are needed as shown in the following estimate \eqref{equ:z-norm,L1,L53}.

In the section, we first prove a closed estimate for the loss and gain terms in Lemma \ref{lemma:binlinear estimate} and then use it to conclude the existence of small correction term $f_{\mathrm{c}}(t)$ in Proposition \ref{lemma:perturbation}.

\begin{lemma}[Bilinear $Z$-norm estimates for loss/gain operator $Q^\pm$]\label{lemma:binlinear estimate}
\label{L:Z_bilinear} For $f_1$, $f_2$, we have
\begin{equation*}
\|Q^\pm (f_1,f_2) \|_{Z} \lesssim \|f_1\|_{Z} \|f_2\|_{Z}.
\end{equation*}
\end{lemma}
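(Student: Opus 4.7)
The plan is to verify $\|Q^{\pm}(f_1,f_2)\|_{Z}\lesssim \|f_1\|_{Z}\|f_2\|_{Z}$ by treating separately the six summands comprising the $Z$-norm. A preliminary reduction: since $\nabla_x$ passes through the $u$-integral in $Q^{\pm}$ via the Leibniz rule, $\nabla_x Q^{\pm}(f_1,f_2)=Q^{\pm}(\nabla_x f_1,f_2)+Q^{\pm}(f_1,\nabla_x f_2)$, so it suffices to prove the three base estimates in $L^2_vL^2_x$, $L^1_vL^{\infty}_x$, and $L^{5/3}_vL^{\infty}_x$. The $M^{\pm 1}$ weights attached to the $\nabla_x$ terms in the definition of $Z$ are precisely calibrated so that applying the three base estimates to $(\nabla_x f_1,f_2)$ and $(f_1,\nabla_x f_2)$ yields the $\nabla_x$ components with the correct $M$-bookkeeping.

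For the loss term, the factored structure $Q^{-}(f_1,f_2)(x,v)=c\,f_1(x,v)F(x,v)$ with $F(x,v)=\int|u-v|^{-1}f_2(x,u)\,du$ reduces matters to an $L^{\infty}_{v,x}$ bound on $F$. Splitting the $u$-integration into near- and far-fields at $|u-v|=R$, using $|u|^{-1}\in L^{5/2}_{\mathrm{loc}}(\mathbb{R}^3)$ on the near field and an $L^1$ estimate on the far field, and optimising in $R$ yields the endpoint Riesz-type bound
\begin{equation*}
\|F\|_{L^{\infty}_{v,x}}\lesssim \|f_2\|_{L^1_vL^{\infty}_x}^{1/6}\|f_2\|_{L^{5/3}_vL^{\infty}_x}^{5/6}\lesssim \|f_2\|_{Z},
\end{equation*}
where the last inequality uses that the exponents $(1/6,5/6)$ are chosen so that the $N_2$-weights $N_2^{-1}$ and $N_2^{1/5}$ in $Z$ exactly telescope. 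An identical argument gives $\|\nabla_xF\|_{L^{\infty}_{v,x}}\lesssim M\|f_2\|_{Z}$. Pointwise multiplication by $f_1$ and its derivatives then disposes of all six $Z$-components of $Q^{-}$.

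For the gain term, which lacks a pointwise factorisation, we invoke bilinear estimates in the velocity variable. For the $L^2_vL^2_x$ component, we chain the Bobylev identity with \eqref{equ:Q+,bilinear estimate,PM,f,g} to obtain $\|Q^{+}(f_1,f_2)\|_{L^2_v}=\|\wt{Q}^{+}(\wt{f}_1,\wt{f}_2)\|_{L^2_{\xi}}\lesssim \|\wt{f}_1\|_{L^2_{\xi}}\|\wt{f}_2\|_{L^3_{\xi}}$, which by Plancherel on the first factor and Hausdorff--Young on the second yields $\|Q^{+}(f_1,f_2)\|_{L^2_v}\lesssim \|f_1\|_{L^2_v}\|f_2\|_{L^{3/2}_v}$; the same interpolation $L^{3/2}\hookleftarrow L^1\cap L^{5/3}$ used for $Q^{-}$ then lands the $f_2$ factor in $\|f_2\|_Z$, and $L^2_x$ pairing via H\"older completes the component. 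For the $L^1_vL^{\infty}_x$ component, the classical Hardy--Littlewood--Sobolev inequality with $\lambda=1,d=3$ (giving $1/p+1/q=5/3$), applied after the pre/post-collision change of variables $(u,v)\leftrightarrow(u^{\ast},v^{\ast})$ which leaves $|u-v|$ invariant, gives $\|Q^{+}(f_1,f_2)\|_{L^1_v}\lesssim \|f_1\|_{L^1_v}\|f_2\|_{L^{3/2}_v}$, and the same interpolation finishes the job. The $L^{5/3}_vL^{\infty}_x$ component follows analogously from an Alonso-Carneiro-Gamba type bilinear bound $\|Q^{+}(f_1,f_2)\|_{L^{5/3}_v}\lesssim \|f_1\|_{L^{5/3}_v}\|f_2\|_{L^{3/2}_v}$.

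The main obstacle is the exponent matching for $Q^{+}$: the bilinear estimates naturally produce an $L^{3/2}_v$ norm (equivalently $L^3_{\xi}$ on the Fourier side), which is not controlled by any single summand of $Z$ but only by the interpolation $L^1\cap L^{5/3}\hookrightarrow L^{3/2}$, whose exponent pair $(1/6,5/6)$ is the very arithmetic reason the $N_2$-weights $(N_2^{-1},N_2^{1/5})$ in $Z$ are engineered as they are. Once this interpolation is in place, all six $Z$-components line up with the correct $M,N_2$ powers, yielding the closed bilinear estimate.
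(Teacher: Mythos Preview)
Your overall architecture matches the paper's: reduce to three base estimates via Leibniz in $x$, handle $Q^-$ by pulling out $\|F\|_{L^\infty_{v,x}}$ and bounding it with the endpoint Riesz estimate \eqref{equ:endpoint estimate,hls}, and handle $Q^+$ with bilinear convolution-type bounds. Two points need repair.

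First, your $L^1_v$ bound for $Q^+$ is misjustified. You invoke ``classical Hardy--Littlewood--Sobolev with $1/p+1/q=5/3$'' to get $\|Q^+(f_1,f_2)\|_{L^1_v}\lesssim\|f_1\|_{L^1_v}\|f_2\|_{L^{3/2}_v}$, but HLS requires $p,q>1$; the $p=1$ endpoint fails because $|\cdot|^{-1}\ast g$ does not map $L^{3/2}(\R^3)\to L^\infty$. The correct route is exactly the one you already used for $Q^-$: after the change of variables $(u,v)\mapsto(u^*,v^*)$, pull one factor out in $L^1$ and bound the remaining Riesz potential by Lemma~\ref{lemma:endpoint estimate,hls}, obtaining $\|Q^+(f_1,f_2)\|_{L^1_v}\lesssim\|f_1\|_{L^1_v}\|f_2\|_{L^1_v}^{1/6}\|f_2\|_{L^{5/3}_v}^{5/6}$ directly (this is Lemma~\ref{lemma:endpoint estimate,Q+} in the paper). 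No intermediate $L^{3/2}$ step is needed or available.

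Second, the $L^2_v$ component of $Z$ is weighted: it is $L^{2,r_0}_v$ with $r_0=\max\{0,s_0+\gamma\}$. For $Q^-$ this is harmless since the weight sits on $f_1$ and your argument goes through verbatim. For $Q^+$, however, the outer weight $\langle v\rangle^{r_0}$ must be moved onto $v^*$ or $u^*$, which the paper does via the energy conservation $|v|^2\le|v^*|^2+|u^*|^2$ (so $\langle v\rangle^{r_0}\lesssim\langle v^*\rangle^{r_0}+\langle u^*\rangle^{r_0}$) and then applies the physical-side bilinear estimate \eqref{equ:bilinear estimate,Q+}. Your Fourier-side route through \eqref{equ:Q+,bilinear estimate,PM,f,g} and Hausdorff--Young is fine when $r_0=0$ (e.g.\ the endpoint $d=3$, $\gamma=-1$), but for general $\gamma\in(\tfrac{1-d}{2},0]$ the weight can be positive and your argument does not accommodate it; switching to the physical-side estimate with the energy-conservation trick closes this gap.
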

\begin{proof}
We only need to prove that
\begin{align}\label{equ:z-norm,L1,L53}
\n{Q^{\pm}(f_{1},f_{2})}_{Z}\lesssim \n{f_{1}}_{Z}\lrs{\n{f_{2}}_{L_{v}^{1}L_{x}^{\infty}}^{1+\frac{5\ga}{2d}}\n{f_{2}}_{L_{v}^{\frac{5}{3}}L_{x}^{\infty}}^{\frac{-5\ga}{2d}}
+M^{-1}\n{\nabla_{x}f_{2}}_{L_{v}^{1}L_{x}^{\infty}}^{1+\frac{5\ga}{2d}}\n{\nabla_{x}f_{2}}_{L_{v}^{\frac{5}{3}}L_{x}^{\infty}}^{\frac{-5\ga}{2d}}}.
\end{align}
since we have that
\begin{align*}
\n{f}_{L_{v}^{1}L_{x}^{\infty}}^{1+\frac{5\ga}{2d}}\n{f}_{L_{v}^{\frac{5}{3}}L_{x}^{\infty}}^{\frac{-5\ga}{2d}}\leq& N_{2}^{-1}\n{f}_{L_{v}^{1}L_{x}^{\infty}}
+N_{2}^{\frac{2d}{5}+\ga}\n{f}_{L_{v}^{\frac{5}{3}}L_{x}^{\infty}}\leq \n{f}_{Z},\\
M^{-1}\n{\nabla_{x} f}_{L_{v}^{1}L_{x}^{\infty}}^{1+\frac{5\ga}{2d}}
\n{\nabla_{x} f}_{L_{v}^{\frac{5}{3}}L_{x}^{\infty}}^{\frac{-5\ga}{2d}}\leq& M^{-1}N_{2}^{-1}\n{\nabla_{x}f}_{L_{v}^{1}L_{x}^{\infty}}+
M^{-1}N_{2}^{\frac{2d}{5}+\ga}\n{\nabla_{x}f}_{L_{v}^{\frac{5}{3}}L_{x}^{\infty}}\leq \n{f}_{Z}.
\end{align*}

\textbf{The $M^{\frac{d-3}{2}}\Vert \nabla_{x}\bullet \Vert_{L_{v}^{2,r_{0}}L_{x}^{2}}$ and $M^{\frac{d-1}{2}}\Vert \bullet \Vert_{L_{v}^{2,r_{0}}L_{x}^{2}}$ estimates for $Q^{\pm}(f_{1},f_{2})$.}

It suffices to deal with $M^{\frac{d-3}{2}}\Vert \nabla_{x}\bullet \Vert_{L_{v}^{2,r_{0}}L_{x}^{2}}$ norm as the $M^{\frac{d-1}{2}}\Vert \bullet \Vert_{L_{v}^{2,r_{0}}L_{x}^{2}}$ norm can be estimated in a similar way.
For the estimate on $Q^{-}$, we use Leibniz rule and H\"{o}lder inequality to get
\begin{align*}
&M^{\frac{d-3}{2}}\n{\nabla_{x}Q^{-}(f_1,f_2)}_{L_{v}^{2,r_{0}}L_{x}^{2}}\\
\leq& M^{\frac{d-3}{2}}\n{Q^{-}(\nabla_{x}f_1,f_2)}_{L_{v}^{2,r_{0}}L_{x}^{2}}+M^{\frac{d-3}{2}}\n{Q^{-}(f_1,\nabla_{x}f_2)}_{L_{v}^{2,r_{0}}L_{x}^{2}}\\
\lesssim &
M^{\frac{d-3}{2}}\bbn{(\nabla_{x}f_{1})(x,v)\int \frac{f_{2}(x,u)}{|u-v|^{-\ga}} du}_{L_{v}^{2,r_{0}}L_{x}^{2}}+
M^{\frac{d-3}{2}}\bbn{f_{1}(x,v)\int \frac{\nabla_{x}f_{2}(x,u)}{|u-v|^{-\ga}} du}_{L_{v}^{2,r_{0}}L_{x}^{2}}\\
\lesssim& M^{\frac{d-3}{2}}\n{\nabla_{x}f_{1}}_{L_{v}^{2,r_{0}}L_{x}^{2}}\bbn{\int \frac{f_{2}(x,u)}{|u-v|^{-\ga}}du}_{L_{v,x}^{\infty}}+ M^{\frac{d-3}{2}}\n{f_{1}}_{L_{v}^{2,r_{0}}L_{x}^{2}}
\bbn{\int \frac{\nabla_{x}f_{2}(x,u)}{|u-v|^{-\ga}}du}_{L_{v,x}^{\infty}}.
\end{align*}
Then by $L^{\infty}$ estimate \eqref{equ:endpoint estimate,hls} in Lemma \ref{lemma:endpoint estimate,hls}, we obtain
\begin{align*}
&M^{\frac{d-3}{2}}\n{\nabla_{x}Q^{-}(f_1,f_2)}_{L_{v}^{2,r_{0}}L_{x}^{2}}\\
\lesssim&M^{\frac{d-3}{2}} \n{\nabla_{x}f_{1}}_{L_{v}^{2,r_{0}}L_{x}^{2}}\n{f_{2}}_{L_{v}^{1}L_{x}^{\infty}}^{1+\frac{5\ga}{2d}}
\n{f_{2}}_{L_{v}^{\frac{5}{3}}L_{x}^{\infty}}^{\frac{-5\ga}{2d}}+
M^{\frac{d-1}{2}}\n{f_{1}}_{L_{v}^{2,r_{0}}L_{x}^{2}}M^{-1}\n{\nabla_{x} f_{2}}_{L_{v}^{1}L_{x}^{\infty}}^{1+\frac{5\ga}{2d}}
\n{\nabla_{x} f_{2}}_{L_{v}^{\frac{5}{3}}L_{x}^{\infty}}^{\frac{-5\ga}{2d}}\\
\lesssim& \n{f_{1}}_{Z}\n{f_{2}}_{Z}.
\end{align*}

For the estimate on $Q^{+}$, from the conservation of energy that $|v|^{2}+|u|^{2}=|v^{*}|^{2}+|u^{*}|^{2}$, we use Leibniz rule to get
\begin{align*}
|\lra{v}^{r_{0}}\nabla_{x}Q^{+}(f_{1},f_{2})|\lesssim & Q^{+}(\lra{v}^{r_{0}}|\nabla_{x}f_{1}|,|f_{2}|)+
Q^{+}(\lra{v}^{r_{0}}|f_{1}|,|\nabla_{x}f_{2}|)\\
&+Q^{+}(|\nabla_{x}f_{1}|,\lra{v}^{r_{0}}|f_{2}|)+Q^{+}(|f_{1}|,\lra{v}^{r_{0}}|\nabla_{x}f_{2}|).
\end{align*}
Then by bilinear estimate \eqref{equ:bilinear estimate,Q+} on $Q^{+}$, we have
\begin{align*}
&M^{\frac{d-3}{2}}\n{\nabla_{x}Q^{+}(f_{1},f_{2})}_{L_{v}^{2,r_{0}}L_{x}^{2}}\\
\lesssim& M^{\frac{d-3}{2}}\n{\lra{v}^{r_{0}}\nabla_{x}f_{1}}_{L_{v}^{2}L_{x}^{2}}
\n{f_{2}}_{L_{v}^{\frac{d}{d+\ga}}L_{x}^{\infty}}+
M^{\frac{d-1}{2}}\n{\lra{v}^{r_{0}}f_{1}}_{L_{v}^{2}L_{x}^{2}}M^{-1}\n{\nabla_{x} f_{2}}_{L_{v}^{\frac{d}{d+\ga}}L_{x}^{\infty}}\\
&+M^{-1}\n{\nabla_{x}f_{1}}_{L_{v}^{\frac{d}{d+\ga}}L_{x}^{\infty}}
M^{\frac{d-1}{2}}\n{\lra{v}^{r_{0}}f_{2}}_{L_{v}^{2}L_{x}^{2}}
+\n{f_{1}}_{L_{v}^{\frac{d}{d+\ga}}L_{x}^{\infty}}
M^{\frac{d-3}{2}}\n{\lra{v}^{r_{0}}\nabla_{x} f_{2}}_{L_{v}^{2}L_{x}^{2}}.
\end{align*}
By the interpolation inequality that
\begin{align*}
\n{f}_{L_{v}^{\frac{d}{d+\ga}}L_{x}^{\infty}}\leq \n{f}_{L_{v}^{1}L_{x}^{\infty}}^{1+\frac{5\ga}{2d}}\n{f}_{L_{v}^{\frac{5}{3}}L_{x}^{\infty}}^{\frac{-5\ga}{2d}},
\end{align*}
we arrive at
\begin{align*}
M^{\frac{d-3}{2}}\n{\nabla_{x}Q^{+}(f_{1},f_{2})}_{L_{v}^{2,r_{0}}L_{x}^{2}}\lesssim \n{f_{1}}_{Z}\n{f_{2}}_{Z}.
\end{align*}

\textbf{The $N_{2}^{\ga}\Vert \bullet\Vert _{L_{v}^{1}L_{x}^{\infty}}$ and $N_{2}^{\frac{2d}{5}+\ga}\Vert \bullet\Vert _{L_{v}^{\frac{5}{3}}L_{x}^{\infty}}$ estimates for $Q^{\pm}(f_{1},f_{2})$.}

For the estimate on $Q^{-}$ ,
we use H\"{o}lder inequality and the $L^{\infty}$ estimate \eqref{equ:endpoint estimate,hls} to get
\begin{align}\label{equ:Q-,Lv1}
N_{2}^{\ga}\n{Q^{-}(f_{1},f_{2})}_{L_{v}^{1}L_{x}^{\infty}}\lesssim& N_{2}^{\ga}\n{f_{1}}_{L_{v}^{1}L_{x}^{\infty}}\bbn{\int \frac{f_{2}(x,u)}{|u-v|}du}_{L_{v}^{\infty}L_{x}^{\infty}}\\
\lesssim&N_{2}^{\ga}\n{f_{1}}_{L_{v}^{1}L_{x}^{\infty}}\n{f_{2}}_{L_{v}^{1}L_{x}^{\infty}}^{1+\frac{5\ga}{2d}}
\n{f_{2}}_{L_{v}^{\frac{5}{3}}L_{x}^{\infty}}^{\frac{-5\ga}{2d}}\notag\\
\lesssim& \n{f_{1}}_{Z}\n{f_{2}}_{Z}.\notag
\end{align}
In the same way, we also have
\begin{align*}
N_{2}^{\frac{2d}{5}+\ga}\n{Q^{-}(f_{1},f_{2})}_{L_{v}^{\frac{5}{3}}L_{x}^{\infty}}
\lesssim& N_{2}^{\frac{2d}{5}+\ga}\n{f_{1}}_{L_{v}^{\frac{5}{3}}L_{x}^{\infty}}
\n{f_{2}}_{L_{v}^{1}L_{x}^{\infty}}^{1+\frac{5\ga}{2d}}\n{f_{2}}_{L_{v}^{\frac{5}{3}}L_{x}^{\infty}}^{\frac{-5\ga}{2d}}\\
\lesssim& \n{f_{1}}_{Z}\n{f_{2}}_{Z}.
\end{align*}

For the estimate on $Q^{+}$, by the bilinear estimate \eqref{equ:endpoint estimate,Q+,f} for $Q^{+}$ in Lemma \ref{lemma:endpoint estimate,Q+}, we have
\begin{align}\label{equ:Q+,Lv1}
N_{2}^{\ga}\n{Q^{+}(f_{1},f_{2})}_{L_{v}^{1}L_{x}^{\infty}}\lesssim N_{2}^{\ga}\n{f_{1}}_{L_{v}^{1}L_{x}^{\infty}}\n{f_{2}}_{L_{v}^{1}L_{x}^{\infty}}^{1+\frac{5\ga}{2d}}
\n{f_{2}}_{L_{v}^{\frac{5}{3}}L_{x}^{\infty}}^{\frac{-5\ga}{2d}}
\lesssim \n{f_{1}}_{Z}\n{f_{2}}_{Z}.
\end{align}
Similarly, by the bilinear estimate \eqref{equ:bilinear estimate,Q+} in Lemma \ref{lemma:bilinear estimate,Q+}, we obtain
\begin{align*}
N_{2}^{\frac{2d}{5}+\ga}\n{Q^{+}(f_{1},f_{2})}_{L_{v}^{\frac{5}{3}}L_{x}^{\infty}}\lesssim N_{2}^{\frac{2d}{5}+\ga}\n{f_{1}}_{L_{v}^{\frac{5}{3}}L_{x}^{\infty}}\n{f_{2}}_{L_{v}^{1}L_{x}^{\infty}}^{1+\frac{5\ga}{2d}}
\n{f_{2}}_{L_{v}^{\frac{5}{3}}L_{x}^{\infty}}^{\frac{-5\ga}{2d}}
\lesssim \n{f_{1}}_{Z}\n{f_{2}}_{Z}.
\end{align*}

\textbf{The $M^{-1}N_{2}^{\ga}\Vert \nabla_{x}\bullet\Vert _{L_{v}^{1}L_{x}^{\infty}}$ and $M^{-1}N_{2}^{\frac{2d}{5}+\ga}\Vert \nabla_{x}\bullet\Vert _{L_{v}^{\frac{5}{3}}L_{x}^{\infty}}$ estimates for $Q^{\pm}(f_{1},f_{2})$.}

For the estimate on $Q^{-}$,
in a similar way to \eqref{equ:Q-,Lv1}, we use the Leibniz rule to get
\begin{align*}
&M^{-1}N_{2}^{\ga}\Vert \nabla_{x}Q^{-}(f_{1},f_{2})\Vert _{L_{v}^{1}L_{x}^{\infty}}\\
\leq& M^{-1}N_{2}^{\ga}\lrs{\n{Q^{-}(\nabla_{x}f_{1},f_{2})}_{L_{v}^{1}L_{x}^{\infty}}
+\n{Q^{-}(f_{1},\nabla_{x} f_{2})}_{L_{v}^{1}L_{x}^{\infty}}}\\
\lesssim &M^{-1}N_{2}^{\ga}\n{\nabla_{x}f_{1}}_{L_{v}^{1}L_{x}^{\infty}}
\n{f_{2}}_{L_{v}^{1}L_{x}^{\infty}}^{1+\frac{5\ga}{2d}}\n{f_{2}}_{L_{v}^{\frac{5}{3}}L_{x}^{\infty}}^{\frac{-5\ga}{2d}}\\
&+ N_{2}^{\ga}\n{f_{1}}_{L_{v}^{1}L_{x}^{\infty}} M^{-1}\n{\nabla_{x}f_{2}}_{L_{v}^{1}L_{x}^{\infty}}^{1+\frac{5\ga}{2d}}\n{\nabla_{x}f_{2}}_{L_{v}^{\frac{5}{3}}L_{x}^{\infty}}^{\frac{-5\ga}{2d}}\\
\lesssim& \n{f_{1}}_{Z}\n{f_{2}}_{Z}.
\end{align*}
The same also holds for the $M^{-1}N_{2}^{\frac{2d}{5}+\ga}\Vert \nabla_{x} Q^{-}(f_{1},f_{2})\Vert _{L_{v}^{\frac{5}{3}}L_{x}^{\infty}}$ norm.

For the estimate on $Q^{+}$, in a similar way to \eqref{equ:Q+,Lv1}, we also have
\begin{align*}
&M^{-1}N_{2}^{\ga}\n{\nabla_{x}Q^{+}(f_{1},f_{2})}_{L_{v}^{1}L_{x}^{\infty}}\\
\leq&
M^{-1}N_{2}^{\ga}\lrs{\n{Q^{+}(\nabla_{x}f_{1},f_{2})}_{L_{v}^{1}L_{x}^{\infty}}
+\n{Q^{+}(f_{1},\nabla_{x}f_{2})}_{L_{v}^{1}L_{x}^{\infty}}}\\
\lesssim& M^{-1}N_{2}^{\ga}\n{\nabla_{x}f_{1}}_{L_{v}^{1}L_{x}^{\infty}}
\n{f_{2}}_{L_{v}^{1}L_{x}^{\infty}}^{1+\frac{5\ga}{2d}}
\n{f_{2}}_{L_{v}^{\frac{5}{3}}L_{x}^{\infty}}^{\frac{-5\ga}{2d}}\\
&+ N_{2}^{\ga}\n{f_{1}}_{L_{v}^{1}L_{x}^{\infty}}
M^{-1}\n{\nabla_{x}f_{2}}_{L_{v}^{1}L_{x}^{\infty}}^{1+\frac{5\ga}{2d}}
\n{\nabla_{x}f_{2}}_{L_{v}^{\frac{5}{3}}L_{x}^{\infty}}^{\frac{-5\ga}{2d}}\\
\lesssim& \n{f_{1}}_{Z}\n{f_{2}}_{Z}.
\end{align*}
The estimate for the $M^{-1}N_{2}^{\frac{2d}{5}+\ga}\Vert \nabla_{x} Q^{+}(f_{1},f_{2})\Vert _{L_{v}^{\frac{5}{3}}L_{x}^{\infty}}$
norm follows the same way by using bilinear estimate \eqref{equ:bilinear estimate,Q+} in Lemma \ref{lemma:bilinear estimate,Q+}.

\end{proof}

Now, we take a perturbation argument to generate the correction term $f_{\mathrm{c}}(t)$ using the $Z$-norm bounds on $f_{\mathrm{a}}$ in Proposition \ref{lemma:z-norm bounds on fa} and the $Z$-norm bounds on $F_{\mathrm{err}}$ in Proposition \ref{lemma:bounds on ferr}.

\begin{proposition}\label{lemma:perturbation} Suppose that $f_{\mathrm{c}}$
	solves \eqref{equ:fc} with $f_{\mathrm{c}}(0)=0$. Then for all $t$ such
	that
	\begin{equation*}
		T_{\ast }=- M^{s-\frac{d-1}{2}}(\ln \ln \ln M)\leq t\leq 0,
	\end{equation*}%
	we have the bound
	\begin{equation}
		\Vert f_{\mathrm{c}}(t)\Vert_{Z}\lesssim M^{-1/2}.  \label{E:fc_bound2}
	\end{equation}
\end{proposition}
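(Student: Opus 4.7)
The plan is to pass to the mild formulation of \eqref{equ:fc} and close an a priori $Z$-norm bound via a Gronwall-type argument, with existence obtained by Picard iteration in a suitably weighted norm. Using $f_{\mathrm{c}}(0)=0$ and integrating backwards, the integral equation reads
$$f_{\mathrm{c}}(t) = -\int_t^0 e^{-(t-t_0)v\cdot\nabla_x}\Big[Q(f_{\mathrm{c}},f_{\mathrm{a}})+Q(f_{\mathrm{a}},f_{\mathrm{c}})+Q(f_{\mathrm{c}},f_{\mathrm{c}})-F_{\mathrm{err}}\Big](t_0)\, dt_0, \qquad t\in[T_*,0].$$

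The central observation is that the free-streaming operator $e^{-\tau v\cdot\nabla_x}$ is merely the $x$-shift $x\mapsto x-\tau v$, which preserves every $L^p_v L^q_x$ component of \eqref{equ:z-norm} and commutes with $\nabla_x$; hence $\|e^{-\tau v\cdot\nabla_x}f\|_Z = \|f\|_Z$ uniformly in $\tau$. Combined with the closed bilinear estimate $\|Q^\pm(f_1,f_2)\|_Z\lesssim \|f_1\|_Z\|f_2\|_Z$ (Lemma \ref{lemma:binlinear estimate}) and the error bound $\|\int_\tau^t e^{-(t-s)v\cdot\nabla_x}F_{\mathrm{err}}(s)\,ds\|_Z\lesssim M^{-1}$ (Proposition \ref{lemma:bounds on ferr}), this yields the a priori integral inequality
$$\|f_{\mathrm{c}}(t)\|_Z \le CM^{-1} + C\int_t^0 \|f_{\mathrm{c}}(t_0)\|_Z\big(\|f_{\mathrm{a}}(t_0)\|_Z+\|f_{\mathrm{c}}(t_0)\|_Z\big)\,dt_0.$$

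The main obstacle is that $|T_*|\,\|f_{\mathrm{a}}\|_{L^\infty Z}\sim (\ln\ln\ln M)(\ln\ln M)^2$ is \emph{not} small, so a naive contraction in $L^\infty_t Z$ fails. To produce $f_{\mathrm{c}}$, I would introduce the weighted norm
$$\|f\|_Y := \sup_{t\in[T_*,0]} e^{-\lambda(t)}\|f(t)\|_Z, \qquad \lambda(t) := 2C\int_t^0 \|f_{\mathrm{a}}(s)\|_Z\,ds,$$
and verify that the linear map $g\mapsto -\int_t^0 e^{-(t-s)v\cdot\nabla_x}[Q(g,f_{\mathrm{a}})+Q(f_{\mathrm{a}},g)]\,ds$ is a strict contraction on $Y$: the identity $\frac{d}{ds}e^{\lambda(s)}=-2C\|f_{\mathrm{a}}(s)\|_Z e^{\lambda(s)}$ gives $e^{-\lambda(t)}\int_t^0 C\|f_{\mathrm{a}}(s)\|_Z e^{\lambda(s)}\,ds\le \tfrac12$, converting the non-small linear term into an absorbing factor. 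Choosing the Picard ball of radius of order $M^{-1}$ controls the quadratic term by Lemma \ref{lemma:binlinear estimate}, and Proposition \ref{lemma:bounds on ferr} handles the inhomogeneity, yielding a unique $f_{\mathrm{c}}\in Y$ with $\|f_{\mathrm{c}}\|_Y\lesssim M^{-1}$.

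To conclude \eqref{E:fc_bound2}, I would bound $e^{\lambda(t)}$. Using Lemma \ref{lemma:bounds on fb} together with the sharper pointwise estimate $\|f_{\mathrm{r}}(s)\|_Z\lesssim M^{(d-1)/2-s}\exp[|s|M^{(d-1)/2-s}]\langle |s|M^{(d-1)/2-s}\rangle$ from Lemma \ref{lemma:z-norm bounds on fr}, a direct integration (recalling $|T_*|M^{(d-1)/2-s}=\ln\ln\ln M$) gives $\lambda(T_*)\lesssim \ln\ln M$, so that $e^{\lambda(t)}\le (\ln M)^C$. Therefore, for $t\in[T_*,0]$,
$$\|f_{\mathrm{c}}(t)\|_Z \le e^{\lambda(t)}\|f_{\mathrm{c}}\|_Y \lesssim (\ln M)^C \cdot M^{-1} \ll M^{-1/2},$$
for $M$ large, establishing \eqref{E:fc_bound2}. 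The hard part is precisely the failure of straightforward contraction caused by $|T_*|\,\|f_{\mathrm{a}}\|_{L^\infty Z}$ being a slowly-growing quantity rather than a small one, which the weighted-norm device circumvents by transferring the growth into a factor of $e^{\lambda(t)}\le (\ln M)^C$ that is still dwarfed by the base bound $M^{-1}$.
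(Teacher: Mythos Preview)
Your argument is correct in spirit and reaches the right conclusion, but it follows a genuinely different route from the paper.

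The paper handles the non-small linear coefficient by a \emph{discrete time-stepping} argument: it partitions $[T_*,0]$ into $n\sim\sqrt{\ln M}\,(\ln\ln\ln M)$ subintervals of length $M^{s-\frac{d-1}{2}}/\sqrt{\ln M}$, so that on each subinterval the linear contribution $|I_j|\,\|f_{\mathrm{a}}\|_{L^\infty Z}\lesssim (\ln\ln M)^2/\sqrt{\ln M}$ is absorbable. Iterating the resulting recursion $\|f_{\mathrm{c}}\|_{L^\infty_{I_j}Z}\le 2\|f_{\mathrm{c}}(T_j)\|_Z+2CM^{-1}$ over $n$ steps gives growth $2^n=\exp\big(O(\sqrt{\ln M}\,\ln\ln\ln M)\big)$, still negligible against $M^{1/2}$. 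Your approach replaces this with a \emph{continuous} Gronwall weight $e^{\lambda(t)}$, which is conceptually cleaner and in fact yields the slightly smaller growth $\exp\big(O((\ln\ln M)(\ln\ln\ln M))\big)$. The paper's discrete iteration has the advantage of staying elementary and making the quadratic-term bootstrap transparent; your weighted-norm Picard scheme packages existence and the a~priori bound together.

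One small correction: your claim $\lambda(T_*)\lesssim \ln\ln M$ understates the integral. With $\mu:=M^{\frac{d-1}{2}-s}$ and $\mu|T_*|=\ln\ln\ln M$, integrating the pointwise bound $\|f_{\mathrm{r}}(s)\|_Z\lesssim \mu e^{\mu|s|}\langle\mu|s|\rangle$ gives $\int_{T_*}^0\|f_{\mathrm{r}}\|_Z\,ds\lesssim (\ln\ln\ln M)(\ln\ln M)$, so $e^{\lambda(T_*)}\lesssim (\ln M)^{C\ln\ln\ln M}$ rather than $(\ln M)^C$. This does not affect the conclusion, since $(\ln\ln\ln M)(\ln\ln M)=o(\ln M)$ still ensures $e^{\lambda(T_*)}M^{-1}\ll M^{-1/2}$.
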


\begin{proof}
	Let the time interval $T_{\ast }\leq t\leq 0$ be partitioned as
	\begin{equation*}
		T_{\ast }=T_{n}<T_{n-1}<T_{n-2}<\cdots <T_{2}<T_{1}<T_{0}=0
	\end{equation*}%
	where
$$T_{j}= \frac{- jM^{s-\frac{d-1}{2}}}{\sqrt{\ln M}},\quad n= \sqrt{\ln M} (\ln\ln \ln M).$$ Thus, the length
	of each time interval $I_{j}=[T_{j+1},T_{j}]$ is
	\begin{equation*}
		|I_{j}|=  \frac{M^{s-\frac{d-1}{2}}}{\sqrt{\ln M}}.
	\end{equation*}
	For $t\in I_{j}=[T_{j+1},T_{j}]$, we rewrite the equation \eqref{equ:fc} in Duhamel form
\begin{align*}
f_{\mathrm{c}}(T_{j}+t)=e^{-(t-T_{j})v\cdot \nabla_{x}}f_{\mathrm{c}}(T_{j})+\int_{T_{j}}^{t}e^{-(t-t_{0})v\cdot \nabla_{x}}G(t_{0})dt_{0}
\end{align*}
with $f_{\mathrm{c}}(T_{0})=0$.
	Applying the $Z$-norm,
	\begin{align*}
		\Vert f_{\mathrm{c}}\Vert _{L_{I_{j}}^{\infty }Z} \leq& \n{f_{\mathrm{c}}(T_{j})}_{Z}
+\bbn{\int_{T_{j}}^{t}e^{-(t-t_{0})v\cdot \nabla_{x}}G(t_{0})dt_{0}}_{L_{I_{j}}^{\infty}Z} \\
		 \leq& \n{f_{\mathrm{c}}(T_{j})}_{Z}+ |I_{j}|\Vert Q^{\pm }(f_{\mathrm{c}},f_{\mathrm{a}})\Vert
		_{L_{I_{j}}^{\infty }Z}+|I_{j}|\Vert Q^{\pm }(f_{\mathrm{a}},f_{\mathrm{c}%
		})\Vert _{L_{I_{j}}^{\infty }Z}+|I_{j}|\Vert Q^{\pm }(f_{\mathrm{c}},f_{\text{c%
		}})\Vert _{L_{I_{j}}^{\infty }Z} \\
		& +\bbn{\int_{T_{j}}^{t}e^{-(t-t_{0})v\cdot \nabla_{x}}F_{\mathrm{err}}(t_{0})dt_{0}}_{L_{I_{j}}^{\infty}Z}.
	\end{align*}%
	For these terms on the second line, we apply the bilinear estimate in Lemma \ref{lemma:binlinear estimate}, and then the estimate \eqref{equ:z-norm estimate for fa} on $\n{f_{\mathrm{a}}}_{L_{I_{j}}^{\infty }Z}$ from Lemma \ref{lemma:z-norm bounds on fa}. For the $F_{\mathrm{err}}$ term on the
	last line, we use the estimate \eqref{equ:Ferr_bound} in Proposition \ref{lemma:bounds on ferr}. Then we have
	\begin{equation*}
		\Vert f_{\mathrm{c}}\Vert _{L_{I_{j}}^{\infty }Z}\leq \Vert f_{\mathrm{c}%
		}(T_{j})\Vert _{Z}+ \frac{C(\ln \ln M)^{2}}{\sqrt{\ln M}}\Vert f_{\mathrm{c}}\Vert _{L_{I_{j}}^{\infty
			}Z}+ \frac{CM^{s-\frac{d-1}{2}}}{\sqrt{\ln M}}\Vert f_{\mathrm{c}}\Vert _{L_{I_{j}}^{\infty
			}Z}^{2}+CM^{-1},
	\end{equation*}
where $C$ is some absolute constant. Absorbing the $\Vert f_{\mathrm{c}}\Vert
_{L_{I_{j}}^{\infty }Z}$ term on the right gives
\begin{equation*}
\Vert f_{\mathrm{c}}\Vert _{L_{I_{j}}^{\infty }Z}\leq 2\Vert f_{\mathrm{c}
}(T_{j})\Vert _{Z}+2CM^{-1}.
\end{equation*}
Applying this successively for $j=0,1,\ldots $, we obtain
\begin{equation*}
\Vert f_{\mathrm{c}}\Vert _{L_{I_{j}}^{\infty }Z}\leq (2^{j+1}-1)2CM^{-1}.
\end{equation*}%
With $j=n=\sqrt{\ln M}(\ln\ln \ln M)$, we arrive at
\begin{equation*}
\Vert f_{\mathrm{c}}(T_{\ast })\Vert _{Z}\leq \frac{Ce^{\sqrt{\ln M}\ln \ln M}}{M} = \frac{Ce^{\sqrt{\ln M}\ln \ln M}}{e^{ \ln M}}\leq
M^{-1 /2}\ll 1.
\end{equation*}

\end{proof}

\subsection{Proof of Illposedness} \label{section:Proof of Illposedness}
We get into the proof the ill-posedness.

\begin{proof}[\textbf{Proof of Ill-posedness in Theorem $\ref{thm:main theorem}$}]
Let
$$f_{\mathrm{ex}}(t)=f_{\mathrm{r}}(t)+f_{\mathrm{b}}(t)+f_{\mathrm{c}}(t),$$
 with $f_{\mathrm{c}}(t)$ given in Proposition \ref{lemma:perturbation}. By the upper and lower bounds in Lemma \ref{lemma:bounds on fr} that
$$\n{f_{\mathrm{r}}(0)}_{L_{v}^{2,r_{0}}H_{x}^{s_{0}}}\lesssim \frac{1}{\ln\ln M},\quad \n{f_{\mathrm{r}}(T_{*})}_{L_{v}^{2,r_{0}}H_{x}^{s_{0}}}\gtrsim 1,\quad $$ we can take $t_{0}\in [T_{*},0]$ such that $\n{f_{\mathrm{r}}(t_{0})}_{L_{v}^{2,r_{0}}H_{x}^{s_{0}}}=1$.
Note that
\begin{align*}
\n{f_{\mathrm{r}}}_{Z}\lesssim M^{\frac{d-1}{2}-s} (\ln \ln M)^{2},\quad \n{v\cdot \nabla_{x}f_{\mathrm{r}}}_{Z}\ll M^{-1},\quad \n{Q^{\pm}(f_{\mathrm{r}},f_{\mathrm{r}})}_{Z}\ll M^{-1},
\end{align*}
which are established in Lemma \ref{lemma:z-norm bounds on fr} and Section \ref{section:Bounds on the Error Terms}.
Therefore, by the same perturbation argument in Lemma \ref{lemma:perturbation}, we generate an exact solution $g_{\mathrm{ex}}(t)$ to Boltzmann equation
\begin{equation*}
g_{\mathrm{ex}}(t)=f_{\mathrm{r}}(t_{0})+g_{\mathrm{c}}(t),
\end{equation*}
with $g_{\mathrm{c}}(0)=0$. This gives that
\begin{equation*}
\left\{
\begin{aligned}
&\n{g_{\mathrm{ex}}(0)}_{L_{v}^{2,r_{0}}H_{x}^{s_{0}}}=\n{f_{\mathrm{r}}(t_{0})}_{L_{v}^{2,r_{0}}H_{x}^{s_{0}}}=1,\\
&\n{ g_{\mathrm{c}}(t)}_{L^{\infty}([T_{*},0];Z)}\lesssim M^{-\frac{1}{2}}.
\end{aligned}
\right.
\end{equation*}

Now, we have two solutions with the decompositions
\begin{equation*}
\left\{
\begin{aligned}
f_{\mathrm{ex}}(t)=&f_{\mathrm{r}}(t)+f_{\mathrm{b}}(t)+f_{\mathrm{c}}(t),\\
g_{\mathrm{ex}}(t)=&f_{\mathrm{r}}(t_{0})+g_{c}(t),
\end{aligned}
\right.
\end{equation*}
which gives
\begin{equation*}
f_{\mathrm{ex}}(t)-g_{\mathrm{ex}}(t)=(f_{\mathrm{r}}(t)-f_{\mathrm{r}}(t_{0}))+f_{
\mathrm{b}}(t)+f_{\mathrm{c}}(t)-g_{\mathrm{c}}(t).
\end{equation*}%
For $t\in [T_{*},0]$, by Lemma \ref{lemma:Hs,bounds on fb} and Proposition \ref{lemma:perturbation},
we have
\begin{align*}
&\Vert f_{\mathrm{b}}(t)\Vert _{L_{v}^{2,r_{0}}H_{x}^{s_{0}}}\lesssim
M^{s_{0}-s}= \frac{1}{\ln \ln M},\\
&\Vert f_{\mathrm{c}}(t)\Vert _{L_{v}^{2,r_{0}}H_{x}^{s_{0}}}\leq \Vert f_{
\mathrm{c}}(t)\Vert _{Z}\lesssim M^{-\frac{1}{2}},\\
&\n{g_{\mathrm{c}}(t)}_{L_{v}^{2,r_{0}}H_{x}^{s_{0}}}\leq \n{g_{\mathrm{c}}(t)}_{Z}\lesssim M^{-\frac{1}{2}}.
\end{align*}
Thus, we obtain
\begin{equation*}
\Vert f_{\mathrm{ex}}(t_{0})-g_{\mathrm{ex}}(t_{0})\Vert
_{L_{v}^{2,r_{0}}H_{x}^{s_{0}}}\lesssim \frac{1}{\ln\ln M},
\end{equation*}
and
\begin{equation*}
\Vert f_{\mathrm{ex}}(0)-g_{\mathrm{ex}}(0)\Vert
_{L_{v}^{2,r_{0}}H_{x}^{s_{0}}}\sim \Vert f_{\mathrm{a}}(0)-f_{\mathrm{r}
}(t_{0})\Vert _{L_{v}^{2,r_{0}}H_{x}^{s_{0}}}\sim  \n{f_{\mathrm{r}
}(t_{0})}_{L_{v}^{2,r_{0}}H_{x}^{s_{0}}}=1,
\end{equation*}
where we have used that $\n{f_{\mathrm{a}}(0)}_{L_{v}^{2,r_{0}}H_{x}^{s_{0}}}\lesssim \frac{1}{\ln \ln M}$. Hence, we complete the proof.
\end{proof}
\begin{remark}
We actually have found an exact solution $f_{\mathrm{ex}}(t)$ which satisfies the norm deflation property. This is the key to conclude the failure of uniform continuity of the data-to-solution map.
\end{remark}

In the end, we prove Corollary \ref{lemma:ill-posedness,kernels}.
\begin{proof}[\textbf{Proof of Corollary $\ref{lemma:ill-posedness,kernels}$}]
Recall the kernel
\begin{align}\label{equ:kernels}
B(u-v,\omega)=\lrs{1_{\lr{|u-v|\leq 1}}|u-v|+1_{\lr{|u-v|\geq 1}}|u-v|^{-1}} \textbf{b}(\frac{u-v}{|u-v|}\cdot \omega),
\end{align}
and notice the pointwise upper bound estimate
\begin{align}\label{equ:pointwise upper bound,kernels}
\lrs{1_{\lr{|u-v|\leq 1}}|u-v|+1_{\lr{|u-v|\geq 1}}|u-v|^{-1}} \textbf{b}(\frac{u-v}{|u-v|}\cdot \omega)\leq \frac{1}{|u-v|} \textbf{b}(\frac{u-v}{|u-v|}\cdot \omega).
\end{align}
Therefore, for the kernel $B(u-v,\omega)$ in \eqref{equ:kernels}, all the same upper bound estimates on $f_{\mathrm{b}}$, $f_{\mathrm{r}}$, $f_{\mathrm{a}}$, $F_{\mathrm{err}}$, and $f_{\mathrm{c}}$ follow from the pointwise upper bound estimate \eqref{equ:pointwise upper bound,kernels}.
The only one lower bound on $f_{\mathrm{r}}$ we need is given in Remark \ref{remark:lower bound,kernels}. Then by repeating the proof of ill-posedness for the endpoint case $(d,\ga,r_{0})=(3,-1,0)$ in Theorem \ref{thm:main theorem}, we complete the proof of Corollary \ref{lemma:ill-posedness,kernels}.
\end{proof}

\appendix
\section{Sobolev-type and Time-independent Bilinear Estimates}\label{section:Sobolev-type and Time-independent Bilinear Estimates}

\begin{lemma}[Fractional Leibniz rule, {\cite{gulisashvili1996exact}}]\label{lemma:generalized leibniz rule}
Suppose $1<r<\infty$, $s\geq 0$ and $\frac{1}{r}=\frac{1}{p_{i}}+\frac{1}{q_{i}}$ with $i=1,2$, $1<q_{1}\leq \infty$, $1<p_{2}\leq \infty$.
Then
\begin{align}
\n{\lra{\nabla_{x}}^{s}(fg)}_{L^{r}}\leq C\n{\lra{\nabla_{x}}^{s}f}_{L^{p_{1}}}\n{g}_{L^{q_{1}}}+\n{f}_{L^{p_{2}}}
\n{\lra{\nabla_{x}}^{s}g}_{L^{q_{2}}}
\end{align}
where the constant $C$ depends on all of the parameters.
\end{lemma}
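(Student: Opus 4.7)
The plan is to prove the fractional Leibniz rule via the Bony paraproduct decomposition combined with Littlewood-Paley analysis, which is the standard route for such Kato-Ponce type inequalities. First I would decompose $f = \sum_{j} P_j f$ and $g = \sum_{k} P_k g$ into dyadic Littlewood-Paley pieces, and split the product according to the three frequency regimes:
\begin{align*}
fg = \pi_{\ell h}(f,g) + \pi_{h \ell}(f,g) + \pi_{hh}(f,g),
\end{align*}
where $\pi_{\ell h}(f,g) = \sum_{j \leq k-5} P_j f \, P_k g$ (low-high), $\pi_{h\ell}(f,g) = \sum_{k \leq j-5} P_j f \, P_k g$ (high-low), and $\pi_{hh}(f,g) = \sum_{|j-k| \leq 4} P_j f \, P_k g$ (high-high). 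The operator $\lra{\nabla_x}^s$ interacts cleanly with each piece because of essentially disjoint frequency support.

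For the low-high piece, since $P_j f \, P_k g$ has Fourier support in an annulus of size $\sim 2^k$ when $j \leq k-5$, one has $\lra{\nabla_x}^s \pi_{\ell h}(f,g) \sim \sum_k 2^{ks} (S_{k-5} f)(P_k g)$, which by Littlewood-Paley square function characterization of $L^r$ and the pointwise bound $|S_{k-5} f| \lesssim Mf$ (the Hardy-Littlewood maximal function) reduces to
\begin{align*}
\|\pi_{\ell h}\|_{L^r} \lesssim \bbn{(Mf) \cdot \bigl(\sum_k |2^{ks} P_k g|^2\bigr)^{1/2}}_{L^r} \lesssim \|f\|_{L^{p_2}} \|\lra{\nabla_x}^s g\|_{L^{q_2}},
\end{align*}
via Hölder and the boundedness of $M$ on $L^{p_2}$ for $p_2 > 1$ (or trivially on $L^\infty$). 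The high-low piece is symmetric and yields the first term. The final, and most delicate, step is to dispose of the high-high remainder $\pi_{hh}$, whose frequencies nearly cancel and can be of arbitrarily low output frequency; I would use a further Littlewood-Paley localization $\lra{\nabla_x}^s \pi_{hh} = \sum_\ell \lra{\nabla_x}^s P_\ell \pi_{hh}$ and apply Bernstein's inequality to move the $\lra{\nabla}^s$ onto one of the high-frequency input factors, then dualize against the square function in the complementary Hölder exponent so that it can be absorbed into either the first or second term of the claimed bound.

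The main obstacle is the endpoint case where $q_1 = \infty$ or $p_2 = \infty$: here the maximal function argument still works via the trivial bound $\|Mf\|_{L^\infty} \lesssim \|f\|_{L^\infty}$, but one must be careful that no frequency projector is applied to the $L^\infty$ factor in a way that would require BMO estimates. This is the reason the restrictions $1 < q_1 \leq \infty$ and $1 < p_2 \leq \infty$ appear asymmetrically — the ``rough'' factor (carrying the derivative) sits in an $L^{p_1}$ or $L^{q_2}$ space with finite exponent, so its square function is controlled. Alternatively, one can defer entirely to the reference \cite{gulisashvili1996exact} or adapt the Coifman-Meyer multilinear multiplier theorem, which gives these bounds in one stroke once the symbol $\lra{\xi+\eta}^s$ is decomposed as a sum of smooth pieces each supported where one of $|\xi|$, $|\eta|$, or $|\xi+\eta|$ dominates.
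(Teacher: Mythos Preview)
The paper does not prove this lemma at all; it is stated in the appendix with a bare citation to \cite{gulisashvili1996exact} and used as a black box. Your paraproduct sketch is the standard modern route to Kato--Ponce type inequalities and is essentially correct, so there is nothing to compare against here --- you have supplied more than the paper does.
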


Next, we present the standard Hardy-Littlewood-Sobolev inequality, which is widely used in our various estimates for the soft potential case.
\begin{lemma}\label{lemma:hardy-littlewood-sobolev inequality,dual form}
Let $p>1$, $r>1$ and $-d<\gamma \leq 0$ with
$$\frac{1}{p}+\frac{1}{r}=2+\frac{\gamma}{d}.$$
Let $f\in L^{p}(\R^{d})$ and $h\in L^{r}(\R^{d})$,
then there exists a constant $C(d, \gamma, p)$, independent of $f$ and $h$, such that
\begin{align} \label{equ:hls,integral}
\int_{\mathbb{R}^d} \int_{\mathbb{R}^d} f(x)|x-y|^{\ga} h(y) \mathrm{d} x \mathrm{d} y\leq C(d, \gamma, p,r)\|f\|_p\|h\|_r.
\end{align}
In particular, for $p>1$, $q>1$ with
$$1+\frac{1}{q}+\frac{\gamma}{d}=\frac{1}{p},$$
we also have
\begin{align}\label{equ:hardy-littlewood-sobolev inequality}
\n{f*|\cdot|^{\gamma}}_{L^{q}}\leq C(d, \gamma, p,q)\n{f}_{L^{p}}.
\end{align}
\end{lemma}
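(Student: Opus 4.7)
The plan is to reduce the integral form \eqref{equ:hls,integral} to the convolution form \eqref{equ:hardy-littlewood-sobolev inequality} by duality, and then to establish \eqref{equ:hardy-littlewood-sobolev inequality} via Hedberg's pointwise bound against the Hardy--Littlewood maximal function.

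First I would write the double integral as $\int f(x)\,(|\cdot|^{\gamma}\ast h)(x)\,dx$ and apply H\"older with exponents $p$ and $p'$, which reduces \eqref{equ:hls,integral} to the claim $\||\cdot|^{\gamma}\ast h\|_{p'}\lesssim \|h\|_{r}$. A direct computation shows that the scaling relation $\frac{1}{p}+\frac{1}{r}=2+\frac{\gamma}{d}$ of \eqref{equ:hls,integral} translates exactly into $1+\frac{1}{p'}+\frac{\gamma}{d}=\frac{1}{r}$, which is the hypothesis of \eqref{equ:hardy-littlewood-sobolev inequality} with parameter identification $(p,q)\leftrightarrow (r,p')$. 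Thus \eqref{equ:hls,integral} reduces to \eqref{equ:hardy-littlewood-sobolev inequality}, and both $p>1$ and $r>1$ translate into the two strict inequalities $p'<\infty$ and $r>1$ required on the convolution side.

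For \eqref{equ:hardy-littlewood-sobolev inequality}, since $-d<\gamma<0$, set $\alpha=d+\gamma\in(0,d)$ and view $|\cdot|^{\gamma}\ast f$ as a multiple of the Riesz potential $I_{\alpha}f(x)=\int f(y)\,|x-y|^{\alpha-d}\,dy$. Split at a free parameter $R>0$:
\begin{equation*}
I_{\alpha}f(x)=\int_{|y|\leq R}\frac{f(x-y)}{|y|^{d-\alpha}}\,dy+\int_{|y|> R}\frac{f(x-y)}{|y|^{d-\alpha}}\,dy.
\end{equation*}
A dyadic decomposition of the inner ball into annuli $\{2^{-k-1}R<|y|\leq 2^{-k}R\}$ and comparison with spherical averages bounds the first term by $CR^{\alpha}Mf(x)$, where $M$ is the Hardy--Littlewood maximal operator; H\"older's inequality applied to the tail yields $CR^{\alpha-d/p}\|f\|_{p}$, which is finite precisely when $\alpha<d/p$, i.e.\ $q<\infty$. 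Balancing the two bounds by the optimal choice of $R$ delivers the pointwise estimate
\begin{equation*}
|I_{\alpha}f(x)|\lesssim \bigl(Mf(x)\bigr)^{1-\alpha p/d}\,\|f\|_{p}^{\alpha p/d}.
\end{equation*}
Raising to the $q$-th power with the algebraic identity $q(1-\alpha p/d)=p$, integrating in $x$, and invoking the $L^{p}$-boundedness of $M$ then gives $\|I_{\alpha}f\|_{q}\lesssim \|f\|_{p}$, which is \eqref{equ:hardy-littlewood-sobolev inequality}.

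The main obstacle is the endpoint behavior: the maximal inequality degenerates to only weak-type $(1,1)$ at $p=1$, and the tail estimate blows up as $\alpha\to d/p$, i.e.\ $q\to\infty$, so the strict conditions $p>1$ and $q>1$ cannot be relaxed through this route. If one prefers to avoid maximal-function machinery, an equivalent approach is Marcinkiewicz real interpolation: verify weak-type endpoint bounds for $I_{\alpha}$ via the layer-cake representation together with the distribution function of $|x|^{\alpha-d}$, and interpolate to recover the full open range of $(p,q)$; either path produces the same stated constant dependence $C(d,\gamma,p,r)$.
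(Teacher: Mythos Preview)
Your proof is correct. The paper itself does not supply a proof of this lemma: it is stated in the appendix as the ``standard Hardy--Littlewood--Sobolev inequality'' and simply quoted for later use, so there is nothing to compare against at the level of argument. Your route via Hedberg's pointwise maximal-function bound is one of the standard textbook proofs; the duality reduction from \eqref{equ:hls,integral} to \eqref{equ:hardy-littlewood-sobolev inequality} is also clean and correctly tracks the exponent constraints. One cosmetic remark: you write ``since $-d<\gamma<0$'' when the statement allows $\gamma\leq 0$, but under the hypotheses $p,r>1$ (resp.\ $p,q>1$) the case $\gamma=0$ is vacuous, so nothing is lost.
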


\begin{lemma}[Endpoint case]\label{lemma:endpoint estimate,hls}
Let $d\geq 2$, $-d<\ga\leq 0$, and $1 \leq p< \frac{d}{d+\ga}<q\leq \infty$. Then for $f \in L^p\left(\mathbb{R}^d\right) \cap L^q\left(\mathbb{R}^d\right)$,
it holds that
\begin{align}\label{equ:endpoint estimate,hls,L1}
\int  |x|^{\ga}|f(x)|dx \lesssim\n{f}_{L^{p}}^{\frac{\frac{q-1}{q}+\frac{\ga}{d}}{\frac{1}{p}-\frac{1}{q}}}
\n{f}_{L^{q}}^{\frac{-\frac{\ga}{d}-\frac{p-1}{p}}{\frac{1}{p}-\frac{1}{q}}}.
\end{align}
In particular, when $\ga=-1$, $p=1$, and $q>\frac{d}{d-1}$, we have
\begin{align}\label{equ:endpoint estimate,hls}
\bbn{\int  \frac{f(y)}{|x-y|}dy}_{L_{x}^{\infty}} \lesssim\n{f}_{L^{1}}^{1-\frac{1}{d\left(1-\frac{1}{q}\right)}}\n{f}_{L^{q}}^{\frac{1}{d\left(1-\frac{1}{q}\right)}}.
\end{align}

\end{lemma}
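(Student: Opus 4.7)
\smallskip

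\noindent \textbf{Proof proposal for Lemma \ref{lemma:endpoint estimate,hls}.} The plan is to execute a standard real-variable truncation: split $\mathbb{R}^d$ at scale $R>0$, apply H\"older on each piece against the correct Lebesgue norm, and then optimize in $R$. Specifically, I would write
\begin{equation*}
\int |x|^{\ga}|f(x)|\,dx = \int_{|x|\leq R}|x|^{\ga}|f(x)|\,dx + \int_{|x|> R}|x|^{\ga}|f(x)|\,dx =: \mathrm{I}(R) + \mathrm{II}(R),
\end{equation*}
and estimate $\mathrm{I}(R) \leq \|f\|_{L^q}\bigl(\int_{|x|\leq R}|x|^{\ga q'}dx\bigr)^{1/q'}$ together with $\mathrm{II}(R) \leq \|f\|_{L^p}\bigl(\int_{|x|> R}|x|^{\ga p'}dx\bigr)^{1/p'}$, where $p'$, $q'$ are the H\"older conjugates of $p$, $q$.

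The role of the hypothesis $p < d/(d+\ga) < q$ is precisely to guarantee $\ga q' + d > 0$ (so the inner integral converges and scales like $R^{\ga + d/q'}$) and $\ga p' + d < 0$ (so the outer integral converges and scales like $R^{\ga + d/p'}$). This yields
\begin{equation*}
\int |x|^{\ga}|f(x)|\,dx \lesssim \|f\|_{L^{q}}\,R^{\ga + d/q'} + \|f\|_{L^{p}}\,R^{\ga + d/p'}.
\end{equation*}
Since the first term is increasing and the second decreasing in $R$, I would balance them by choosing $R = (\|f\|_{L^p}/\|f\|_{L^q})^{1/(d(1/p-1/q))}$. A direct computation, using $d/q' = d - d/q$ and $d/p' = d - d/p$, then gives exponents $\frac{(q-1)/q + \ga/d}{1/p-1/q}$ on $\|f\|_{L^p}$ and $\frac{-\ga/d - (p-1)/p}{1/p-1/q}$ on $\|f\|_{L^q}$, matching \eqref{equ:endpoint estimate,hls,L1}.

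For the specialization \eqref{equ:endpoint estimate,hls}, I would simply apply \eqref{equ:endpoint estimate,hls,L1} with $\ga = -1$, $p=1$, $q > d/(d-1)$ to the translate $y \mapsto f(y+x)$, observing that $\|f(\cdot + x)\|_{L^r} = \|f\|_{L^r}$ for every $r$ and every $x \in \mathbb{R}^d$. Taking the supremum in $x$ yields the claimed bound on $\bigl\|\int f(y)/|x-y|\,dy\bigr\|_{L^\infty_x}$, with exponents $1 - \frac{1}{d(1-1/q)}$ and $\frac{1}{d(1-1/q)}$ obtained by substituting the chosen parameters into the formula from \eqref{equ:endpoint estimate,hls,L1}.

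There is no genuine obstacle here; the proof is an elementary interpolation argument. The only care required is bookkeeping of the exponents, in particular confirming that the algebraic manipulations $\gamma/d + (q-1)/q$ and $-\gamma/d - (p-1)/p$ indeed result from the optimization step, and that the strict inequalities on $p$ and $q$ are sharp for convergence of the two truncated integrals of $|x|^\gamma$.
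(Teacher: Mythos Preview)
Your proposal is correct and follows essentially the same argument as the paper: split the integral at a radius, apply H\"older with the $L^q$ norm on the inner region and the $L^p$ norm on the outer region, then optimize the radius. The paper's proof is identical up to notation (it uses $\eta$ in place of your $R$) and does not spell out the translation step for \eqref{equ:endpoint estimate,hls}, which you handle explicitly.
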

\begin{proof}
The endpoint case is also known. For completeness, we include a proof.
We split the integral into two parts and use H\"{o}lder inequality to get
\begin{align*}
\int_{\R^{d}}|x|^{\ga}|f(x)|dx\leq& \int_{|x|\leq \eta}|x|^{\ga}|f(x)|dx+\int_{|x|> \eta}|x|^{\ga}|f(x)|dx\\
\lesssim &\n{f}_{L^{q}}\eta^{\frac{d}{q'}+\ga}+\n{f}_{L^{p}}\eta^{\frac{d}{p'}+\ga},
\end{align*}
where $p'=\frac{p}{p-1}$ and $q'=\frac{q}{q-1}$. Optimizing the choice of $\eta$ gives the desired estimate that
\begin{align*}
\int_{\R^{d}}|x|^{\ga}|f(x)|dx \lesssim &\n{f}_{L^{p}}^{\frac{\frac{q-1}{q}+\frac{\ga}{d}}{\frac{1}{p}-\frac{1}{q}}}
\n{f}_{L^{q}}^{\frac{-\frac{\ga}{d}-\frac{p-1}{p}}{\frac{1}{p}-\frac{1}{q}}}.
\end{align*}

\end{proof}

The following parts focus on time-independent bilinear estimates for gain/loss terms.

\begin{lemma}[{\cite[Theorem 2, Corollary 9]{alonso2010convolution}}] \label{lemma:bilinear estimate,Q+}
Let $1<p, q, r<\infty$ and $-d<\gamma \leq 0$ with
$$\frac{1}{p}+\frac{1}{q}=1+\frac{\gamma}{d}+\frac{1}{r}.$$
Assume the collision kernel
$$
B\left(u-v,\omega\right)=\left|u-v\right|^\gamma \textbf{b}(\frac{u-v}{|u-v|}\cdot \omega),
$$
with $\textbf{b}(\frac{u-v}{|u-v|}\cdot \omega)$ satisfying Grad's angular cutoff assumption.
Then, it holds that
\begin{align}
&\left\|Q^{+}(f, g)\right\|_{L^{r}\left(\mathbb{R}^{d}\right)} \leq C\|f\|_{L^{p}\left(\mathbb{R}^{d}\right)}\|g\|_{L^{q}\left(\mathbb{R}^{d}\right)},\label{equ:bilinear estimate,Q+}\\
&\left\|Q^{-}(f, g)\right\|_{L^{r}\left(\mathbb{R}^{d}\right)} \leq C\|f\|_{L^{p}\left(\mathbb{R}^{d}\right)}\|g\|_{L^{q}\left(\mathbb{R}^{d}\right)},\quad p>r.\label{equ:bilinear estimate,Q-}
\end{align}
\end{lemma}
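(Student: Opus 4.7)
The plan is to handle the loss and gain terms separately, with Hardy-Littlewood-Sobolev (Lemma \ref{lemma:hardy-littlewood-sobolev inequality,dual form}) as the common workhorse that absorbs the soft-potential weight $|u-v|^{\ga}$.

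For the loss term, the angular cutoff on $\textbf{b}$ lets us collapse the $\omega$-integration into a constant, giving the product representation
\begin{equation*}
Q^{-}(f,g)(v)= \|\textbf{b}\|_{L^{1}(\mathbb{S}^{d-1})}\,f(v)\,(|\cdot|^{\ga}\ast g)(v).
\end{equation*}
H\"{o}lder's inequality with $\frac{1}{r}=\frac{1}{p}+\frac{1}{s}$ (which forces $p>r$ exactly as stated) followed by Hardy-Littlewood-Sobolev $\||\cdot|^{\ga}\ast g\|_{L^{s}}\lesssim \|g\|_{L^{q}}$, valid when $\frac{1}{s}=\frac{1}{q}-1-\frac{\ga}{d}$, eliminates $s$ to yield exactly $\frac{1}{p}+\frac{1}{q}=1+\frac{\ga}{d}+\frac{1}{r}$. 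The full open range $1<p,q,r<\infty$ is available because $\ga\leq 0$ keeps HLS in its sub-critical regime and all intermediate exponents strictly between $1$ and $\infty$.

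For the gain term, the plan is to use the same Cauchy-Schwarz-and-HLS template that already delivered Lemma \ref{lemma:Q+,bilinear estimate}. Splitting $|u-v|^{\ga}\,\textbf{b}=(|u-v|^{\ga/2}\textbf{b}^{1/2})\cdot(|u-v|^{\ga/2}\textbf{b}^{1/2})$ and applying Cauchy-Schwarz in $(u,\omega)$ yields pointwise
\begin{equation*}
|Q^{+}(f,g)(v)|^{2}\leq \Bigl(\int f(v^{\ast})^{2}|u-v|^{\ga}\textbf{b}\,du\,d\omega\Bigr)\Bigl(\int g(u^{\ast})^{2}|u-v|^{\ga}\textbf{b}\,du\,d\omega\Bigr).
\end{equation*}
Carrying out the pre-/post-collision change of variables inside each factor and invoking the Maxwellian-kernel ($\ga=0$) sphere-average estimate of \cite[Theorem 1]{alonso2010estimates} (as used in \eqref{equ:Q+,bilinear estimate,constant kernel}) reduces each factor to a convolution of $f^{2}$, $g^{2}$ with $|\cdot|^{\ga}$. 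A final Hardy-Littlewood-Sobolev step distributes the radial loss between the two arguments; optimising this distribution in the conjugate exponents of $f^{2}$ and $g^{2}$ produces the scaling relation $\frac{1}{p}+\frac{1}{q}=1+\frac{\ga}{d}+\frac{1}{r}$.

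The main obstacle will be the gain-term exponent bookkeeping. Cauchy-Schwarz doubles the integrability of $f$ and $g$, while the Maxwellian bilinear bound is sharp in angular scaling but rigid in how it redistributes mass, so one has to interpolate carefully in order to free up an $L^{q}$-norm on $g$ without over-costing the $L^{p}$-norm on $f$; this is where the asymmetric ($p\neq q$) freedom of the Alonso-Carneiro-Gamba statement enters. The remaining difficulty, keeping the argument valid up to the boundary $\ga=-d$ and up to the Sobolev endpoints, is handled by the open-range hypothesis $1<p,q,r<\infty$ together with $-d<\ga\leq 0$, which ensures every HLS invocation in the chain is in its strong form and never at an excluded endpoint.
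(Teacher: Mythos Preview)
The paper does not supply a proof of this lemma; it appears in the appendix as a direct citation of \cite{alonso2010convolution}. So there is no in-paper argument to compare against, and your sketch must stand on its own.

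Your loss-term argument is correct. After the $\omega$-integration collapses, $Q^{-}(f,g)(v)=\|\textbf{b}\|_{L^{1}(\mathbb{S}^{d-1})}f(v)\,(|\cdot|^{\gamma}\ast g)(v)$, and H\"older with $\frac{1}{r}=\frac{1}{p}+\frac{1}{s}$ (forcing $p>r$) followed by Hardy--Littlewood--Sobolev on the convolution gives exactly the stated exponent relation.

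Your gain-term argument has a genuine gap: the Cauchy--Schwarz factors are infinite. After the split you write
\[
|Q^{+}(f,g)(v)|^{2}\leq A(v)B(v),\qquad A(v)=\int f(v^{\ast})^{2}|u-v|^{\gamma}\textbf{b}\,du\,d\omega,\quad B(v)=\int g(u^{\ast})^{2}|u-v|^{\gamma}\textbf{b}\,du\,d\omega.
\]
In $B(v)$, fix $\omega$ and write $u-v=t\omega+w$ with $w\in\omega^{\perp}$; then $u^{\ast}=v+w$ is independent of $t$, and the residual one-dimensional integral $\int_{\mathbb{R}}(t^{2}+|w|^{2})^{\gamma/2}\,\textbf{b}\bigl(t/\sqrt{t^{2}+|w|^{2}}\bigr)\,dt$ behaves like $\int|t|^{\gamma}\,dt$ as $|t|\to\infty$ (since $\cos\theta\to\pm1$ there) and diverges for every $\gamma\geq-1$. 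For $A(v)$ the situation is worse: for fixed $(v,\omega)$ the map $u\mapsto v^{\ast}$ has rank one, so a full $(d-1)$-dimensional hyperplane of $u$'s contributes to each value of $v^{\ast}$, and the hyperplane integral of $|u-v|^{\gamma}\textbf{b}$ diverges for $\gamma\geq 2-d$. In the paper's range $d=2,3$, $\tfrac{1-d}{2}\leq\gamma\leq0$, at least one of $A,B$ is always infinite, so the bound is vacuous.

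The structural reason is that $u\mapsto v^{\ast}$ and $u\mapsto u^{\ast}$ are complementary rank-deficient projections; separating $f(v^{\ast})$ from $g(u^{\ast})$ by Cauchy--Schwarz discards precisely the constraint that keeps the joint integral finite. The Fourier-side Cauchy--Schwarz in Lemma~\ref{lemma:Q+,bilinear estimate} avoids this but is tied to $r=2$. The general-$r$ proof in \cite{alonso2010convolution} proceeds instead through a dual formulation with angular averaging and radial rearrangement, and does not reduce to HLS-plus-Cauchy--Schwarz.
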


\begin{lemma}[$L^{1}$ endpoint estimate for $Q^{+}$]\label{lemma:endpoint estimate,Q+}
For $\ga=-1$, we have
\begin{align}\label{equ:endpoint estimate,Q+,f}
\n{Q^{+}(f,g)}_{L^{1}}
\leq& \n{f}_{L^{1}}\n{g}_{L^1}^{1-\frac{1}{d\left(1-\frac{1}{p}\right)}}\n{g}_{L^p}^{\frac{1}{d\left(1-\frac{1}{p}\right)}},
\end{align}
\begin{align}\label{equ:endpoint estimate,Q+,g}
\n{Q^{+}(f,g)}_{L^{1}}
\leq& \|f\|_{L^1}^{1-\frac{1}{d\left(1-\frac{1}{p}\right)}}\|f\|_{L^p}^{\frac{1}{d\left(1-\frac{1}{p}\right)}}\n{g}_{L^{1}}.
\end{align}
\end{lemma}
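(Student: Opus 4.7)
The plan is to exploit the pre/post-collision change of variables that gives the well-known identity $\int Q^+(f,g)\,dv = \int Q^-(f,g)\,dv$ for nonnegative $f,g$, and then to apply the endpoint Hardy-Littlewood-Sobolev estimate \eqref{equ:endpoint estimate,hls} from Lemma \ref{lemma:endpoint estimate,hls}. This reduces the $L^1_v$ estimate on the gain term, which has the complicated collision geometry baked into the arguments $(v^*,u^*)$, to a clean time-independent convolution-type inequality.

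First I would reduce to the nonnegative case by the pointwise bound $|Q^+(f,g)| \leq Q^+(|f|,|g|)$, so that it suffices to prove both inequalities for $f,g \geq 0$. Then, using Fubini together with the pre/post-collision involution $(v,u) \mapsto (v^*,u^*)$ (which is measure-preserving for each fixed $\omega$ and satisfies $|u^*-v^*|=|u-v|$), I would establish
\begin{equation*}
\int Q^+(f,g)(v)\,dv = \int\!\!\int\!\!\int_{\mathbb{S}^{d-1}} f(v)g(u)B(u-v,\omega)\,d\omega\,du\,dv.
\end{equation*}
The angular integral $\int_{\mathbb{S}^{d-1}} \mathbf{b}(\cos\theta)\,d\omega$ is finite by Grad's cutoff, so this collapses to
\begin{equation*}
\|Q^+(f,g)\|_{L^1} \lesssim \int\!\!\int \frac{f(v)g(u)}{|u-v|}\,du\,dv.
\end{equation*}

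Next, I would write the right-hand side as $\int f(v)\bigl(\int g(u)|u-v|^{-1}du\bigr)dv$ and apply estimate \eqref{equ:endpoint estimate,hls} to control the inner $u$-integral uniformly in $v$:
\begin{equation*}
\sup_v \int \frac{g(u)}{|u-v|}\,du \lesssim \|g\|_{L^1}^{1-\frac{1}{d(1-1/p)}}\|g\|_{L^p}^{\frac{1}{d(1-1/p)}}.
\end{equation*}
Pulling this supremum out and integrating the remaining $f$ in $v$ yields \eqref{equ:endpoint estimate,Q+,f}. For \eqref{equ:endpoint estimate,Q+,g}, I simply symmetrize: write the double integral as $\int g(u)\bigl(\int f(v)|u-v|^{-1}dv\bigr)du$ and apply \eqref{equ:endpoint estimate,hls} to the $v$-integral instead.

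There is no real obstacle here beyond a careful bookkeeping of the change of variables; the only subtle point is that the reflected angle satisfies $\cos\theta^* = -\cos\theta$, which does not affect Grad's cutoff bound. The essential content is transporting the singularity off the starred variables via the involution and then invoking the endpoint HLS estimate already established in Lemma \ref{lemma:endpoint estimate,hls}.
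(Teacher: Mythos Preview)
Your proposal is correct and follows essentially the same route as the paper: reduce to nonnegative $f,g$, use the pre/post-collision change of variables $(v,u)\mapsto(v^*,u^*)$ to rewrite $\|Q^+(f,g)\|_{L^1}$ as (a constant times) $\iint |f(v)||g(u)||u-v|^{-1}\,du\,dv$, then pull out one factor in $L^1$ and apply the endpoint estimate \eqref{equ:endpoint estimate,hls} to the remaining convolution. The paper's proof is slightly terser but identical in substance.
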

\begin{proof}
By the change of variable, we have
\begin{align*}
\n{Q^{+}(f,g)}_{L^{1}} \lesssim& \int_{\mathbb{S}^{d-1}} \int_{\R^{2d}}\frac{|f(u^{*})g(v^{*})|}{|u^{*}-v^{*}|} du dv d\omega\\
=&\int_{\mathbb{S}^{d-1}} \int_{\R^{2d}}\frac{|f(u^{*})g(v^{*})|}{|u^{*}-v^{*}|} du^{*} dv^{*} d\omega\\
\lesssim& \n{f}_{L^{1}}\bbn{\int \frac{|g(v^*)|}{|u^{*}-v^{*}|}dv^{*}}_{L^{\infty}}.
\end{align*}
Using the $L^{\infty}$ estimate \eqref{equ:endpoint estimate,hls}, we get
\begin{align*}
\n{Q^{+}(f,g)}_{L^{1}}\lesssim \n{f}_{L^{1}}\n{g}_{L^1}^{1-\frac{1}{d\left(1-\frac{1}{p}\right)}}\n{g}_{L^p}^{\frac{1}{d\left(1-\frac{1}{p}\right)}}.
\end{align*}
In the same way, we also obtain estimate \eqref{equ:endpoint estimate,Q+,g}.

\end{proof}

\section{Strichartz Estimates}\label{section:Strichartz Estimates}
Recall the abstract Strichartz estimates.
\begin{theorem}[{\cite[Theorem 1.2]{keel1998endpoint}}]\label{lemma:strichartz estimate,keel-tao}
Suppose that for each time
$t$ we have an operator $U(t)$ such that
\begin{align*}
\n{U(t)f}_{L_{x}^{2}}\lesssim& \n{f}_{L_{x}^{2}},\\
\n{U(t)(U(s)^{*})f}_{L_{x}^{\infty}}\lesssim& |t-s|^{-\sigma}\n{f}_{L_{x}^{1}}.
\end{align*}
Then it holds that
\begin{align}\label{equ:strichartz,usual one}
\n{U(t)f}_{L_{t}^{q}L_{x}^{p}}\lesssim \n{f}_{L_{x}^{2}},
\end{align}
for all sharp $\sigma$-admissible exponent pair that
\begin{align}
\frac{2}{q}+\frac{2\sigma}{p}=\sigma,\quad q\geq 2,\ \sigma>1.
\end{align}

\end{theorem}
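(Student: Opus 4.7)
The plan is to establish (\ref{equ:strichartz,usual one}) via the $TT^{*}$ method. By duality, the sought Strichartz estimate $\n{U(t)f}_{L^{q}_{t}L^{p}_{x}}\lesssim\n{f}_{L^{2}_{x}}$ is equivalent to the bilinear bound
\begin{equation*}
\Bigl| \iint \langle U(t)U(s)^{*}F(s),G(t)\rangle\,ds\,dt \Bigr| \lesssim \n{F}_{L^{q'}_{t}L^{p'}_{x}}\n{G}_{L^{q'}_{t}L^{p'}_{x}}.
\end{equation*}
The two hypotheses furnish $\n{U(t)U(s)^{*}}_{L^{2}\to L^{2}}\lesssim 1$ and $\n{U(t)U(s)^{*}}_{L^{1}\to L^{\infty}}\lesssim|t-s|^{-\sigma}$, so Riesz--Thorin interpolation between these two endpoints gives the fundamental pointwise-in-time operator bound
\begin{equation*}
\n{U(t)U(s)^{*}}_{L^{p'}_{x}\to L^{p}_{x}}\lesssim |t-s|^{-\sigma(1-2/p)},\qquad 2\leq p\leq\infty.
\end{equation*}

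For the \textbf{non-endpoint regime} $q>2$, I would plug the pointwise bound into the bilinear form and apply the Hardy--Littlewood--Sobolev inequality in the one-dimensional $t$-variable to the fractional-integral kernel $|t-s|^{-\sigma(1-2/p)}$. The scaling identity demanded by HLS simplifies exactly to the admissibility condition $2/q+2\sigma/p=\sigma$, and both $q'>1$ (since $q<\infty$) and $\sigma(1-2/p)\in(0,1)$ (since $p<2\sigma/(\sigma-1)$) place us strictly inside the HLS range. This immediately closes the argument off the endpoint.

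The \textbf{main obstacle} is the endpoint $q=2$, $p=2\sigma/(\sigma-1)$, finite because $\sigma>1$. At this pair the time kernel becomes the critical $|t-s|^{-1}$, HLS fails in its scalar form, and the naive $TT^{*}$ closure breaks down. My approach here is Keel--Tao's bilinear atomic argument: decompose dyadically in time $TT^{*}F=\sum_{j\in\mathbb{Z}}T_{j}F$, where $T_{j}$ restricts to $|t-s|\sim 2^{j}$, and on each dyadic block establish off-diagonal bilinear estimates against exponent pairs $(L^{a}_{x},L^{b}_{x})$ flanking the endpoint, by interpolating the dispersive bound with the $L^{2}\to L^{2}$ bound. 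A Whitney-type atomic decomposition of the time diagonal, combined with bilinear real interpolation in the Lorentz scale $L^{q,r}_{t}$, then sums the dyadic pieces to yield an estimate between Lorentz spaces $L^{2,1}_{t}L^{p'}_{x}\to L^{2,\infty}_{t}L^{p}_{x}$; one last real interpolation between two admissible pairs converts this to the $L^{2}=L^{2,2}$ bound at $q=2$. The Lorentz refinement is indispensable, since the dyadic $\ell^{2}_{j}$-summation forced upon us at the endpoint would diverge if one tried to work directly with $L^{2}_{t}$, and this delicate summation is precisely why the endpoint case is the hard part.
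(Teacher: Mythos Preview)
Your sketch is a correct outline of the Keel--Tao argument, but there is nothing to compare: the paper does not prove this theorem at all. It is stated in the appendix as a citation of \cite[Theorem 1.2]{keel1998endpoint} and used as a black box to derive the Strichartz estimate \eqref{equ:strichartz estimate,linear} for the hyperbolic Schr\"odinger propagator. No proof, not even a sketch, appears in the paper.
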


The symmetric hyperbolic Schr\"{o}dinger equation is
\begin{equation}
\left\{
\begin{aligned}
i\pa_{t}\phi+\nabla_{\xi}\cdot \nabla_{x}\phi=&0,\\
\phi(0)=&\phi_{0}.
\end{aligned}
\right.
\end{equation}
Note that the linear propagator $U(t)=e^{it\nabla_{\xi}\cdot \nabla_{x}}$ satisfies the energy and dispersive estimates
\begin{equation}
\begin{aligned}
&\n{e^{it\nabla_{\xi}\cdot\nabla_{x}}\phi_{0}}_{L_{x\xi}^{2}}\lesssim \n{\phi_{0}}_{L_{x\xi}^{2}},\\
&\n{e^{it\nabla_{\xi}\cdot\nabla_{x}}\phi_{0}}_{L_{x\xi}^{\infty}}\lesssim t^{-d}\n{\phi_{0}}_{L_{x\xi}^{1}}.
\end{aligned}
\end{equation}
Then by Theorem \ref{lemma:strichartz estimate,keel-tao}, this gives a Strichartz estimate that
\begin{align}\label{equ:strichartz estimate,linear}
\n{e^{it\nabla_{\xi}\cdot \nabla_{x}}\phi_{0}}_{L_{t}^{q}L_{x\xi}^{p}}\lesssim \n{\phi_{0}}_{L_{x\xi}^{2}},\quad \frac{2}{q}+\frac{2d}{p}=d,\quad q\geq 2,\ d\geq2.
\end{align}

\bibliographystyle{abbrv}
%\bibliographystyle{plain}
%\nocite{*}
\bibliography{references}

\end{document}